\documentclass[11pt]{article}
\usepackage[top=1in, bottom=1in, left=1in, right=1in]{geometry}
\usepackage{tikz}
\usetikzlibrary{shapes,arrows,matrix,patterns,positioning}
\usepackage{color}
\usepackage{graphicx}
\usepackage{algorithmic}
\usepackage{amssymb}
\usepackage{epstopdf}
\usepackage[ruled]{algorithm2e}
\usepackage{float}
\usepackage{amsmath,amsthm,bm,color,epsfig,enumerate,caption}
\usepackage{hyperref}
\usepackage{booktabs}
\usepackage{multirow}
\usepackage{array}



\newtheorem{theorem}{Theorem}[section]

\newtheorem{lemma}[theorem]{Lemma}

\renewcommand{\appendix}[1]{
\section*{Appendix: #1}
}

\renewcommand{\O}{O}

\newcommand{\bbC}{\mathbb{C}}
\newcommand{\bbR}{\mathbb{R}}
\newcommand{\rID}{{\it rID}}
\newcommand{\cID}{{\it cID}}

\makeatletter
\newcommand*{\extendadd}{
  \mathbin{
    \mathpalette\extend@add{}
  }
}
\newcommand*{\extend@add}[2]{
  \ooalign{
    $\m@th#1\leftrightarrow$%
    \vphantom{$\m@th#1\updownarrow$}
    \cr
    \hfil$\m@th#1\updownarrow$\hfil
  }
}
\makeatother

\begin{document}

\title{Interpolative Decomposition Butterfly Factorization}
\author{Qiyuan Pang \\ Tsinghua University, China\\  \href{mailto:ppangqqyz@foxmail.com}{ppangqqyz@foxmail.com} 
   \and Kenneth L. Ho \\ San Francisco, CA, USA\\ \href{mailto:klho@alumni.caltech.edu}{klho@alumni.caltech.edu}
     \and Haizhao Yang \\ Department of Mathematics\\ National University of Singapore, Singapore\\ \href{mailto:haizhao@nus.edu.sg}{haizhao@nus.edu.sg} }

\maketitle

\begin{abstract}
This paper introduces a ``kernel-independent'' interpolative decomposition butterfly factorization (IDBF) as a data-sparse
approximation for matrices that satisfy a complementary low-rank
property. The IDBF can be constructed in $O(N\log N)$ operations for an $N\times N$ matrix via hierarchical interpolative decompositions (IDs), if matrix entries can be sampled individually and each sample takes $O(1)$ operations. The resulting factorization is a product of
$O(\log N)$ sparse matrices, each with $O(N)$ non-zero entries. Hence,
it can be applied to a vector rapidly in $O(N\log N)$ operations. IDBF is a general framework for nearly optimal fast matvec useful in a wide range of applications, e.g., special function transformation, Fourier integral operators, high-frequency wave computation. Numerical
results are provided to demonstrate the effectiveness of the butterfly
factorization and its construction algorithms.
\end{abstract}

{\bf Keywords.} Data-sparse matrix, butterfly factorization, interpolative decomposition, operator compression, Fourier integral operators, special functions, high-frequency integral equations.

{\bf AMS subject classifications: 44A55, 65R10 and 65T50.}

\section{Introduction}
\label{sec:intro}

One of the key computational task in scientific computing is to
evaluate dense matrix-vector multiplication (matvec) rapidly. Given a dense matrix $K
\in \bbC^{N\times N}$ and a vector $x\in\bbC^N$, it takes $O(N^2)$ operations
to naively compute the vector $y=Kx\in \bbC^N$. There has been extensive research in constructing data-sparse representation of structured matrices (e.g., low-rank matrices \cite{Rec1,Rec3,ID2,mahoney2009cur}, $\mathcal{H}$ matrices \cite{HMat,HMatrix,Grasedyck2003}, $\mathcal{H}^2$ matrics \cite{HSS2,H2}, HSS matrices \cite{HSS1,HSSMatrix}, complementary low-rank matrices \cite{Butterfly1,Butterfly2,BF,IBF,MBF,MVBF}, FMM \cite{ROKHLINFMM1,ROKHLINFMM2,CHENGFMM,FMM3,FMM4,FMM5,FMM6,doi:10.1137/16M1095949}, directional low-rank matrices \cite{YingFFT,YingDirect,YingFMM,MESSNER20121175}, and the combination of these matrices \cite{HSSBF,LUBF}) aiming for linear or nearly linear scaling matvec. In particular, this paper concerns nearly optimal matvec for complementary low-rank matrices.

A wide range of transforms in harmonic analysis \cite{Butterfly2, BF,wavemoth,SHT,Jacobi,FIO09}, and integral equations in the high-frequency regime \cite{HSSBF,LUBF} admit a matrix or its submatrices 
satisfying a complementary low-rank property. For a {1D} complementary low-rank matrix, its rows are typically indexed by a point set {$X \subset \bbR$ and its columns by another point set $\Omega \subset \bbR$}. Associated with $X$ and $\Omega$ are two trees $T_{X}$ and $T_{\Omega}$ constructed by dyadic partitioning of each domain. Both trees have the same level $L +1= O(\log N)$, with the top root being the $1$-th level and the bottom leaf level being the $(L+1)$-th level. We say a matrix satisfies the complementary low-rank property if, {for} any node $A$ at level {$\ell$} in $T_{X}$ and any node $B$ at level {$L+2-\ell$}, the submatrix {$K_{A,B}^{\ell}$} of $K$, obtained by restricting the rows of $K$ to the points in node $A$ and the columns to the points in node $B$, {is} numerically low-rank; that is, given a precision $\epsilon$, there exists an approximation of {$K_{A,B}^{\ell}$} with the 2-norm of the error bounded by $\epsilon$ and the rank $k$
bounded by a polynomial in $\log N$ and $\log 1/\epsilon$.

Points in $X\times\Omega$ may be non-uniformly distributed. Hence, submatrices {$\{K_{A,B}^{\ell}\}_{A,B}$} at the same level $\ell$ may have different sizes but they have almost the same rank. If the point distribution is uniform, then at the $\ell$-th level starting from the root of $T_X$, submatrices have the same size $\frac{N}{2^{\ell-1}}\times 2^{\ell-1}$. See Figure {\ref{fig:complement}} for an illustration of low-rank submatrices in a {1D} complementary low-rank matrix of size $16 \times 16$ with uniform point distributions in $X$ and $\Omega$.  It is easy to generalize the complementary low-rank matrices to higher dimensional space as in \cite{MBF}. For simplicity, we only present the IDBF for the {1D} case with uniform point distributions and leave the extension for non-uniform point distributions and higher dimensional cases to the reader.

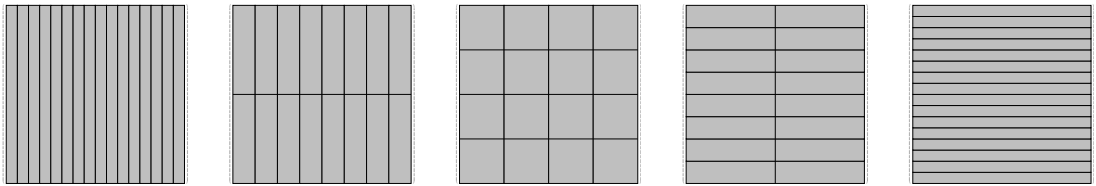
\begin{figure}[htp]
\begin{minipage}{\textwidth}
\centering
\resizebox{2.5cm}{!}{
\begin{tikzpicture}[baseline=-0.5ex]
      \tikzset{every left delimiter/.style={xshift=-1ex},every right delimiter/.style={xshift=1ex}}
      \matrix (mat) [matrix of math nodes, left delimiter=(, right delimiter=)] {
      \draw;
\foreach \j in {0,2,...,30}{
    \draw[fill=lightgray] (\j,32) rectangle (\j+2,0);
}
\\
      };
\end{tikzpicture}
}
\quad
\resizebox{2.5cm}{!}{
\begin{tikzpicture}[baseline=-0.5ex]
      \tikzset{every left delimiter/.style={xshift=-1ex},every right delimiter/.style={xshift=1ex}}
      \matrix (mat) [matrix of math nodes, left delimiter=(, right delimiter=)] {
      \draw;
\foreach \j in {0,4,...,28}{
    \draw[fill=lightgray] (\j,32) rectangle (\j+4,0);
}
\draw[fill=gray] (0,16) rectangle (32,16);
\\
      };
\end{tikzpicture}
}
\quad
\resizebox{2.5cm}{!}{
\begin{tikzpicture}[baseline=-0.5ex]
      \tikzset{every left delimiter/.style={xshift=-1ex},every right delimiter/.style={xshift=1ex}}
      \matrix (mat) [matrix of math nodes, left delimiter=(, right delimiter=)] {
      \draw;
\foreach \j in {0,8,...,24}{
    \draw[fill=lightgray] (\j,32) rectangle (\j+8,0);
}
\draw[fill=gray] (0,8) rectangle (32,8);
\draw[fill=gray] (0,16) rectangle (32,16);
\draw[fill=gray] (0,24) rectangle (32,24);
\\
      };
\end{tikzpicture}
}
\quad
\resizebox{2.5cm}{!}{
\begin{tikzpicture}[baseline=-0.5ex]
      \tikzset{every left delimiter/.style={xshift=-1ex},every right delimiter/.style={xshift=1ex}}
      \matrix (mat) [matrix of math nodes, left delimiter=(, right delimiter=)] {
      \draw;
\foreach \j in {0,4,...,28}{
    \draw[fill=lightgray] (32,\j) rectangle (0,\j+4);
}
\draw[fill=gray] (16,0) rectangle (16,32);
\\
      };
\end{tikzpicture}
}
\quad
\resizebox{2.5cm}{!}{
\begin{tikzpicture}[baseline=-0.5ex]
      \tikzset{every left delimiter/.style={xshift=-1ex},every right delimiter/.style={xshift=1ex}}
      \matrix (mat) [matrix of math nodes, left delimiter=(, right delimiter=)] {
      \draw;
\foreach \j in {0,2,...,30}{
    \draw[fill=lightgray] (32,\j) rectangle (0,\j+2);
}
\\
      };
\end{tikzpicture}
}
\\
\end{minipage}
\caption{Hierarchical decomposition of the row and column indices
    of a $16\times 16$ matrix. The dyadic trees $T_{X}$ and $T_{\Omega}$ have roots containing $16$ rows and $16$ columns respectively, and their leaves containing only a single row or column. The partition above indicates the complementary low-rank property of the matrix, and assumes that each submatrix is rank-$1$. }
\label{fig:complement}
\end{figure}

This paper introduces an {\bf Interpolative Decomposition Butterfly Factorization (IDBF)} as a data-sparse
approximation for matrices that satisfy the complementary low-rank
property. The IDBF can be constructed in $O(\frac{k^3}{n_0} N\log N)$ operations for an $N\times N$ matrix $K$ with a local rank parameter $k$ and a leaf size parameter $n_0$ via hierarchical linear interpolative decompositions (IDs), if matrix entries can be sampled individually and each sample takes $O(1)$ operations. The resulting factorization is a product of $O(\log N)$ sparse matrices, each of which contains $O(\frac{k^2}{n_0}N)$ nonzero entries as follows:
\begin{equation}
K \approx U^{L}U^{L-1}\cdots U^{h} S^h V^{h}\cdots V^{L-1}V^{L},
\end{equation}
where $h=L/2$ and the level $L$ is assumed to be even. Hence,
it can be applied to a vector rapidly in $O(\frac{k^2}{n_0}N\log N)$ operations. Previously, purely algebraic butterfly factorizations (in the sense that the complementary matrix is not the discretization of a kernel function $K(x,\xi)=a(x,\xi)e^{2\pi i \Phi(x,\xi)}$ with smooth $a(x,\xi)$ and $\Phi(x,\xi)$) have at least $O(N^{1.5})$ scaling \cite{Butterfly1,Butterfly2,BF,MBF}. The IDBF is the first {\bf purely algebraic butterfly factorization} (BF) with $O(N\log N)$ scaling in both factorization and application.

\section{Interpolative Decomposition Butterfly Factorization (IDBF)}

We will describe IDBF in detail in this section. For the sake of simplicity, we assume that $N=2^Ln_0$, where $L$ is an even integer, and $n_0=O(1)$ is the number of column or row indices in a leaf in the dyadic trees of row and column spaces, i.e., $T_{X}$ and $T_{\Omega}$, respectively. Let's briefly introduce the main ideas of designing $O(\frac{k^3}{n_0}N \log N)$ IDBF using a linear ID. In IDBF, we compute $O(\log N)$ levels of low-rank submatrix factorizations. At each level, according to the matrix partition by the dyadic trees in column and row (see Figure \ref{fig:complement} for an example), there are $\frac{N}{n_0}$ low-rank submatrices. Linear IDs only require $O(k^3)$ operations for each submatrix, and hence at most $O(\frac{k^3}{n_0} N)$ for each level of factorization, and $O(\frac{k^3}{n_0} N\log N)$ for the whole IDBF. There are two differences between IDBF and other BFs \cite{Butterfly1,Butterfly2,BF}.
\begin{enumerate}
\item The order of {factorization} is from the leaf-root and root-leaf levels of matrix partitioning (e.g., the left and right panels in Figure \ref{fig:complement}) and moves towards the middle level of matrix partitioning (e.g., the middle panel of Figure \ref{fig:complement}).
\item Linear IDs are organized in an appropriate way such that it is {cheap} in terms of both memory and operations to provide all necessary information for each level of {factorization}.
\end{enumerate}



{In what follows, uppercase} letters will generally denote matrices, while the lowercase letters $c$,
$p$, $q$, $r$, and $s$ denote ordered sets of indices. For a given index set $c$, its cardinality is written $|c|$. Given a matrix $A$, $A_{pq}$, $A_{p,q}$, or $A(p,q)$ is the submatrix with rows and columns restricted to
the index sets $p$ and $q$, respectively. We also use the notation $A_{:,q}$ to denote the submatrix with columns restricted to $q$. $s:t$ is an index set containing indices $\{s,s+1,s+2,\dots,t-1,t\}$.

\subsection{Linear scaling Interpolative Decompositions}
Interpolative decomposition and other low-rank decomposition techniques \cite{Rec1,ID2,Rec2} are important elements in modern scientific computing. These techniques usually require $O(kmn)$ arithmetic operations to get a rank $k=O(1)$ matrix factorization to approximate a matrix $A\in\bbC^{m\times n}$. Linear scaling randomized techniques can reduce the cost to {$O(k(m+n))$} \cite{RandomSample}. \cite{subCUR} further shows that in the CUR low-rank approximation $A\approx CUR$, where $C=A_{:,c}$, $R=A_{r,:}$, and $U\in\mathbb{C}^{k\times k}$ with $|c|=|r|=k$, if only $U$, $c$, and $r$ are needed, there exists an $O(k^3)$ algorithm for constructing $U$, $c$, and $r$.

In the construction of IDBF, we use an $O(nk^2)$ linear scaling column ID to construct $V$ and select skeleton indices $q$ such that $A\approx A_{:,q}V$ when $n\ll m$. Similarly, we can construct a row ID $A\approx {U}A_{q,:}$ in $O(mk^2)$ operations when $m\ll n$. As in \cite{RandomSample,subCUR}, randomized sampling can be applied to reduce the quadratic computational cost to linear. Here we present a simple lemma of interpolative decomposition (ID) to motivate the proposed linear scaling ID.

\begin{lemma}
\label{lem:1}
For a matrix $A \in \bbC^{m \times n}$ with rank ${k} \leq \min\{m,n\}$, there exists a partition of the column indices of $A$, $p \cup q$ with $|q| = {k}$, and a matrix $T \in \bbC^{{k \times (n-k)}}$, such that $A_{:,p} = A_{:,q}T$.
\end{lemma}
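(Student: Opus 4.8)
The plan is to read the statement as essentially a repackaging of the definition of rank. Since $\mathrm{rank}(A)=k$, the column space $\mathrm{col}(A)\subseteq\bbC^m$ has dimension exactly $k$. First I would select, from the $n$ columns of $A$, a subset of $k$ that is linearly independent: because $\dim\mathrm{col}(A)=k$, any maximal linearly independent subset of the columns has cardinality $k$, and such a subset exists (e.g.\ by a greedy sweep). Let $q$ be the index set of these $k$ columns and $p$ the complementary index set, so that $p\cup q$ partitions $\{1,\dots,n\}$, $|q|=k$, and $|p|=n-k$. By construction the submatrix $A_{:,q}\in\bbC^{m\times k}$ has full column rank $k$, hence its columns form a basis of $\mathrm{col}(A)$.

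Next, every column of $A$ lies in $\mathrm{col}(A)=\mathrm{span}\{\text{columns of }A_{:,q}\}$, so for each index $j\in p$ the column $A_{:,j}$ can be written as $A_{:,q}t_j$ for some $t_j\in\bbC^{k}$, and $t_j$ is unique since $A_{:,q}$ has full column rank. Collecting these vectors as the columns of $T:=[\,t_j\,]_{j\in p}\in\bbC^{k\times(n-k)}$ gives $A_{:,p}=A_{:,q}T$, which is exactly the claim.

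Alternatively, and in a way that also motivates the linear-scaling ID used later, I would run a column-pivoted QR factorization $A\Pi=QR$ with $\Pi$ a permutation, $Q\in\bbC^{m\times k}$ having orthonormal columns, and $R=[\,R_{11}\mid R_{12}\,]$ with $R_{11}\in\bbC^{k\times k}$. Because $\mathrm{rank}(A)=k$, pivoting forces $R_{11}$ to be invertible; taking $q$ to be the first $k$ pivot indices, $p$ the remaining indices, and $T:=R_{11}^{-1}R_{12}$, one gets $A_{:,q}=QR_{11}$ and $A_{:,p}=QR_{12}=(QR_{11})(R_{11}^{-1}R_{12})=A_{:,q}T$.

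There is no serious obstacle here: the only point needing a word of care is the \emph{partition} requirement, namely that $q$ be chosen so that $A_{:,q}$ actually has rank $k$ (not merely $|q|=k$), which is what makes the complementary columns expressible through $A_{:,q}$ and makes $T$ well defined and unique. In the QR-based argument this is precisely the step where one uses that column pivoting applied to a rank-$k$ matrix yields a nonsingular leading $k\times k$ block.
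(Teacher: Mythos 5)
Your proof is correct, and your second (QR-based) argument is essentially identical to the paper's own proof: the paper takes a rank-revealing QR factorization $A\Lambda = Q[R_1\ R_2]$ with $R_1\in\bbC^{k\times k}$ nonsingular, sets $A_{:,q}=QR_1$, and obtains $A_{:,p}=QR_2=A_{:,q}T$ with $T=R_1^{-1}R_2$, exactly as you do. Your first argument --- picking a maximal linearly independent set of columns as a basis of $\mathrm{col}(A)$ and expanding the remaining columns in it --- is a more elementary route to the same conclusion and even gives uniqueness of $T$; the paper prefers the QR formulation because it is constructive and directly motivates the $(\epsilon,k)$-cID algorithm that follows, where the pivoting, the triangular structure of $R_1$, and the truncation rule on its diagonal are reused. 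You also correctly flag the one point of substance, namely that pivoting on a rank-$k$ matrix forces $R_1$ to be invertible, which the paper asserts without comment.
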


\begin{proof}
A rank revealing QR decomposition of A gives
\begin{equation}
\label{eq:pivotedQR}
A\Lambda = QR = Q[R_{1} \ R_{2}],
\end{equation}
where $Q\in \bbC^{m\times {k}}$ is an orthogonal matrix, $R\in \bbC^{{k}\times n}$ is upper triangular, and $\Lambda \in \bbC^{n\times n}$ is a carefully chosen permutation matrix such that $R_{1}\in \bbC^{{k\times k}}$ is nonsingular. Let \begin{equation}
A_{:,q} = Q R_{1},
\end{equation}
and then 
\begin{equation}
A_{:,p} = Q R_{2} = Q R_{1}R_{1}^{-1}R_{2} = A_{:,q}T,
\end{equation}
where
\begin{equation}
\label{eqn:R1}
T = R_{1}^{-1}R_{2}.
\end{equation}
\end{proof}

$A_{:,p} = A_{:,q}T$ in Lemma \ref{lem:1} is equivalent to the traditional form of a column ID,
\begin{equation}
A = A_{:,q}[I \ T]\Lambda^*:=A_{:,q}V,
\end{equation}
where $^*$ denotes the conjugate transpose of a matrix. We call $p$ and $q$ as {\textit{redundant} and \textit{skeleton}} indices, respectively. $V$ can be understood as a {\it column interpolation matrix}. Our goal for linear scaling ID is to construct the skeleton index set $q$, the redundant index set $p$, $T$, and $\Lambda$ in $O({k}^2n)$ operations and $O({k}n)$ memory.

For a tall skinny matrix $A$, i.e., $m \gg n$, the rank revealing QR decomposition of A in \eqref{eq:pivotedQR} typically requires {$O(kmn)$} operations. To reduce the complexity to $O({k}^2n)$, we actually apply the rank revealing QR decomposition to $A_{s,:}$:
\begin{equation}
\label{eq:pivotedQR2}
A _{s,:}\Lambda = QR = Q[R_{1} \ R_{2}],
\end{equation}
 where $s$ is an index set {containing} $tk$ carefully selected rows of $A$, where $t$ is an oversampling parameter. These rows can be chosen independently and uniformly from the row space as {in} the sublinear CUR in \cite{subCUR} or the linear scaling algorithm in \cite{RandomSample};  or they can be chosen from the Mock-Chebyshev
grids of the row indices as in \cite{MVBF,Mock1,Mock2}. In fact, numerical results show that Mock-Chebyshev points lead to a more efficient and accurate ID than randomly sampled points when matrices are from physical systems. After the rank revealing QR decomposition, the other steps to generate $T$ and $\Lambda$ take only $O({k}^2n)$ operations since $R_1$ in \eqref{eqn:R1} is an upper triangular matrix.

{In practice, the true rank of $A$ is not available i.e., $k$ is unknown. In this case, the above computation procedure should be applied with some test rank $k\leq n$. Furthermore, we are often} interested in an ID with a numerical rank ${k_\epsilon}$ specified by an accuracy parameter $\epsilon$, i.e.
\begin{equation}
\label{eq:req}
\|A-A_{:,q}V\|_2\leq O(\epsilon)
\end{equation}
with $T\in \mathbb{C}^{{k_\epsilon\times (n-k_\epsilon)}}$ and $V\in \mathbb{C}^{{k_\epsilon}\times n}$. We can choose
\begin{equation}
\label{eqn:acc}
{k_\epsilon}=\min \{{k}: R_1({k,k})\leq \epsilon R_1(1,1)\},
\end{equation}
where $R_1$ is given by the rank-revealing QR factorization in \eqref{eq:pivotedQR2}. Then define
\begin{equation}
\label{eqn:R2}
T = (R_{1}(1:{k_\epsilon},1:{k_\epsilon}))^{-1}[R_1(1:{k_\epsilon},{k_\epsilon+1:k}) \ R_{2}(1:{k_\epsilon},:)]\in \mathbb{C}^{{k_\epsilon\times (n-k_\epsilon)}},
\end{equation}
and
\[
V = [I\ T]\Lambda^* \in  \mathbb{C}^{{k_\epsilon}\times n}.
\]
Correspondingly, let $q$ be the index set such that
\[
A_{:,q}=QR_1(1:{k_\epsilon},1:{k_\epsilon}),
\]
and $p$ be the complementary set of $q$, then $q$ and $V$ satisfy the requirement in \eqref{eq:req}. We refer {to} this linear scaling column ID with an accuracy tolerance $\epsilon$ and a rank parameter $k$ as {\it $(\epsilon,k)$-cID}.  For convenience, we will drop the term $(\epsilon,k)$ when it is not necessary to specify it.

 For a short and fat matrix $A\in \bbC^{m \times n}$ with  $m \ll n$, a similar row ID 
\begin{equation}
A \approx \Lambda [I \ T]^* A_{q,:}:=UA_{q,:}
\end{equation}
can be devised similarly with $O(k^2m)$ operations and $O(km)$ memory. We refer {to} this linear scaling row ID as {\it ${\epsilon}$-rID} and $U$ as the {\it row interpolation matrix}.

\subsection{Leaf-root complementary skeletonization (LRCS)}
\label{sec:lrskeleton}

For a complementary low-rank matrix $A$, we introduce the {\it leaf-root complementary skeletonization (LRCS)}
\[
A\approx USV
\]
via $\cID$s of the submatrices corresponding to the leaf-root levels of the column-row dyadic trees (e.g., see the associated matrix partition in Figure \ref{fig:partition-K} (right)), and $\rID$s of the submatrices corresponding to the root-leaf levels of the column-row dyadic trees (e.g., see the associated matrix partition in Figure \ref{fig:partition-K} (middle)). We always assume that IDs in this section {are} applied with a rank parameter $k=O(1)$. We'll not specify $k$ again in the following discussion.

Suppose that at the leaf level of the row (and column) dyadic trees, the row index set $r$ (and the column index set $c$) of $A$ are partitioned into leaves $\{r_i\}_{1\leq i\leq m}$ (and $\{c_i\}_{1\leq i\leq m}$) as follows 
\begin{equation}
\label{eq:partrc}
r = [r_{1},r_{2},\cdots,r_{m}] \qquad (\text{and } c = [c_{1},c_{2},\cdots,c_{m}]),
\end{equation}
with $|r_i|=n_0$ (and $|c_i|=n_0$) for all $1\leq i\leq m$, 
where $m = 2^{L}=\frac{N}{n_0}$, $L = \log_2 N - \log_2 n_0$, and $L+1$ is the depth of the dyadic trees $T_{X}$ (and $T_{\Omega}$). Figure \ref{fig:partition-K} shows an example of row and column dyadic trees with $m=16$. We apply $\rID$ to each $A_{r_{i},:}$ to obtain the row interpolation matrix in its ID and denote it as $U_{i}$;  the associated skeleton indices of the ID is denoted as $\hat{r}_{i}\subset r_{i}$. Let
\begin{equation}
\label{eq:partrc2}
\hat{r} = [\hat{r}_{1},\hat{r}_{2},\cdots,\hat{r}_{m}],
\end{equation}
then $A_{\hat{r},:}$ is the important skeleton of $A$ and we have
\[
A\approx \begin{pmatrix}
U_{1} & & & \\
& U_{2} & & \\
& & \ddots & \\
& & & U_{m} 
\end{pmatrix}
\begin{pmatrix}
A_{\hat{r}_{1},c_{1}} & A_{\hat{r}_{1},c_{2}} & \hdots & A_{\hat{r}_{1},c_{m}} \\
A_{\hat{r}_{2},c_{1}} & A_{\hat{r}_{2},c_{2}} & \hdots & A_{\hat{r}_{2},c_{m}} \\
\vdots & \vdots & \ddots & \vdots \\
A_{\hat{r}_{m},c_{1}} & A_{\hat{r}_{m},c_{2}} & \hdots & A_{\hat{r}_{m},c_{m}}  
\end{pmatrix}
:=UM.
\]

Similarly, $\cID$ is applied to each $A_{\hat{r},c_{j}}$ to obtain the column interpolation matrix $V_{j}$ and the skeleton indices $\hat{c}_{j}\subset c_{j}$ in its ID. Then finally we form the LRCS of $A$ as

\begin{equation}
\label{facA}
A\approx \begin{pmatrix}
U_{1} & & & \\
& U_{2} & & \\
& & \ddots & \\
& & & U_{m} 
\end{pmatrix}
\begin{pmatrix}
A_{\hat{r}_{1},\hat{c}_{1}} & A_{\hat{r}_{1},\hat{c}_{2}} & \hdots & A_{\hat{r}_{1},\hat{c}_{m}} \\
A_{\hat{r}_{2},\hat{c}_{1}} & A_{\hat{r}_{2},\hat{c}_{2}} & \hdots & A_{\hat{r}_{2},\hat{c}_{m}} \\
\vdots & \vdots & \ddots & \vdots \\
A_{\hat{r}_{m},\hat{c}_{1}} & A_{\hat{r}_{m},\hat{c}_{2}} & \hdots & A_{\hat{r}_{m},\hat{c}_{m}}  
\end{pmatrix}
\begin{pmatrix}
V_{1} & & & \\
& V_{2} & & \\
& & \ddots & \\
& & & V_{m} 
\end{pmatrix}:=USV.
\end{equation}
For a concrete example, Figure \ref{fig:rlfac} visualizes the non-zero pattern of the LRCS  in \eqref{facA} of the complementary low-rank matrix $A$ in Figure \ref{fig:partition-K}.

The novelty of the LRCS is that $M$ and $S$ are not computed explicitly; instead, they are generated and stored via the skeleton of row and column index sets. Hence, it only takes $O(\frac{k^3}{n_0} N)$ operations and $O(\frac{k^2}{n_0}N)$ memory to generate and store the factorization in \eqref{facA}, since there are $2m=\frac{2N}{n_0}$ IDs in total.

It is worth emphasizing that in the LRCS of a complementary matrix $A\approx USV$, the matrix $S$ is again a complementary matrix. The row (and column) dyadic tree $\hat{T}_X$ (and $\hat{T}_\Omega$) of $S$ is the compressed version of the row (and column) dyadic trees ${T}_X$ (and ${T}_\Omega$) of $A$. Figure \ref{fig:row1} (or \ref{fig:col1}) visualizes the relation of ${T}_X$ and $\hat{T}_X$ (or ${T}_\Omega$ and $\hat{T}_\Omega$) for the complementary matrix $A$ in Figure \ref{fig:partition-K}. $\hat{T}_X$ (or $\hat{T}_\Omega$) is not compressible at the leaf level of $T_X$ (or $T_\Omega$) but it is compressible if it is considered as a dyadic tree with one depth less (see Figure \ref{fig:rc} for an example of a new compressible dyadic tree with one depth less).

\begin{figure}[htp]
\begin{minipage}{\textwidth}
\centering
\resizebox{3.5cm}{!}{
\begin{tikzpicture}[baseline=-0.5ex]
      \tikzset{every left delimiter/.style={xshift=-1ex},every right delimiter/.style={xshift=1ex}}
      \matrix (mat) [matrix of math nodes, left delimiter=(, right delimiter=)] {
\draw[fill=gray] (32,0) rectangle (0,32);
\\
      };
\end{tikzpicture}
}
=
\resizebox{3.5cm}{!}{
\begin{tikzpicture}[baseline=-0.5ex]
      \tikzset{every left delimiter/.style={xshift=-1ex},every right delimiter/.style={xshift=1ex}}
      \matrix (mat) [matrix of math nodes, left delimiter=(, right delimiter=)] {
\draw;
\foreach \i in {0,2,...,30}{
   \draw[fill=gray] (0,32-\i) rectangle (32,32-\i-2);
}\\
      };
\end{tikzpicture}
}
=
\resizebox{3.5cm}{!}{
\begin{tikzpicture}[baseline=-0.5ex]
      \tikzset{every left delimiter/.style={xshift=-1ex},every right delimiter/.style={xshift=1ex}}
      \matrix (mat) [matrix of math nodes, left delimiter=(, right delimiter=)] {
\draw;
\foreach \i in {0,2,...,30}{
   \draw[fill=gray] (32-\i,0) rectangle (32-\i-2,32);
}\\
      };
\end{tikzpicture}
}\\
\end{minipage}
\caption{The left matrix is a complementary low-rank matrix. Assume that the depth of the dyadic trees of column and row spaces is $5$. The middle figure visualizes the root-leaf partitioning that divides the row index set into $16$ continuous subsets as $16$ leaves. The right one is for the leaf-root partitioning that divides the column index set into $16$ continuous subsets as $16$ leaves.}
\label{fig:partition-K}
\end{figure}
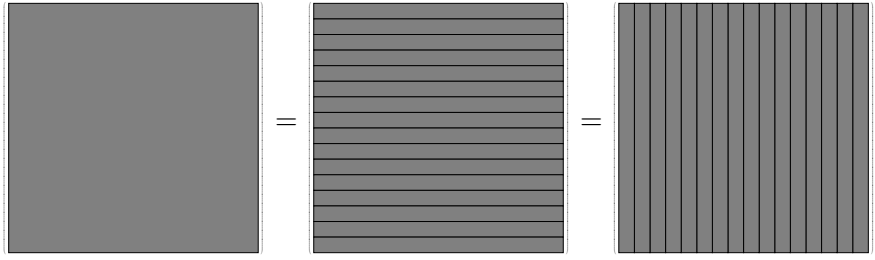

\begin{figure}[htp]
\begin{minipage}{\textwidth}
\centering
\resizebox{3.5cm}{!}{
\begin{tikzpicture}[baseline=-0.5ex]
      \tikzset{every left delimiter/.style={xshift=-1ex},every right delimiter/.style={xshift=1ex}}
      \matrix (mat) [matrix of math nodes, left delimiter=(, right delimiter=)] {
\draw[fill=gray] (32,0) rectangle (0,32);
\\
      };
\end{tikzpicture}
}
$\approx$
\resizebox{1.85cm}{!}{
\begin{tikzpicture}[baseline=-0.5ex]
      \tikzset{every left delimiter/.style={xshift=-1ex},every right delimiter/.style={xshift=1ex}}
      \matrix (mat) [matrix of math nodes, left delimiter=(, right delimiter=)] {
      \draw;
\foreach \i in {0,1,...,15}{
   \draw[fill=gray]  (\i,32-\i-\i) rectangle (\i+1,32-\i-\i-2);
}\\
      };
\end{tikzpicture}
}
\resizebox{1.85cm}{!}{
\begin{tikzpicture}[baseline=-0.5ex]
      \tikzset{every left delimiter/.style={xshift=-1ex},every right delimiter/.style={xshift=1ex}}
      \matrix (mat) [matrix of math nodes, left delimiter=(, right delimiter=)] {
      \draw;
\foreach \i in {0,1,...,15}{
\foreach \j in {0,1,...,15}{
      \draw[fill=gray]  (\i,16-\j) rectangle (\i+1,16-\j-1);
}
}\\
      };
\end{tikzpicture}
}
\resizebox{3.5cm}{!}{
\begin{tikzpicture}[baseline=-0.5ex]
      \tikzset{every left delimiter/.style={xshift=-1ex},every right delimiter/.style={xshift=1ex}}
      \matrix (mat) [matrix of math nodes, left delimiter=(, right delimiter=)] {
      \draw;
\foreach \i in {0,1,...,15}{
   \draw[fill=gray]  (32-\i-\i,\i) rectangle (32-\i-\i-2,\i+1);
}\\
      };
\end{tikzpicture}
}\\
\end{minipage}
\caption{An example of the LRCS in \eqref{facA} of the complementary low-rank matrix $A$ in Figure \ref{fig:partition-K}. Non-zero submatrices in \eqref{facA} are shown in gray areas.}
\label{fig:rlfac}
\end{figure}
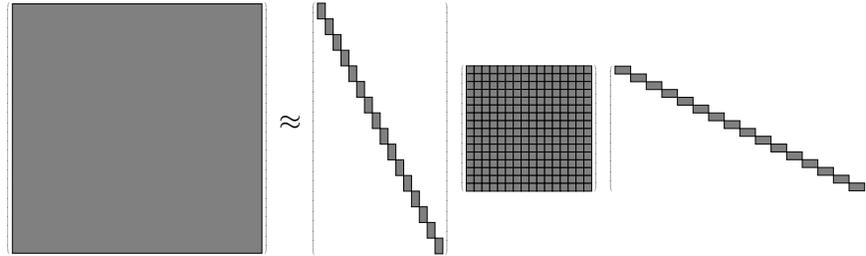

\begin{figure}[ht!]
  \begin{center}
    \begin{tabular}{cc}
      \includegraphics[height=1.7in]{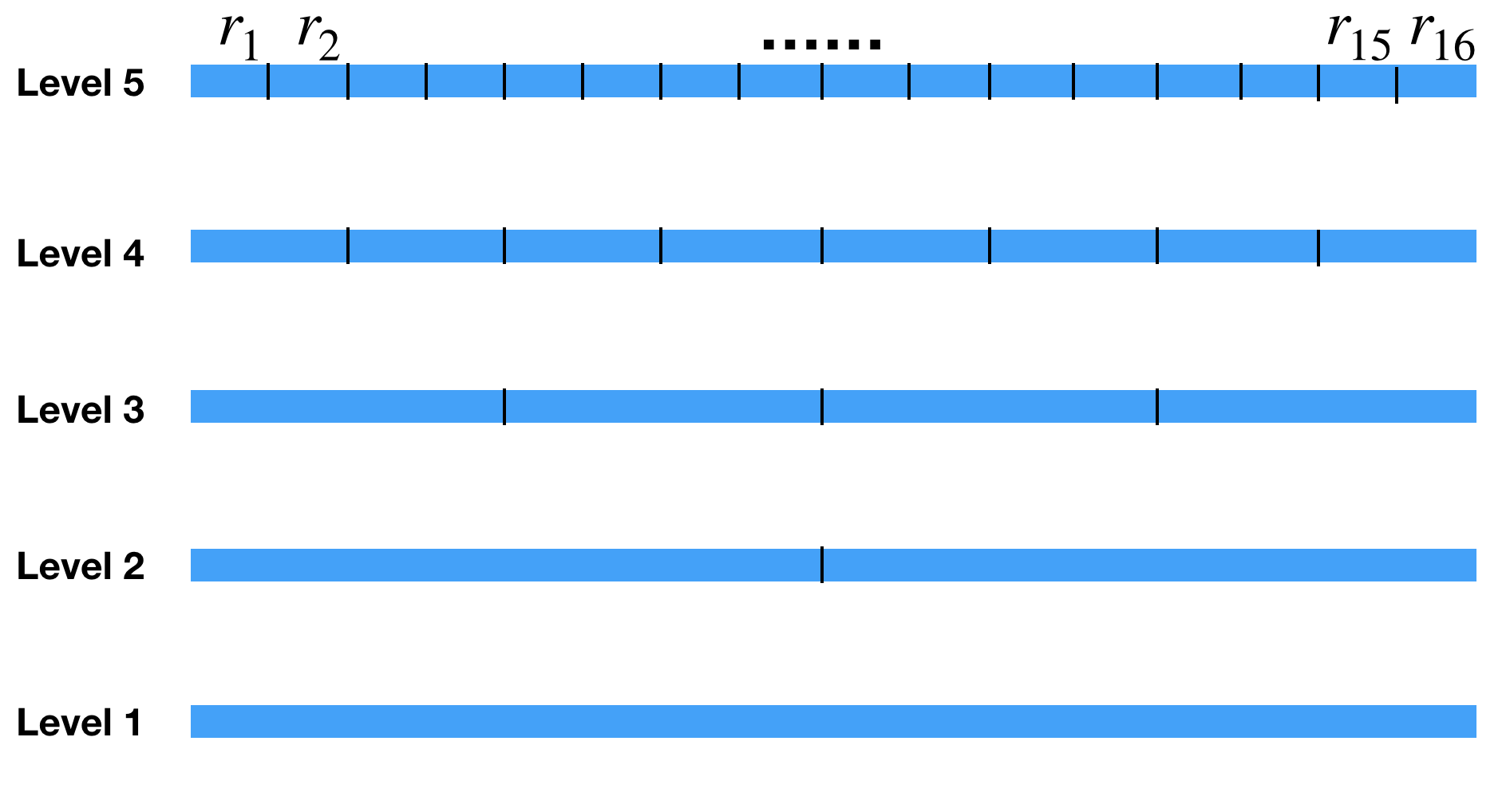}&
      \includegraphics[height=1.7in]{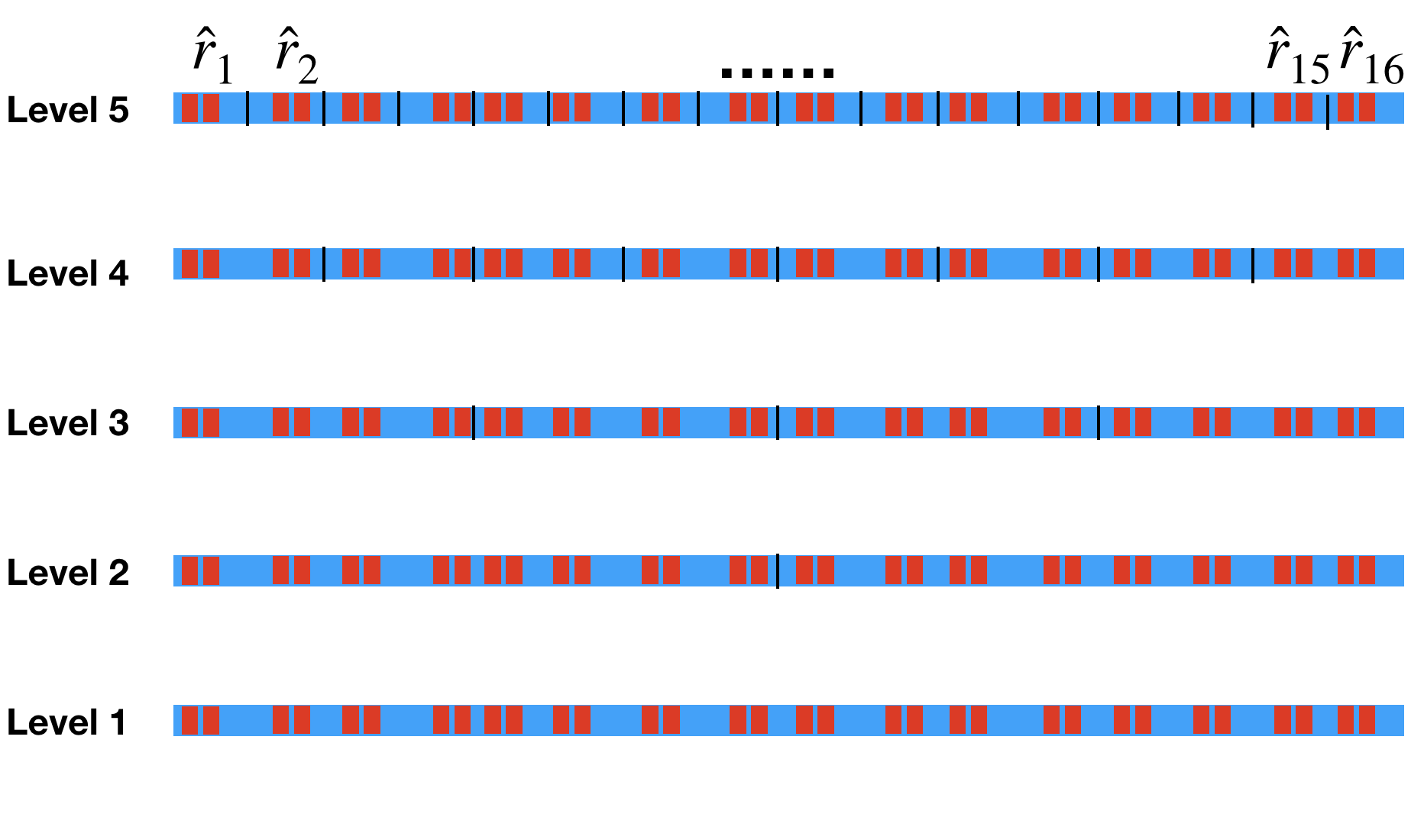}
    \end{tabular}
  \end{center}
\caption{Left: The dyadic tree $T_X$ of the row space with leaves $\{r_i\}_{1\leq i\leq 16}$ denoted as in \eqref{eq:partrc} for the example in Figure \ref{fig:partition-K}. Right: Selected important rows of $T_X$ naturally form a compressed dyadic tree in red with leaves $\{\hat{r}_i\}_{1\leq i\leq 16}$ denoted as in \eqref{eq:partrc2}. }
\label{fig:row1}

\end{figure}\begin{figure}[ht!]
  \begin{center}
    \begin{tabular}{cc}
      \includegraphics[height=1.7in]{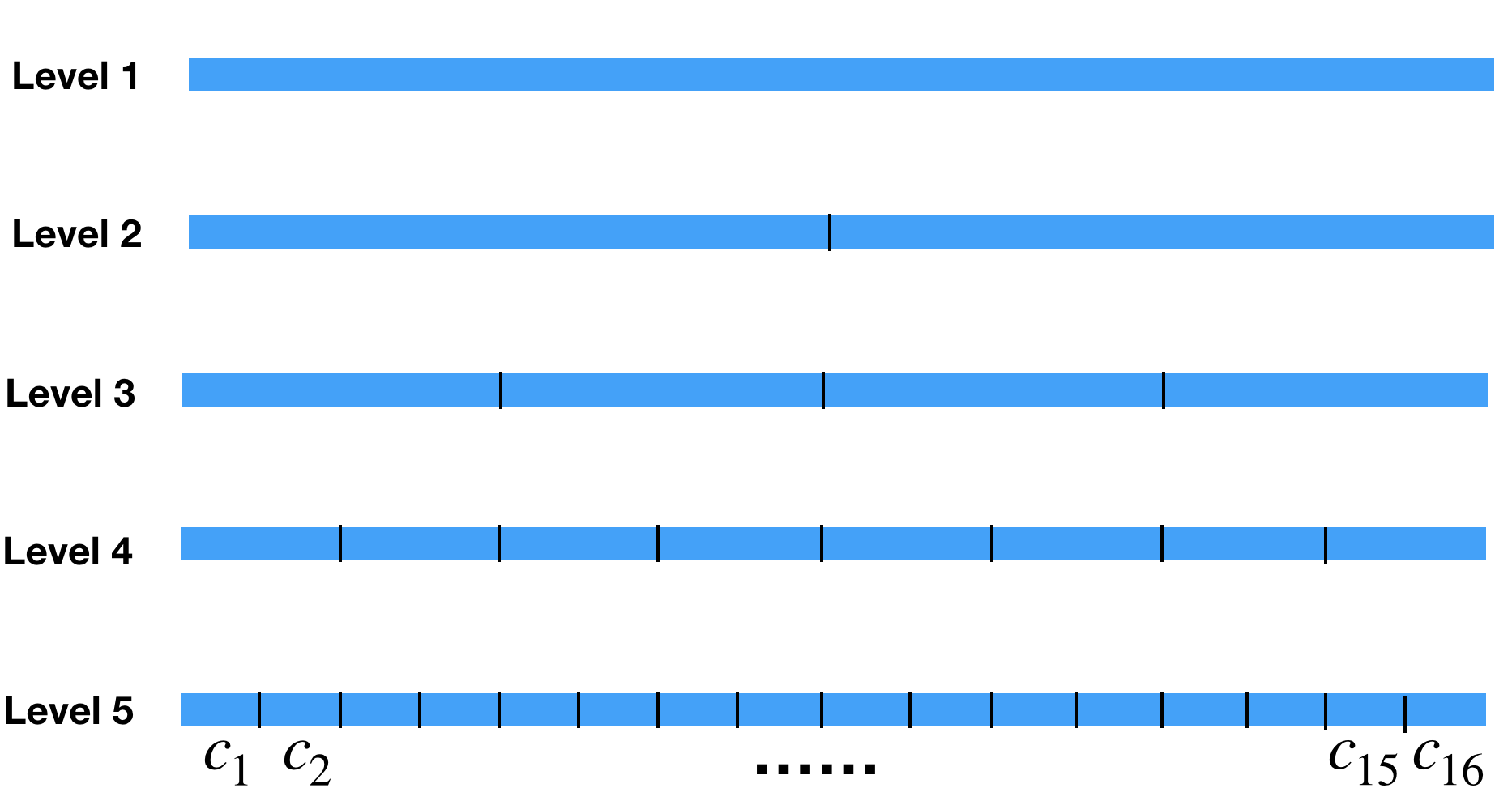}&
      \includegraphics[height=1.7in]{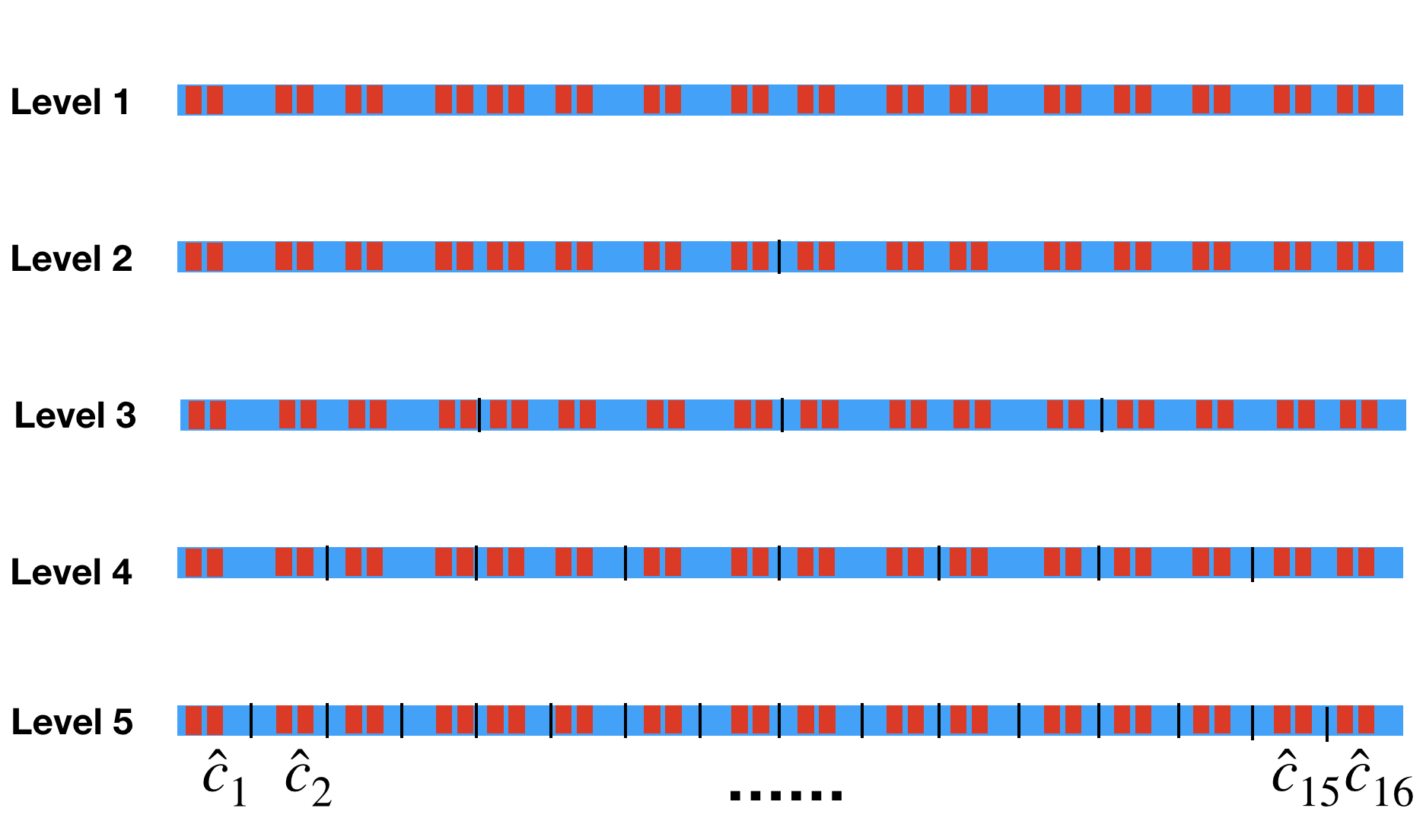}
    \end{tabular}
  \end{center}
\caption{Left: The dyadic tree $T_\Omega$ of the column space with leaves $\{c_i\}_{1\leq i\leq 16}$ denoted as in \eqref{eq:partrc} for the example in Figure \ref{fig:partition-K}. Right: Selected important columns of $T_\Omega$ naturally form a compressed dyadic tree in red with leaves $\{\hat{c}_i\}_{1\leq i\leq 16}$. }
\label{fig:col1}
\end{figure}

\begin{figure}[ht!]
  \begin{center}
    \begin{tabular}{cc}
      \includegraphics[height=1.7in]{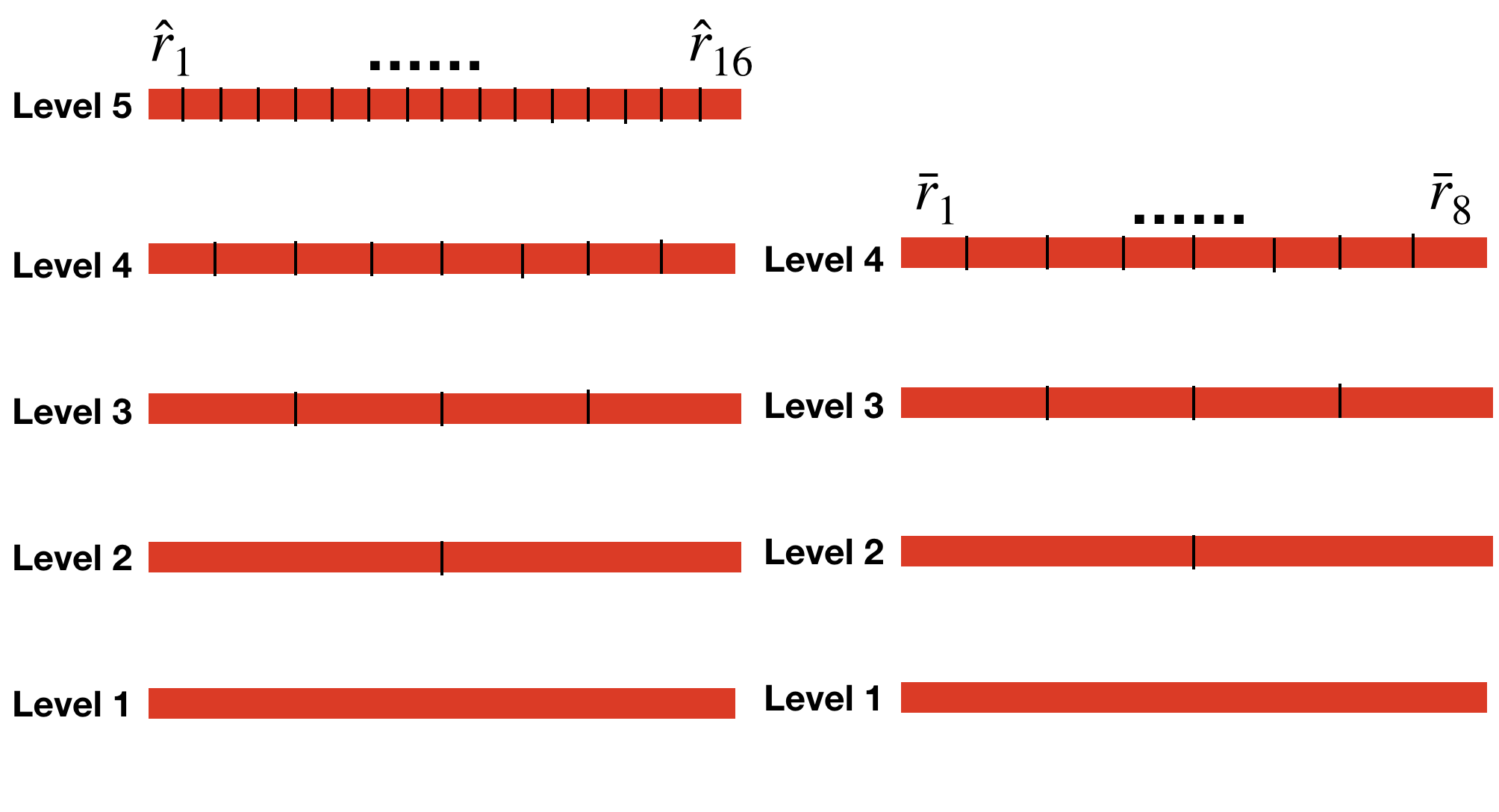}&
      \includegraphics[height=1.7in]{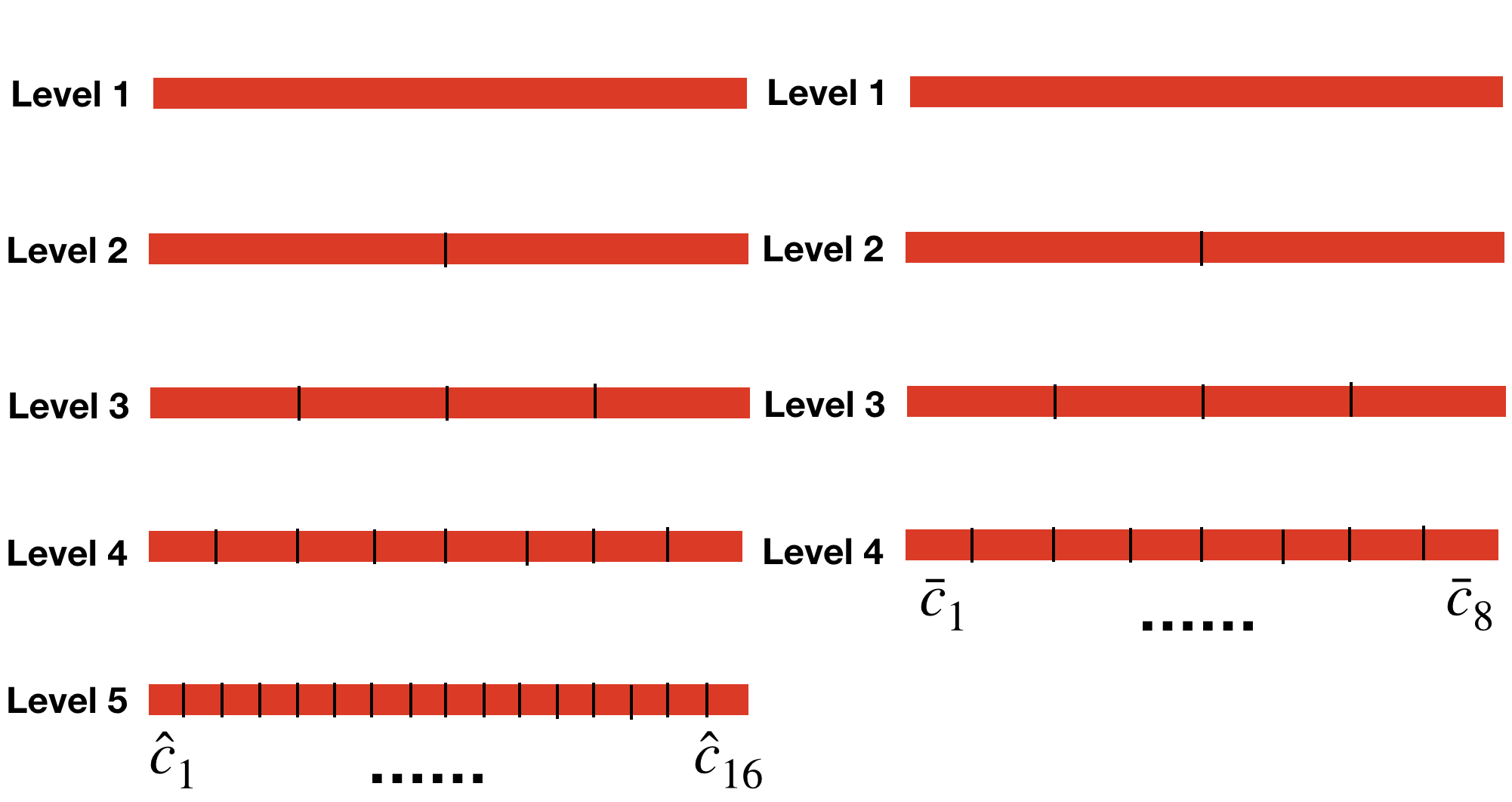}
    \end{tabular}
  \end{center}
\caption{Left: The compressed dyadic tree of $T_X$ of the row space in Figure \ref{fig:row1}. Level $5$ is not compressible. Middle left: Combining adjacent leaves at Level $5$, i.e., $\bar{r}_i=\hat{r}_{2i-1}\cup \hat{r}_{2i}$, forms a compressible dyadic tree with depth $4$. Middle right: the compressed dyadic tree of $T_\Omega$ of the column space in Figure \ref{fig:col1}. Level $5$ is not compressible. Right: Combining adjacent leaves at Level $5$, i.e., $\bar{c}_i=\hat{c}_{2i-1}\cup \hat{c}_{2i}$, forms a compressible dyadic tree with depth $4$.}
\label{fig:rc}
\end{figure}

\subsection{Matrix splitting with complementary skeletonization (MSCS)}
\label{sec:matrixsplit}

Here we describe another elementary idea of IDBF that is applied repeatedly: MSCS. A complementary low-rank matrix $A$ (with row and column dyadic trees $T_{X}$ and $T_{\Omega}$ of depth $L$ and with $m=2^L$ leaves) can be split into a $2\times 2$ block matrix
\begin{equation}
\label{eq:matrixsplit}
A = 
\begin{pmatrix}
A_{11} & A_{12} \\
A_{21} & A_{22}
\end{pmatrix}
\end{equation}
according to the nodes of the second level of the dyadic trees $T_X$ and $T_\Omega$ (those nodes right next to the root level). By the complementary low-rank property of $A$, we know that $A_{ij}$ is also complementary low-rank, for all $i$ and $j$, with row and column dyadic trees $T_{X,ij}$ and $T_{\Omega,ij}$ of depth $L-1$ and with $m/2$ leaves.

Suppose {$A_{ij} \approx U_{ij}S_{ij}V_{ij}$}, for $i,j = 1,2$, is the LRCS of $A_{ij}$.
Then $A$ can be factorized as {$A \approx USV$}, where
\begin{equation}
\label{eq:expressUSV}
\begin{split}
U &= 
\begin{pmatrix}
U_{11} & & U_{12} & \\
& U_{21} & & U_{22}
\end{pmatrix},\\
S &= \begin{pmatrix}
S_{11} & & & \\
& & S_{21} & \\
& S_{12} & & \\
& & & S_{22} 
\end{pmatrix},\\
V &= 
\begin{pmatrix}
V_{11} & \\
& V_{12} \\
V_{21} & \\
& V_{22}
\end{pmatrix}.
\end{split}
\end{equation}
The factorization in \eqref{eq:expressUSV} is referred as the {\it matrix splitting with complementary skeletonization (MSCS)} in this paper. Recall that the middle factor $S$ is not explicitly computed, resulting in a linear scaling algorithm for forming \eqref{eq:expressUSV}. Figure \ref{fig:exlfac} visualizes the MSCS of a complementary low-rank matrix $A$ with dyadic trees of depth $5$ and $16$ leaf nodes in Figure \ref{fig:partition-K}.

\begin{figure}[htp]
\begin{minipage}{\textwidth}
\centering
\resizebox{3.5cm}{!}{
\begin{tikzpicture}[baseline=-0.5ex]
      \tikzset{every left delimiter/.style={xshift=-1ex},every right delimiter/.style={xshift=1ex}}
      \matrix (mat) [matrix of math nodes, left delimiter=(, right delimiter=)] {
\draw[fill=gray] (32,0) rectangle (0,32);
\\
      };
\end{tikzpicture}
}
$\approx$
\resizebox{3.5cm}{!}{
\begin{tikzpicture}[baseline=-0.5ex]
      \tikzset{every left delimiter/.style={xshift=-1ex},every right delimiter/.style={xshift=1ex}}
      \matrix (mat) [matrix of math nodes, left delimiter=(, right delimiter=)] {
      \draw;
\foreach \j in {0,1,...,15}{
    \draw[fill=gray] (\j,32-\j-\j) rectangle (\j+1,32-\j-\j-2);
    \draw[fill=gray] (16+\j,32-\j-\j) rectangle (16+\j+1,32-\j-\j-2);
}
\\
      };
\end{tikzpicture}
}
\resizebox{3.5cm}{!}{
\begin{tikzpicture}[baseline=-0.5ex]
      \tikzset{every left delimiter/.style={xshift=-1ex},every right delimiter/.style={xshift=1ex}}
      \matrix (mat) [matrix of math nodes, left delimiter=(, right delimiter=)] {
      \draw;
      \draw[fill=gray] (0,32) rectangle (8,24);
      \draw[fill=gray] (16,16) rectangle (24,24); 
      \draw[fill=gray] (8,8) rectangle (16,16);  
      \draw[fill=gray] (24,0) rectangle (32,8); 
\\
      };
\end{tikzpicture}
}
\resizebox{3.5cm}{!}{
\begin{tikzpicture}[baseline=-0.5ex]
      \tikzset{every left delimiter/.style={xshift=-1ex},every right delimiter/.style={xshift=1ex}}
      \matrix (mat) [matrix of math nodes, left delimiter=(, right delimiter=)] {
      \draw;
\foreach \j in {0,1,...,15}{
    \draw[fill=gray] (\j+\j,32-\j) rectangle (\j+\j+2,32-\j-1);
    \draw[fill=gray] (\j+\j,32-\j-16) rectangle (\j+\j+2,32-\j-17);
}
\\
      };
\end{tikzpicture}
}\\
\end{minipage}
\caption{The visualization of a MSCS of a complementary low-rank matrix $A \approx USV$ with dyadic trees of depth $5$ and $16$ leaf nodes in Figure \ref{fig:partition-K}. Non-zero blocks in \eqref{eq:expressUSV} are shown in gray areas.}
\label{fig:exlfac}
\end{figure}
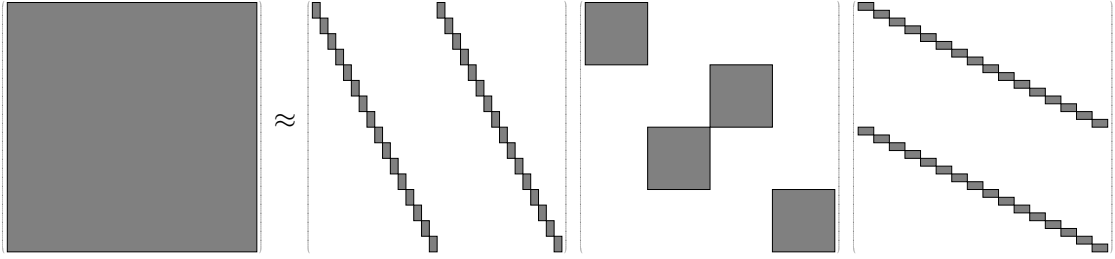

\subsection{Recursive MSCS}
\label{sec:recursivefac}
Now we apply MSCS recursively to get  the full
IDBF of a complementary low-rank matrix $A$ (with row and column dyadic trees $T_{X}$ and $T_{\Omega}$ of depth $L$ and with $m=2^L$ leaves). As in \eqref{eq:expressUSV}, suppose we have constructed the first level of MSCS and denote it as
\begin{equation}
\label{eq:leaffac}
{A \approx U^{L}S^{L}V^{L}}
\end{equation}
with
\begin{equation}
\label{eqn:USV2}
\begin{split}
U^{L} &= 
\begin{pmatrix}
U^{L}_{11} & & U^{L}_{12} & \\
& U^{L}_{21} & & U^{L}_{22}
\end{pmatrix},\\
S^{L} &= \begin{pmatrix}
S^{L}_{11} & & & \\
& & S^{L}_{21} & \\
& S^{L}_{12} & & \\
& & & S^{L}_{22} 
\end{pmatrix},\\
V^{L} &= 
\begin{pmatrix}
V^{L}_{11} & \\
& V^{L}_{12} \\
V^{L}_{21} & \\
& V^{L}_{22}
\end{pmatrix},
\end{split}
\end{equation}
as in \eqref{eq:expressUSV}.

Suppose that at the leaf level of the row and column dyadic trees, the row index set $r$ and the column index set $c$ of $A$ are partitioned into leaves $\{r_i\}_{1\leq i\leq m}$ and $\{c_i\}_{1\leq i\leq m}$ as in \eqref{eq:partrc}. By the $\rID$s and $\cID$s applied in the construction of \eqref{eq:leaffac}, we have obtained skeleton index sets $\hat{r}_i\subset r_i$ and $\hat{c}_i\subset c_i$. Then
\begin{equation}
S^{L}_{ij} = 
\begin{pmatrix}
A_{\hat{r}_{(i-1)m/2+1},\hat{c}_{(j-1)m/2+1}} & \cdots & A_{\hat{r}_{(i-1)m/2+1},\hat{c}_{jm/2}} \\
\vdots & \ddots & \vdots \\
A_{\hat{r}_{im/2},\hat{c}_{(j-1)m/2+1}} & \cdots & A_{\hat{r}_{im/2},\hat{c}_{jm/2}} \\
\end{pmatrix}
\end{equation}
for $i,j = 1,2$. 

As explained in Section \ref{sec:lrskeleton}, each non-zero block $S^L_{ij}$ in $S^{L}$  is a submatrix of $A_{ij}$ consisting of important rows and columns of $A_{ij}$ for $i,j=1,2$. Hence, $S^L_{ij}$ inherits the complementary low-rank property of $A_{ij}$ and is {itself} a complementary low-rank matrix. Suppose $T_{X,ij}$ and $T_{\Omega,ij}$ are the dyadic trees of the row and column spaces of $A_{ij}$ with $m/2$ {leaves} and $L-1$ depth, then according to Section \ref{sec:lrskeleton}, $S^L_{ij}$ has compressible row and column dyadic trees $\hat{T}_{X,ij}$ and $\hat{T}_{\Omega,ij}$ with $m/4$ {leaves} and $L-2$ depth.

Next, we apply MSCS to each $S^L_{ij}$ in a recursive way. In particular, we divide each $S^{L}_{ij}$ into a $2\times 2$ block matrix according to the nodes at the second level of its row and column dyadic trees:
 \begin{equation}
\label{eq:sl}
S^{L}_{ij} = 
\begin{pmatrix}
(S^{L}_{ij})_{11} & (S^{L}_{ij})_{12} \\
(S^{L}_{ij})_{21} & (S^{L}_{ij})_{22}
\end{pmatrix}.
\end{equation}
After constructing the LRCS of the $(k,\ell)$-th block of $S^{L}_{ij}$, i.e., {$(S^{L}_{ij})_{k\ell} \approx (U^{L-1}_{ij})_{k\ell}(S^{L-1}_{ij})_{k\ell}(V^{L-1}_{ij})_{k\ell}$} for $k,\ell=1,2$, we assemble them to obtain the MSCS of $S^{L}_{ij} $ as follows:
\begin{equation}
\label{eq:facSL1234}
{S^{L}_{ij} \approx U^{L-1}_{ij}S^{L-1}_{ij}V^{L-1}_{ij},}
\end{equation}
where 
\begin{equation}
\label{eq:expressUSVL-1}
\begin{split}
U^{L-1}_{ij} &= 
\begin{pmatrix}
(U^{L-1}_{ij})_{11} & & (U^{L-1}_{ij})_{12} & \\
& (U^{L-1}_{ij})_{21} & & (U^{L-1}_{ij})_{22}
\end{pmatrix},\\
S^{L-1}_{ij} &= 
\begin{pmatrix}
(S^{L-1}_{ij})_{11} & & & \\
& & (S^{L-1}_{ij})_{21} & \\
& (S^{L-1}_{ij})_{12} & & \\
& & & (S^{L-1}_{ij})_{22}
\end{pmatrix},\\
V^{L-1}_{ij} &= 
\begin{pmatrix}
(V^{L-1}_{ij})_{11} & \\
& (V^{L-1}_{ij})_{12} \\
(V^{L-1}_{ij})_{21} & \\
& (V^{L-1}_{ij})_{22}
\end{pmatrix},
\end{split}
\end{equation}
according to Section \ref{sec:matrixsplit}.
 
Finally, we organize the factorizations in \eqref{eq:facSL1234} for all $i,j=1,2$ to form a factorization of $S^{L}$ as
\begin{equation}
\label{eq:facSL}
S^{L} \approx U^{L-1}S^{L-1}V^{L-1},
\end{equation}
where 
\begin{equation}
\label{eq:expressUSV_L-1}
\begin{split}
U^{L-1} &= 
\begin{pmatrix}
U^{L-1}_{11} & & & \\
& U^{L-1}_{21} & & \\
& & U^{L-1}_{12} & \\
& & & U^{L-1}_{11}
\end{pmatrix},\\
S^{L-1} &= 
\begin{pmatrix}
S^{L-1}_{11} & & & \\
& & S^{L-1}_{21} & \\
& S^{L-1}_{12} & & \\
& & & S^{L-1}_{22}
\end{pmatrix},\\
V^{L-1} &= 
\begin{pmatrix}
V^{L-1}_{11} & & & \\
& V^{L-1}_{12} & & \\
& & V^{L-1}_{21} & \\
& & & S^{L-1}_{22}
\end{pmatrix},
\end{split}
\end{equation}
leading to a second level factorization of $A$:
\[
A\approx U^L U^{L-1}S^{L-1}V^{L-1}V^L.
\]
Figure \ref{fig:exsfac} visualizes the recursive MSCS of $S^L$ in \eqref{eq:facSL} when $A$ is a complementary low-rank matrix with dyadic trees of depth $5$ and $16$ leaf nodes in Figure \ref{fig:partition-K}.

\begin{figure}[htp]
\begin{minipage}{\textwidth}
\centering
\resizebox{3.5cm}{!}{
\begin{tikzpicture}[baseline=-0.5ex]
      \tikzset{every left delimiter/.style={xshift=-1ex},every right delimiter/.style={xshift=1ex}}
      \matrix (mat) [matrix of math nodes, left delimiter=(, right delimiter=)] {
      \draw;
      \draw[fill=gray] (0,32) rectangle (8,24);
      \draw[fill=gray] (16,16) rectangle (24,24); 
      \draw[fill=gray] (8,8) rectangle (16,16);  
      \draw[fill=gray] (24,0) rectangle (32,8); 
\\
      };
\end{tikzpicture}
}
$\approx$
\resizebox{3.5cm}{!}{
\begin{tikzpicture}[baseline=-0.5ex]
      \tikzset{every left delimiter/.style={xshift=-1ex},every right delimiter/.style={xshift=1ex}}
      \matrix (mat) [matrix of math nodes, left delimiter=(, right delimiter=)] {
      \draw;
\foreach \i in {0,8,...,24}{
\foreach \j in {0,1,...,3}{
    \draw[fill=lightgray] (\i+\j,32-\i-\j-\j) rectangle (\i+\j+1,32-\i-\j-\j-2);
    \draw[fill=lightgray] (\i+\j+4,32-\i-\j-\j) rectangle (\i+\j+5,32-\i-\j-\j-2);
}
}\\
      };
\end{tikzpicture}
}
\resizebox{3.5cm}{!}{
\begin{tikzpicture}[baseline=-0.5ex]
      \tikzset{every left delimiter/.style={xshift=-1ex},every right delimiter/.style={xshift=1ex}}
      \matrix (mat) [matrix of math nodes, left delimiter=(, right delimiter=)] {
      \draw;
\draw[fill=lightgray] (0,32) rectangle (2,30);
\draw[fill=lightgray] (4,30) rectangle (6,28);
\draw[fill=lightgray] (2,28) rectangle (4,26);
\draw[fill=lightgray] (6,26) rectangle (8,24);
\draw[fill=lightgray] (16,24) rectangle (18,22);
\draw[fill=lightgray] (20,22) rectangle (22,20);
\draw[fill=lightgray] (18,20) rectangle (20,18);
\draw[fill=lightgray] (22,18) rectangle (24,16);
\draw[fill=lightgray] (8,16) rectangle (10,14);
\draw[fill=lightgray] (12,14) rectangle (14,12);
\draw[fill=lightgray] (10,12) rectangle (12,10);
\draw[fill=lightgray] (14,10) rectangle (16,8);
\draw[fill=lightgray] (24,8) rectangle (26,6);
\draw[fill=lightgray] (28,6) rectangle (30,4);
\draw[fill=lightgray] (26,4) rectangle (28,2);
\draw[fill=lightgray] (30,2) rectangle (32,0);
\\
      };
\end{tikzpicture}
}
\resizebox{3.5cm}{!}{
\begin{tikzpicture}[baseline=-0.5ex]
      \tikzset{every left delimiter/.style={xshift=-1ex},every right delimiter/.style={xshift=1ex}}
      \matrix (mat) [matrix of math nodes, left delimiter=(, right delimiter=)] {
      \draw;
\foreach \i in {0,8,...,24}{
\foreach \j in {0,1,...,3}{
    \draw[fill=lightgray] (\i+\j+\j,32-\i-\j) rectangle (\i+\j+\j+2,32-\i-\j-1);
    \draw[fill=lightgray] (\i+\j+\j,32-\i-\j-4) rectangle (\i+\j+\j+2,32-\i-\j-5);
}
}\\
      };
\end{tikzpicture}
}\\
\end{minipage}
\caption{The visualization of the recursive MSCS of $S^L=U^{L-1}S^{L-1}V^{L-1}$ in \eqref{eq:facSL} when $A$ is a complementary low-rank matrix with dyadic trees of depth $5$ and $16$ leaf nodes in Figure \ref{fig:partition-K}.}
\label{fig:exsfac}
\end{figure}
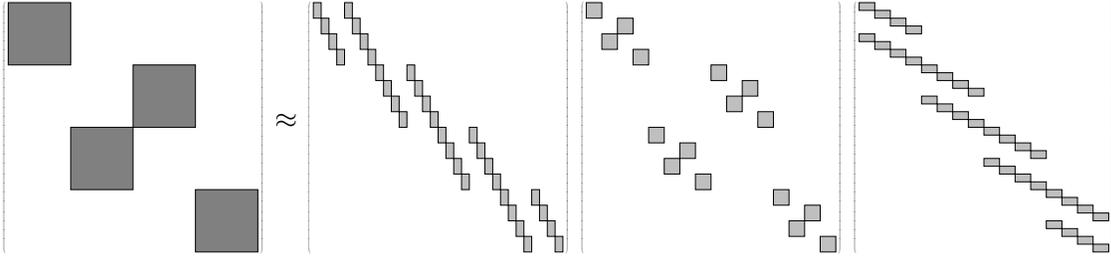

Comparing \eqref{eq:leaffac}, \eqref{eqn:USV2}, \eqref{eq:facSL}, and \eqref{eq:expressUSV_L-1}, we can see a fractal structure in each level of the middle factor $S^{\ell}$ for $\ell=L$ and $L-1$. For example in \eqref{eq:expressUSV_L-1} (see Figure \ref{fig:exsfac} for its visulaization), $S^{L-1}$ has $4$ submatrices $S^{L-1}_{ij}$ with the same structure as $S^{L}$ for all $i$ and $j$. $S^{L-1}_{ij}$ can be factorized into a product of three matrices with the same sparsity structure as the factorization {$S^L \approx U^{L-1}S^{L-1}V^{L-1}$}. Hence, we can apply MSCS recursively to each $S^{\ell}$ and assemble matrix factors hierarchically for $\ell=L$, $L-1$, $\dots$, $L/2$ to obtain
\begin{equation}
\label{eq:finishIDBF}
A \approx U^{L}U^{L-1}\cdots U^{h} S^h V^{h}\cdots V^{L-1}V^{L},
\end{equation}
where $h=L/2$. In the $\ell$-th  recursive MSCS, $S^\ell$ has $2^{2(L-\ell+1)}$ dense submatrices with compressible row and column dyadic trees with $\frac{m}{2^{2(L-\ell+1)}}$ leaves and depth $L-2(L-\ell+1)$. Hence, the recursive MSCS stops after $h=L/2$ iterations when $S^h$ no longer contains any compressible submatrix.

When $S^\ell$ is still compressible, since there are $2^{2(L-\ell+1)}$ dense submatrices and each contains $\frac{m}{2^{2(L-\ell+1)}}$ leaves, there are $2^{2(L-\ell+1)}\frac{m}{2^{2(L-\ell+1)}}m=\frac{N}{n_0}$ low-rank submatrices to be factorized. Linear IDs only require $O(k^3)$ operations for each low-rank submatrix, and hence at most $O(\frac{k^3}{n_0} N)$ for each level of factorization, and $O(\frac{k^3}{n_0} N\log N)$ for the whole IDBF.

\section{Numerical results}
\label{sec:results}
This section presents several numerical examples to demonstrate the
effectiveness of the algorithms proposed above.  The first three examples 
are complementary low-rank matrices coming from non-uniform Fourier 
transform, Fourier integral operators, and special function transforms. The 
last two examples are hierarchical complementary matrices \cite{HSSBF} 
from 2D Helmholtz boundary integral methods in the high-frequency regime. All implementations are in MATLAB\textsuperscript{\textregistered} on a server computer with a single thread and 3.2 {GHz CPU}. This new framework will be incorperated into the ButterflyLab\footnote{Available on \url{https://github.com/ButterflyLab}.} in the future.

Let $\{u^d(x),x\in X\}$ and $\{u^a(x),x\in X\}$ denote the results
given by the direct matrix-vector multiplication and the butterfly
factorization.  The accuracy of applying the butterfly factorization
algorithm is estimated by the following relative error
\begin{equation}
\epsilon^a = \sqrt{\cfrac{\sum_{x\in S}|u^a(x)-u^d(x)|^2}
{\sum_{x\in S}|u^d(x)|^2}},
\end{equation}
where $S$ is a point set of size $256$ randomly sampled from $X$. In all of our examples, the oversampling parameter $t$ in the linear scaling ID is set to $1$, and the number of points in a leave node is set to $n_0=8$. {Then the} number of randomly sampled grid points {in the ID is equal to the rank parameter $k$, which we will here also call the truncation rank.}

\paragraph{Example 1.}
Our first example is to evaluate a {1D} FIO of the
following form:
\begin{equation}\label{eq:example1}
u(x) = \int_{\mathbb{R}}e^{2\pi \imath \Phi(x,\xi)}\hat{f}(\xi)d\xi,
\end{equation}
where $\hat{f}$ is the {Fourier} transform of $f$,
and $\Phi(x,\xi)$ is a phase function given by
\begin{equation}
\Phi(x,\xi) = x\cdot \xi + c(x)|\xi|,~~~c(x) = (2+0.2\sin(2\pi x))/16.
\label{eqn:1D-FIO-kernal}
\end{equation}
The discretization of \eqref{eq:example1} is
\begin{equation}
\label{eqn:1D-FIO}
u(x_i) = \sum_{\xi_j}e^{2\pi \imath \Phi(x_i,\xi_j)}\hat{f}(\xi_j),
\quad i,j=1,2,\dots,N,
\end{equation}
where $\{x_i\}$ and $\{\xi_j\}$ are uniformly distributed points
in $[0,1)$ and $[-N/2,N/2)$ following
\begin{equation}\label{eqn:1D-xandxi}
x_i = (i-1)/N \text{ and } \xi_j = j-1-N/2.
\end{equation}
\eqref{eqn:1D-FIO} can be represented in a matrix form as $u=Kg$,
where $u_i=u(x_i)$, $K_{ij} = e^{2\pi\imath \Phi(x_i,\xi_j)}$ and $g_j
= \hat{f}(\xi_j)$. The matrix $K$ satisfies the complementary
low-rank property with a rank parameter $k$ independent of the problem size $N$ when $\xi$ is sufficiently far away from the origin as proved in \cite{FIO09,FIO14}. To make the presentation simpler, we will directly apply IDBF to the whole $K$ instead of performing a polar transform as in \cite{FIO09} or apply IDBF hierarchically as in \cite{MBA}. Hence, due to the non-smoothness of the $\Phi(x,\xi)$ at $\xi=0$, submatrices intersecting with or close to the line $\xi=0$ have a local rank increasing slightly in $N$, while other submatrices have rank independent of $N$.  

Figure~\ref{fig:1D-fio1} to \ref{fig:1D-fio3} summarize the results of
this example for different grid sizes $N$. To compare IDs with Mock-Chebyshev points and randomly selected points in different cases, Figure \ref{fig:1D-fio1} shows the results for tolerance in \eqref{eqn:acc} $\epsilon=10^{-6}$ and the truncation rank $k$ being the smallest size of a submatrix (i.e., $k=\min\{m,n\}$ for a submatrix of size $m\times n$); Figure \ref{fig:1D-fio2} shows the results for $\epsilon=10^{-15}$ and $k=30$; Figure \ref{fig:1D-fio3} shows the results for $\epsilon=10^{-6}$ and $k=30$. Note that the accuracy of IDBF is expected to be $O(\epsilon)$, which may not be guaranteed, since the overall accuracy of IDBF is determined by all IDs in a hiearchical manner. Furthermore, if the rank parameter $k$ is too small for some low-rank matrices, then the error of the corresponding ID will propagate through the whole IDBF process and increase the error of the IDBF. 

We see that the IDBF applied to the whole matrix $K$ has $O(N\log^2 (N))$ factorization and application time in all cases with different parameters. The running time agrees with the scaling of the number of non-zero entries required in the data-sparse representation. In fact, when $N$ is large enough, the number of non-zero entries in the IDBF tends to scale as $O(N\log N)$, which means that the numerical scaling can approach to $O(N \log N)$ in both factorization and application when $N$ is large enough. IDBF via IDs with Mock-Chebyshev points is much more accurate than IDBF via IDs with random samples. The running time for three kinds of parameter pairs $(\epsilon,k)$ is almost the same. For the purpose of numerical accuracy, we prefer IDs with Mock-Chebyshev points with $(\epsilon,k)=(10^{-15},30)$. Hence, we will only present numerical results for IDs with Mock-Chebyshev points in later examples.

\begin{figure}[ht!]
  \begin{center}
    \begin{tabular}{ccc}
      \includegraphics[height=1.7in]{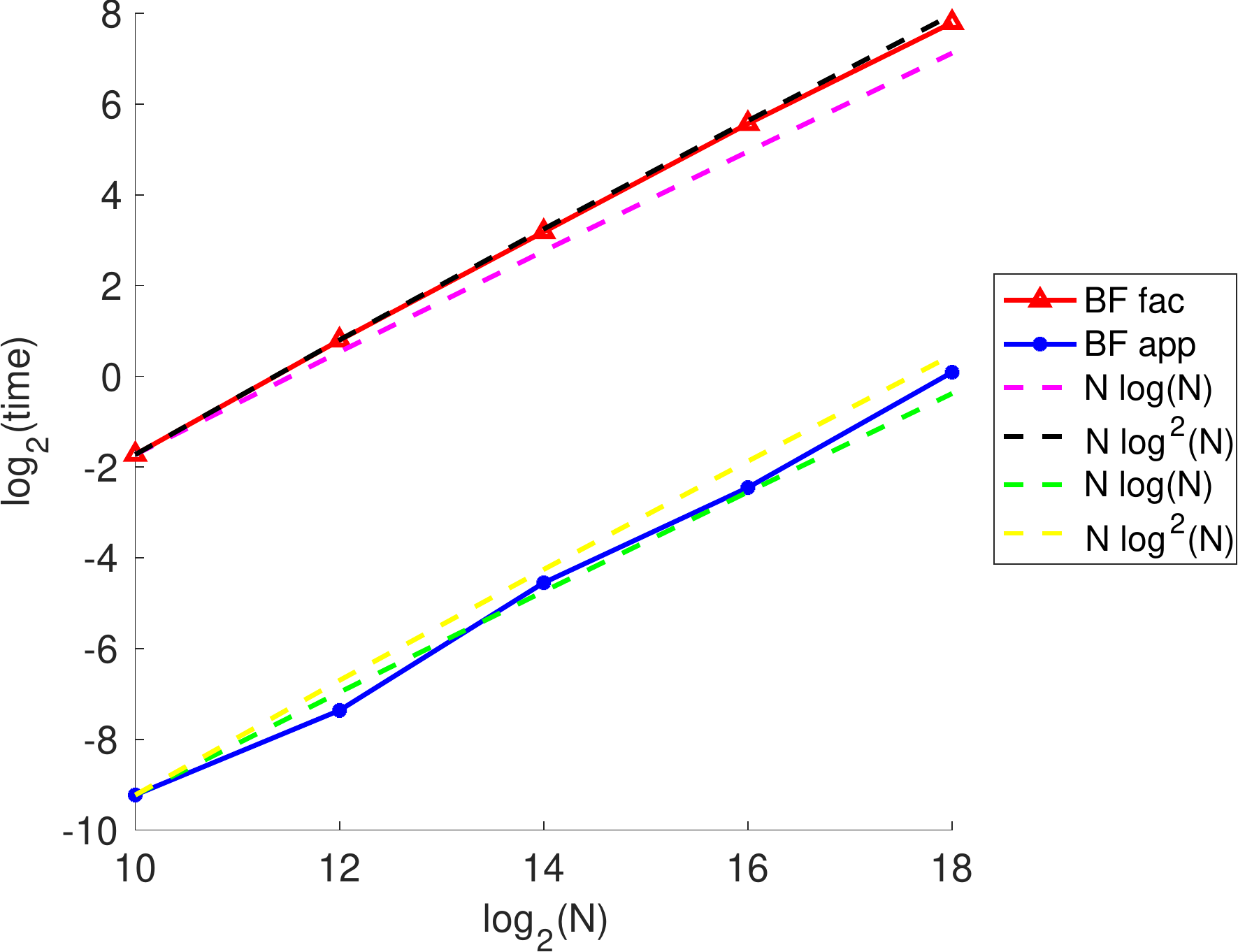}&
      \includegraphics[height=1.7in]{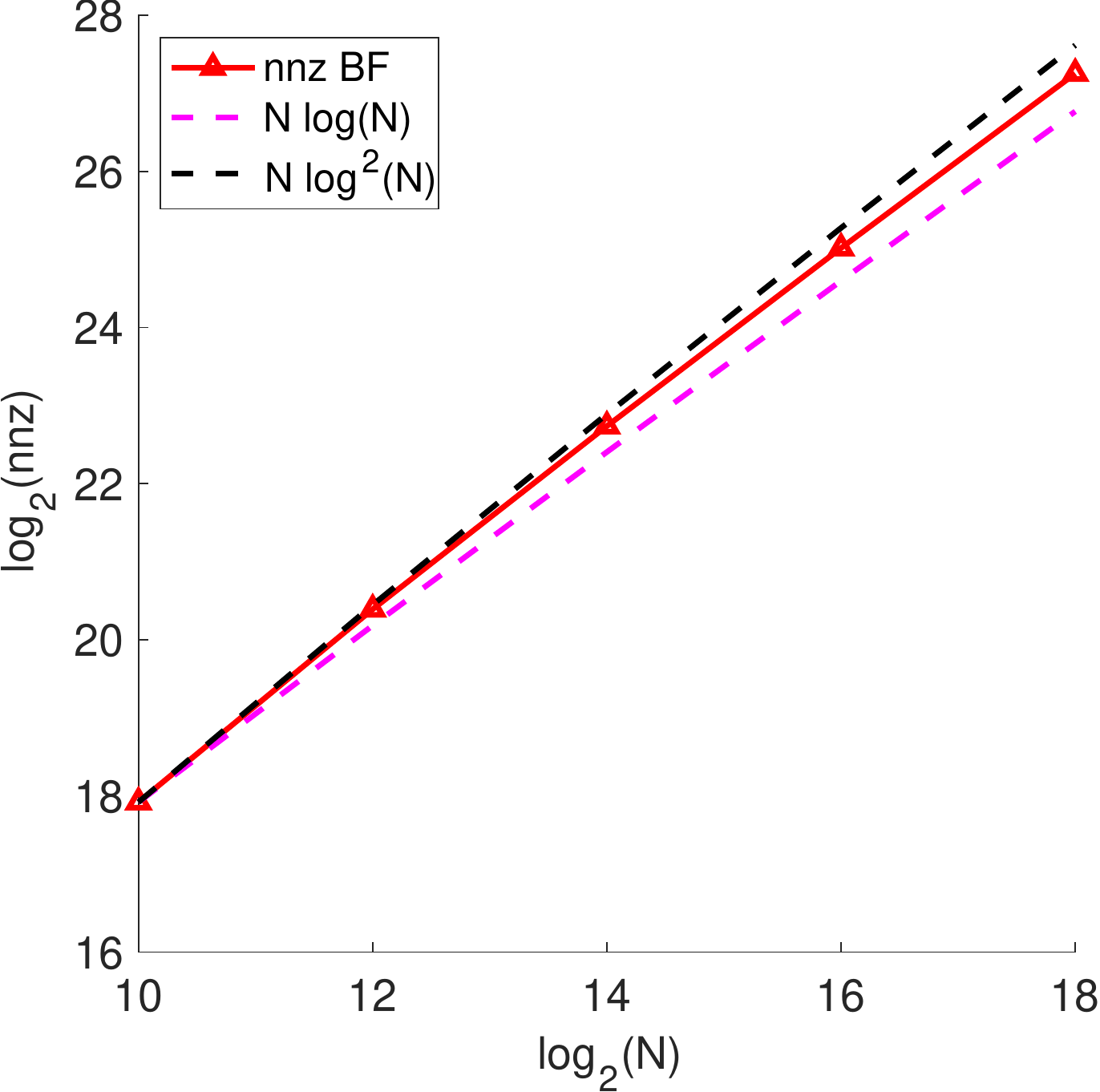}&
      \includegraphics[height=1.7in]{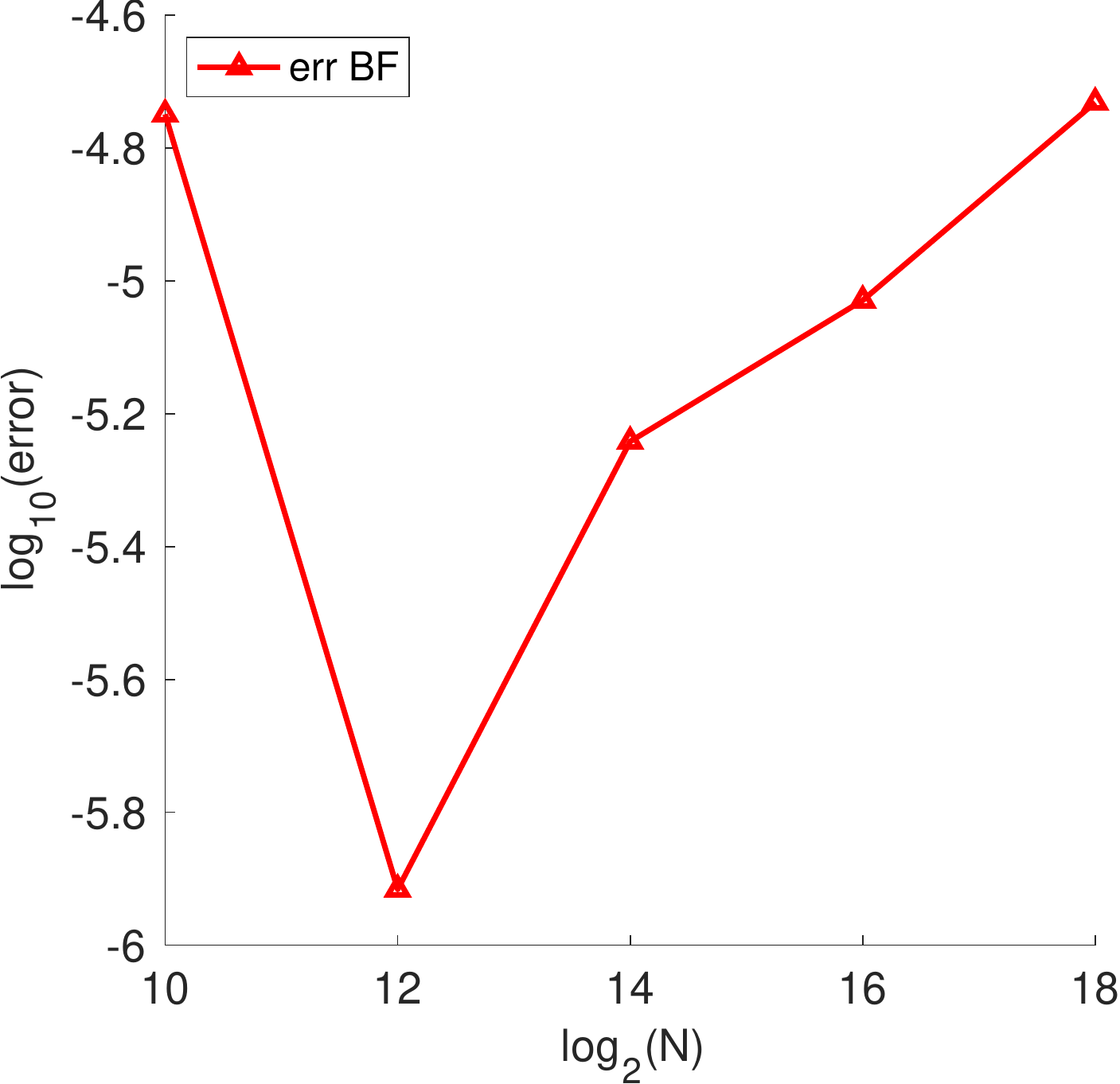}\\
      \includegraphics[height=1.7in]{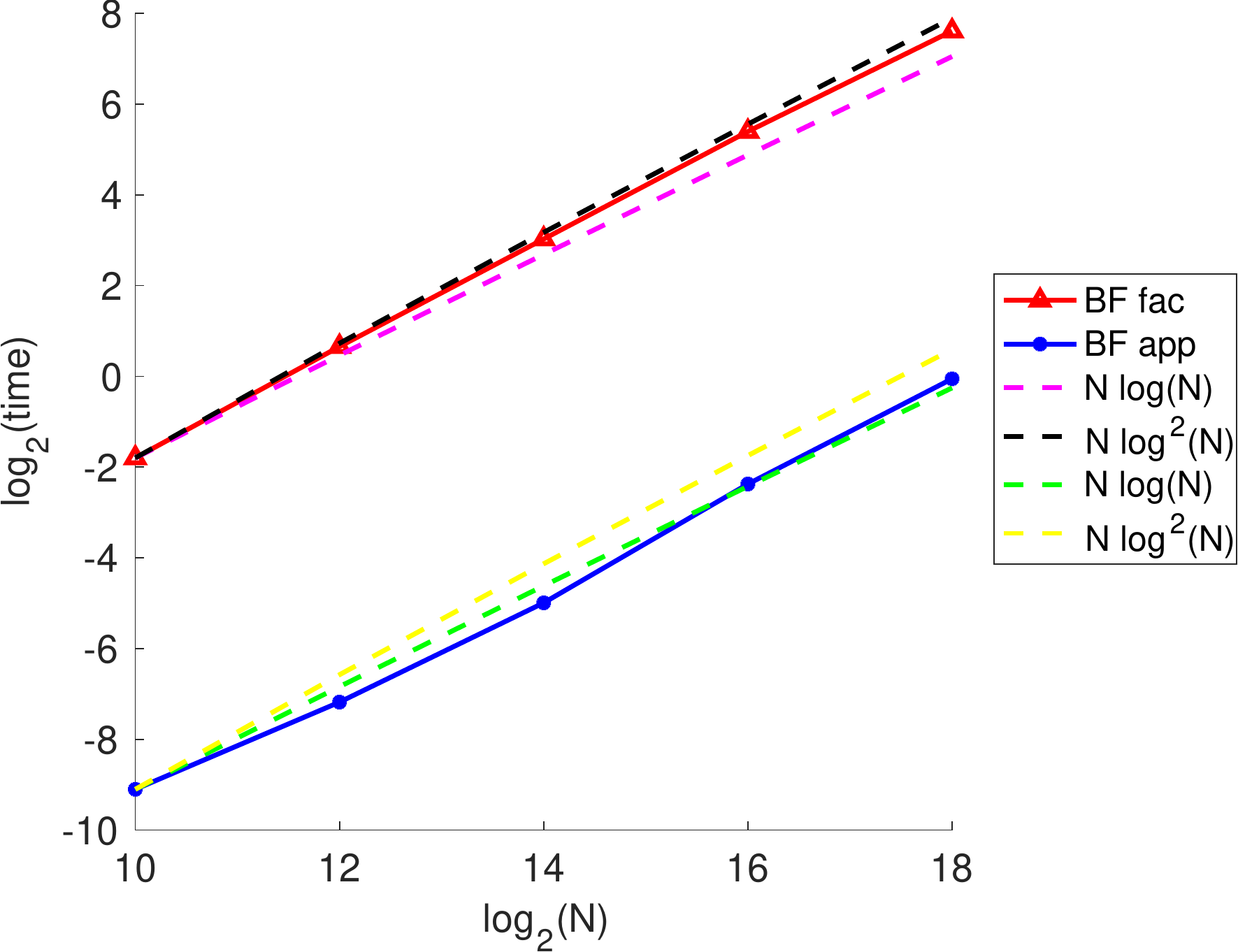}&
      \includegraphics[height=1.7in]{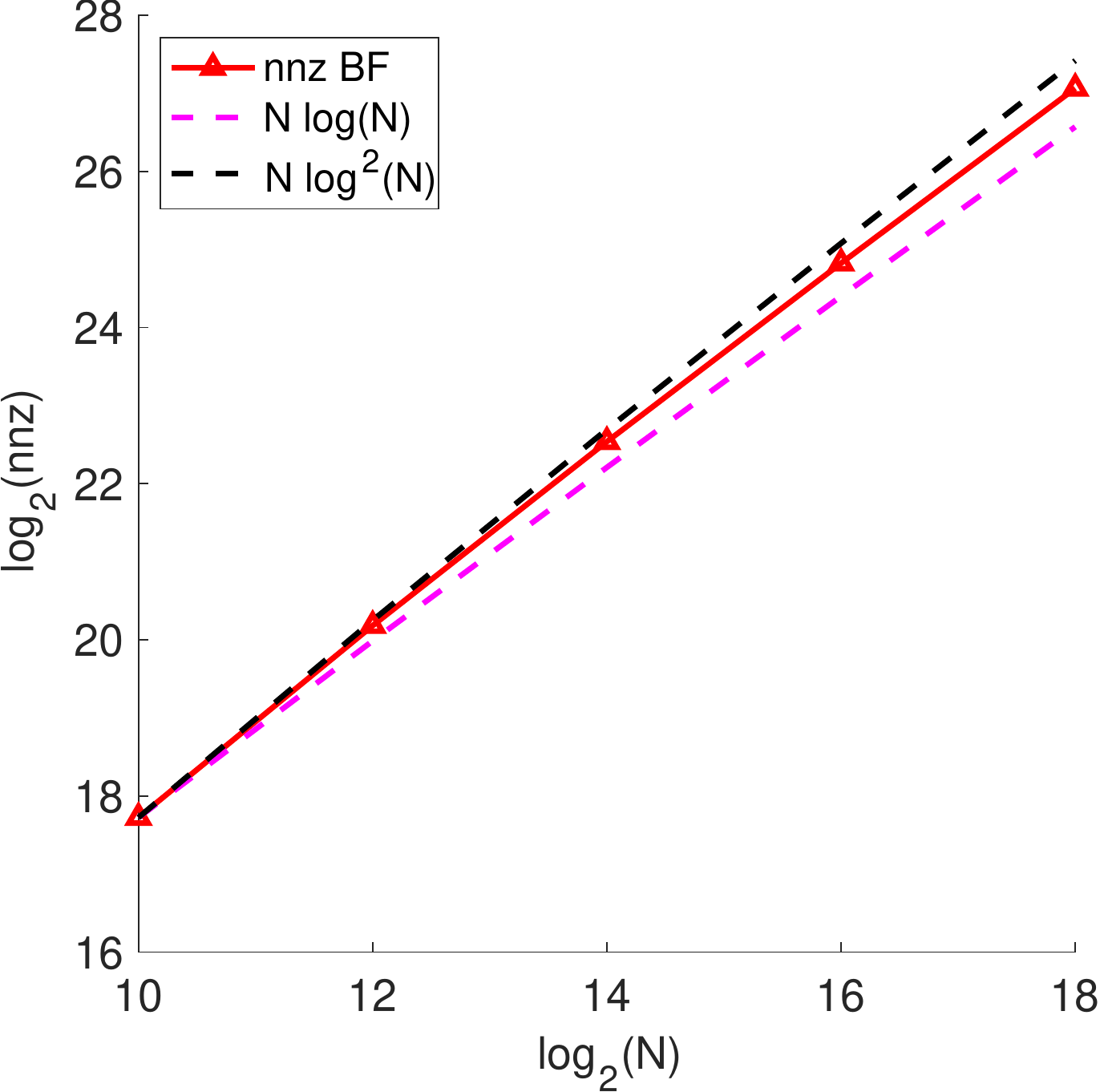}&
      \includegraphics[height=1.7in]{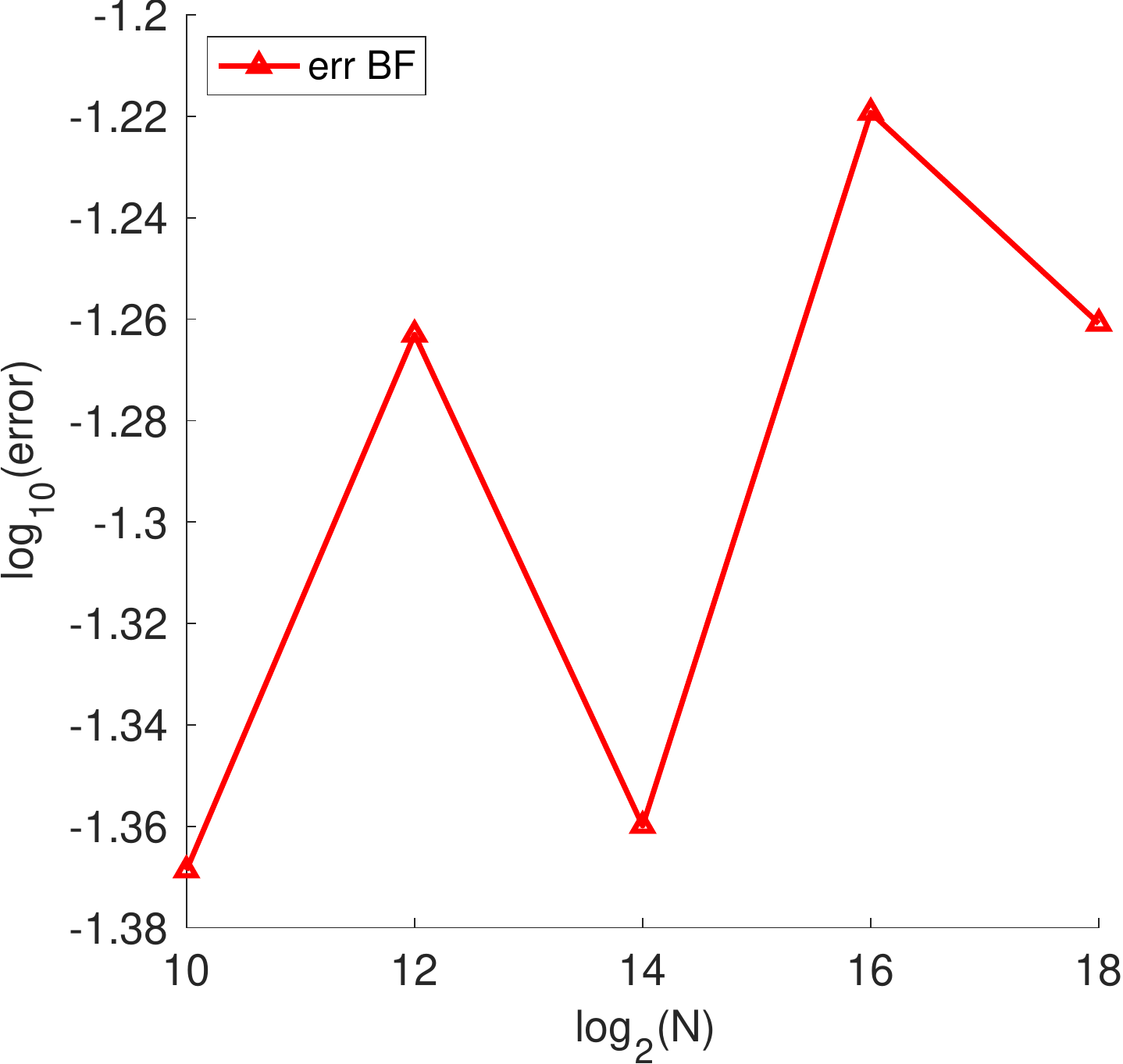}      
    \end{tabular}
  \end{center}
\caption{Numerical results for the FIO given in \eqref{eqn:1D-FIO}.
  $N$ is the size of the matrix; $nnz$ is the number of non-zero entries in the butterfly factorization, $err$ is the approximation error of the IDBF matvec. $\epsilon=10^{-6}$ and $k$ is the smallest size of a submatrix (i.e., $k=\min\{m,n\}$ for a submatrix of size $m\times n$).}
\label{fig:1D-fio1}
\end{figure}

\begin{figure}[ht!]
  \begin{center}
    \begin{tabular}{ccc}
      \includegraphics[height=1.7in]{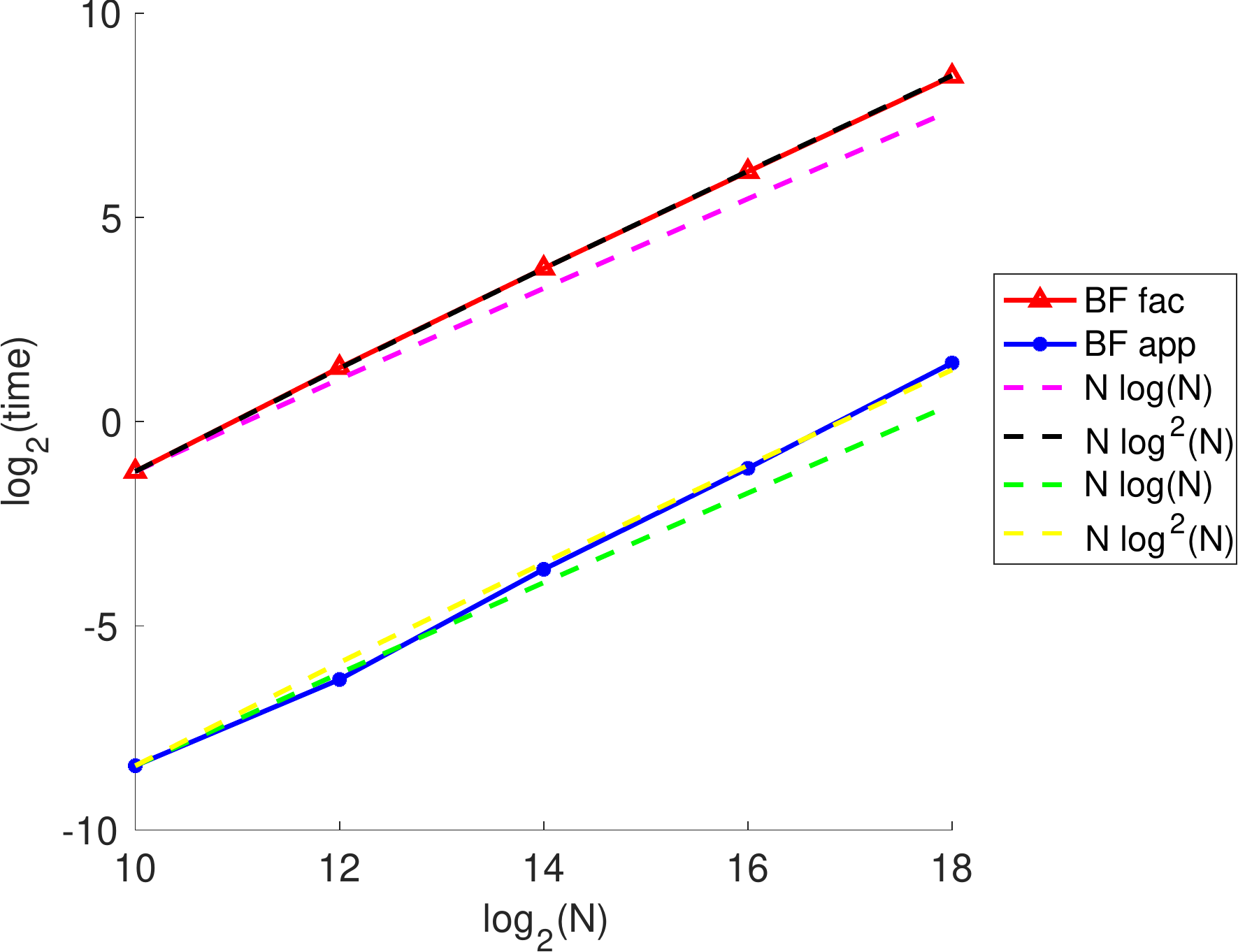}&
      \includegraphics[height=1.7in]{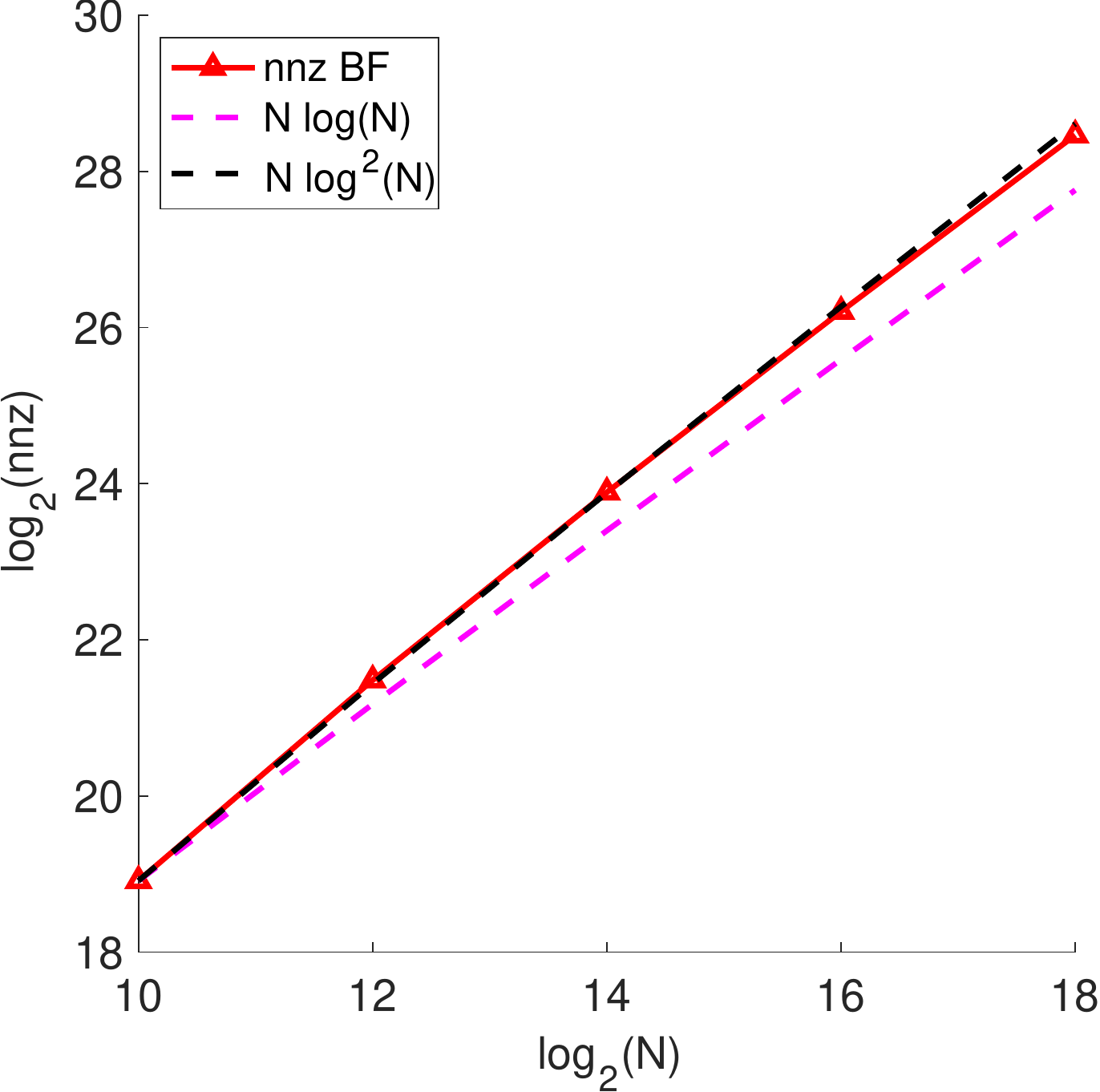}&
      \includegraphics[height=1.7in]{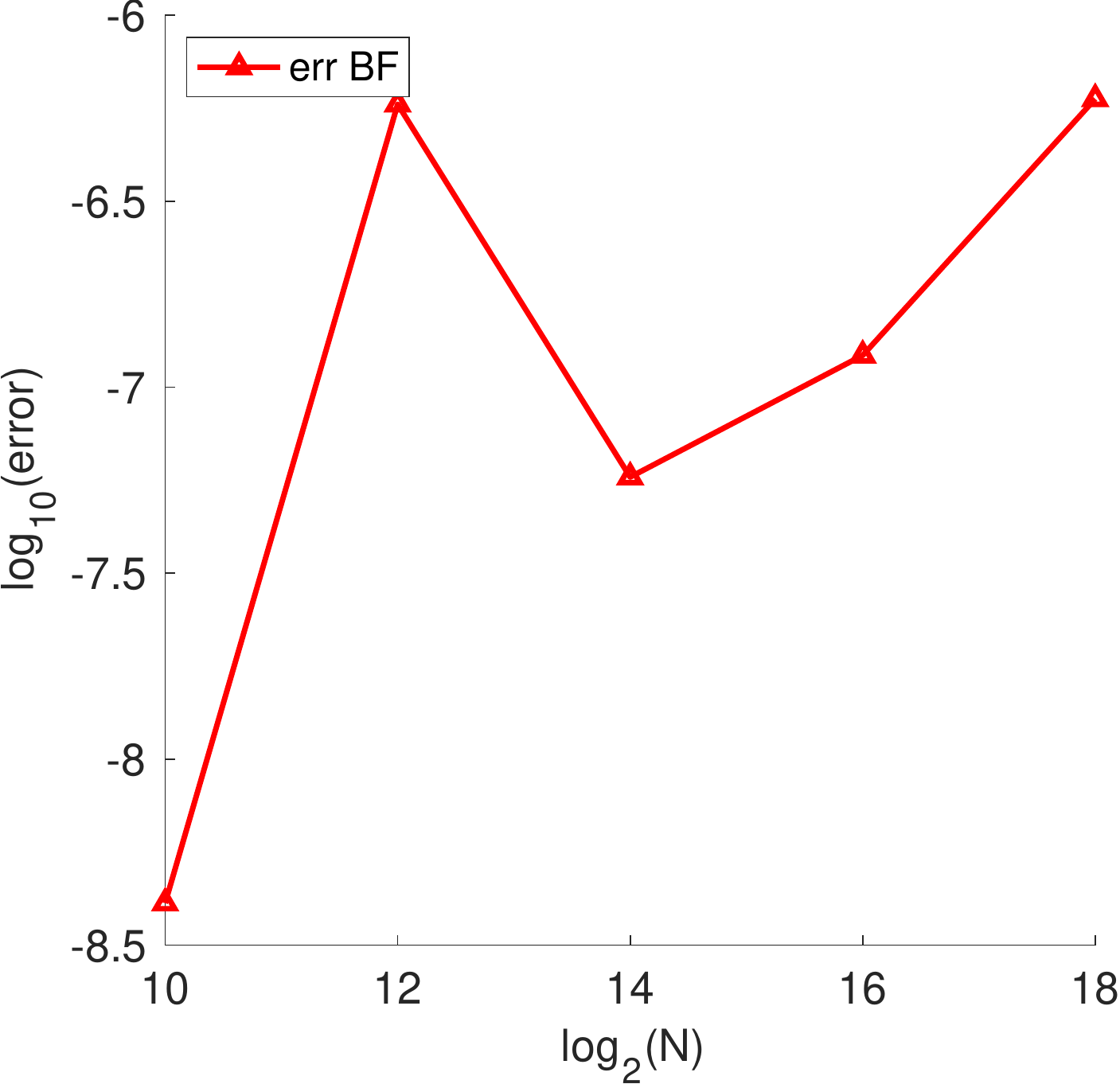}\\
      \includegraphics[height=1.7in]{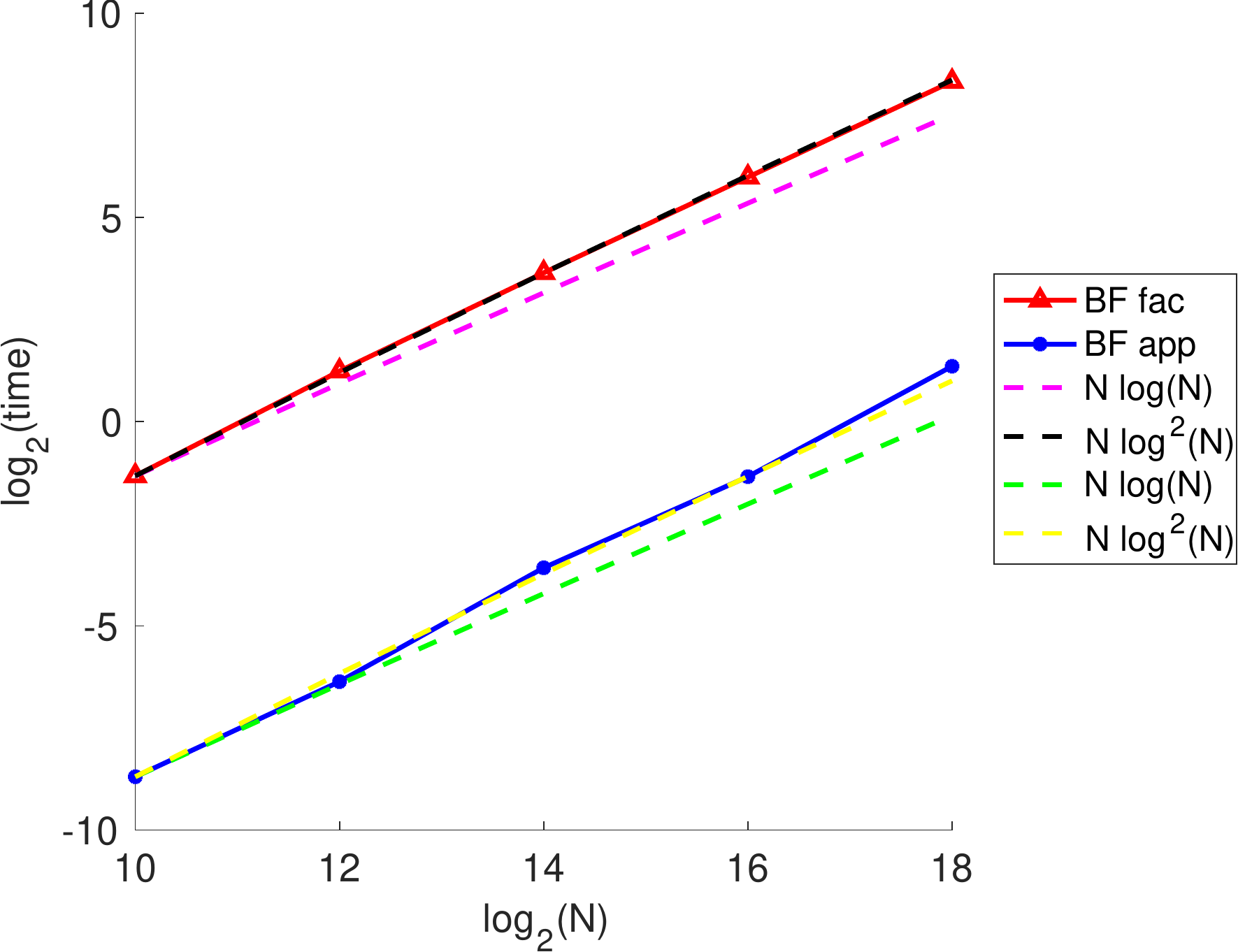}&
      \includegraphics[height=1.7in]{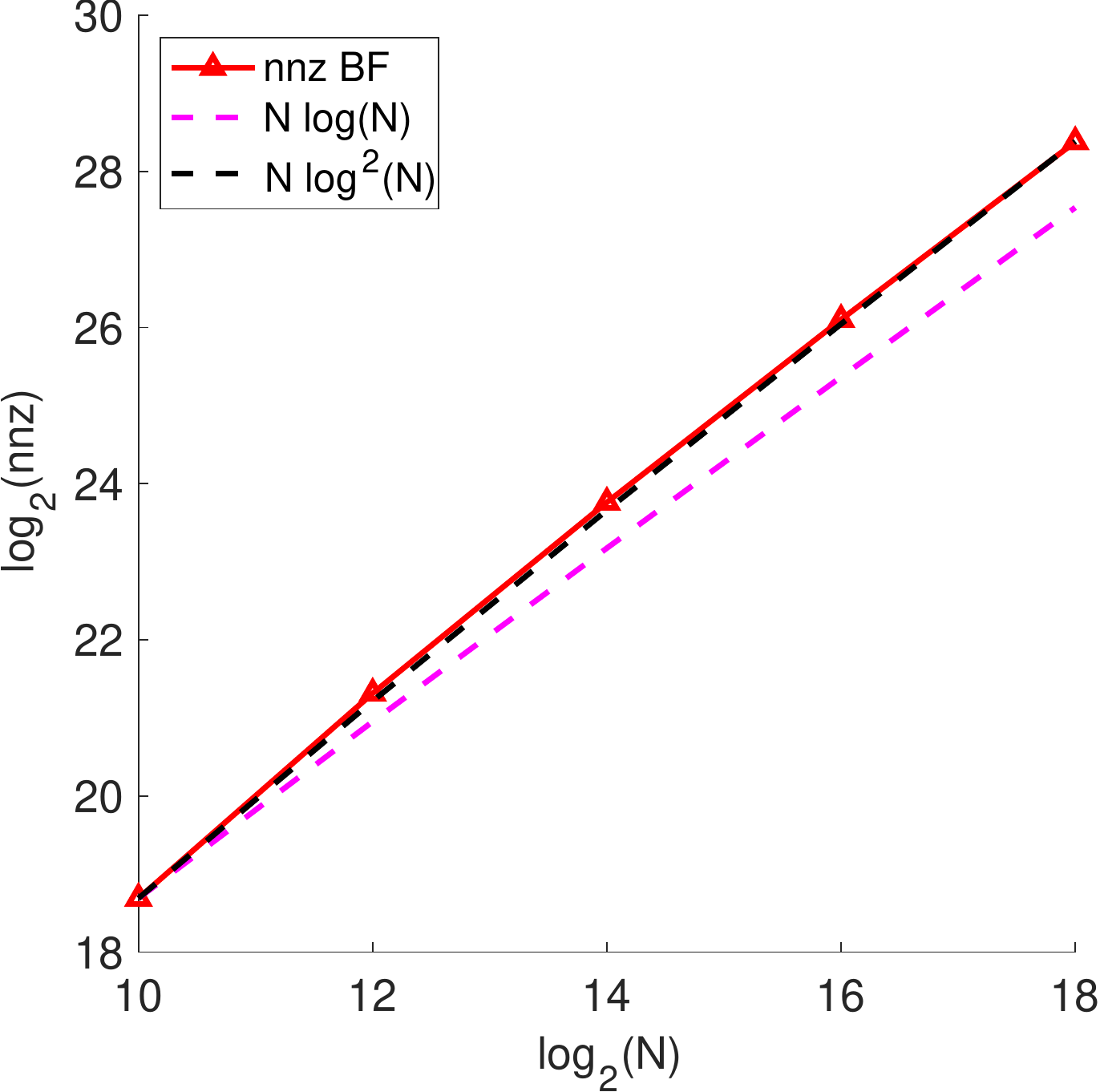}&
      \includegraphics[height=1.7in]{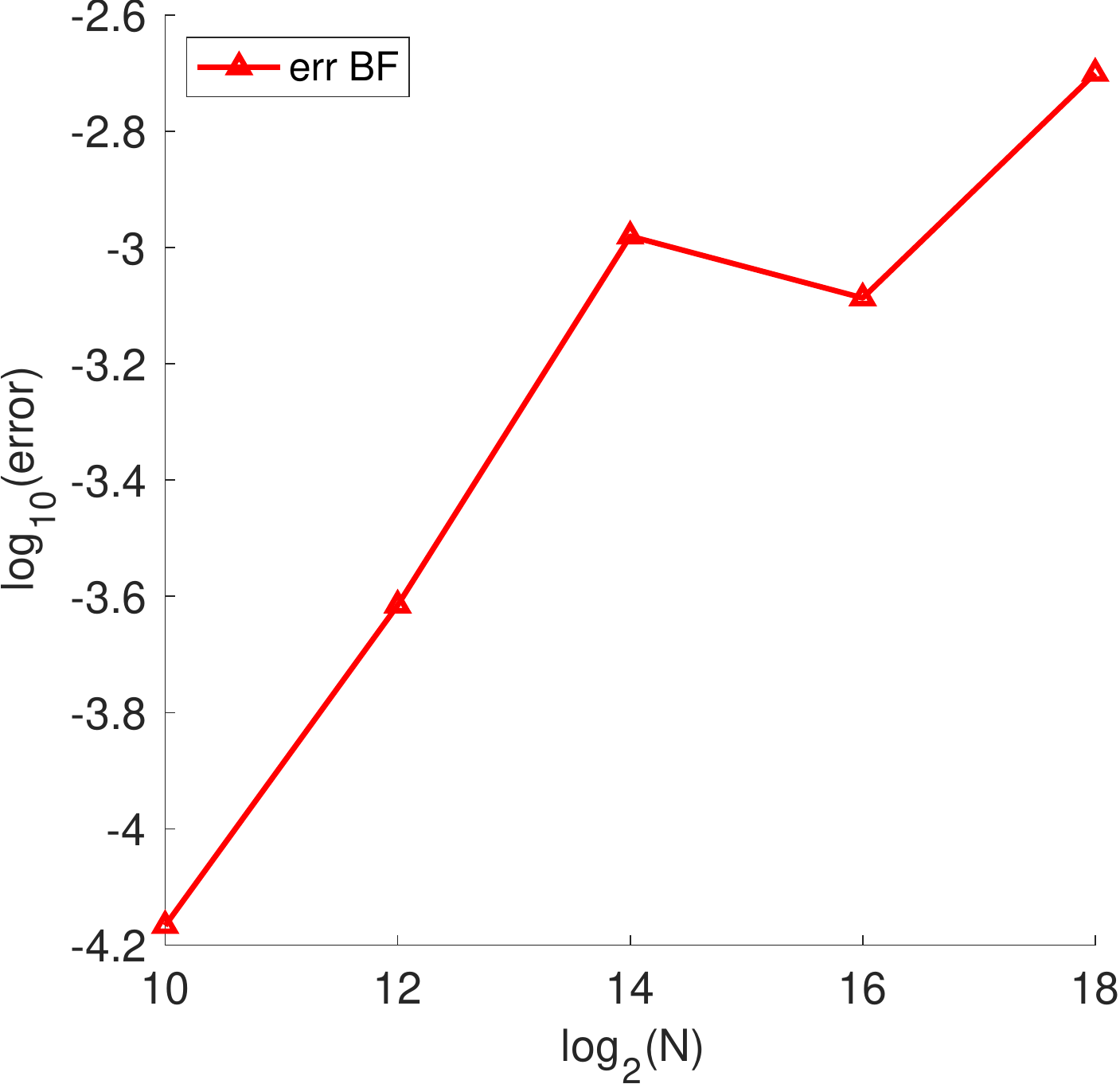}      
    \end{tabular}
  \end{center}
\caption{Numerical results for the FIO given in \eqref{eqn:1D-FIO}.
  $N$ is the size of the matrix; $nnz$ is the number of non-zero entries in the butterfly factorization, $err$ is the approximation error of the IDBF matvec. $\epsilon=10^{-15}$ and $k=30$.}
\label{fig:1D-fio2}
\end{figure}

\begin{figure}[ht!]
  \begin{center}
    \begin{tabular}{ccc}
      \includegraphics[height=1.7in]{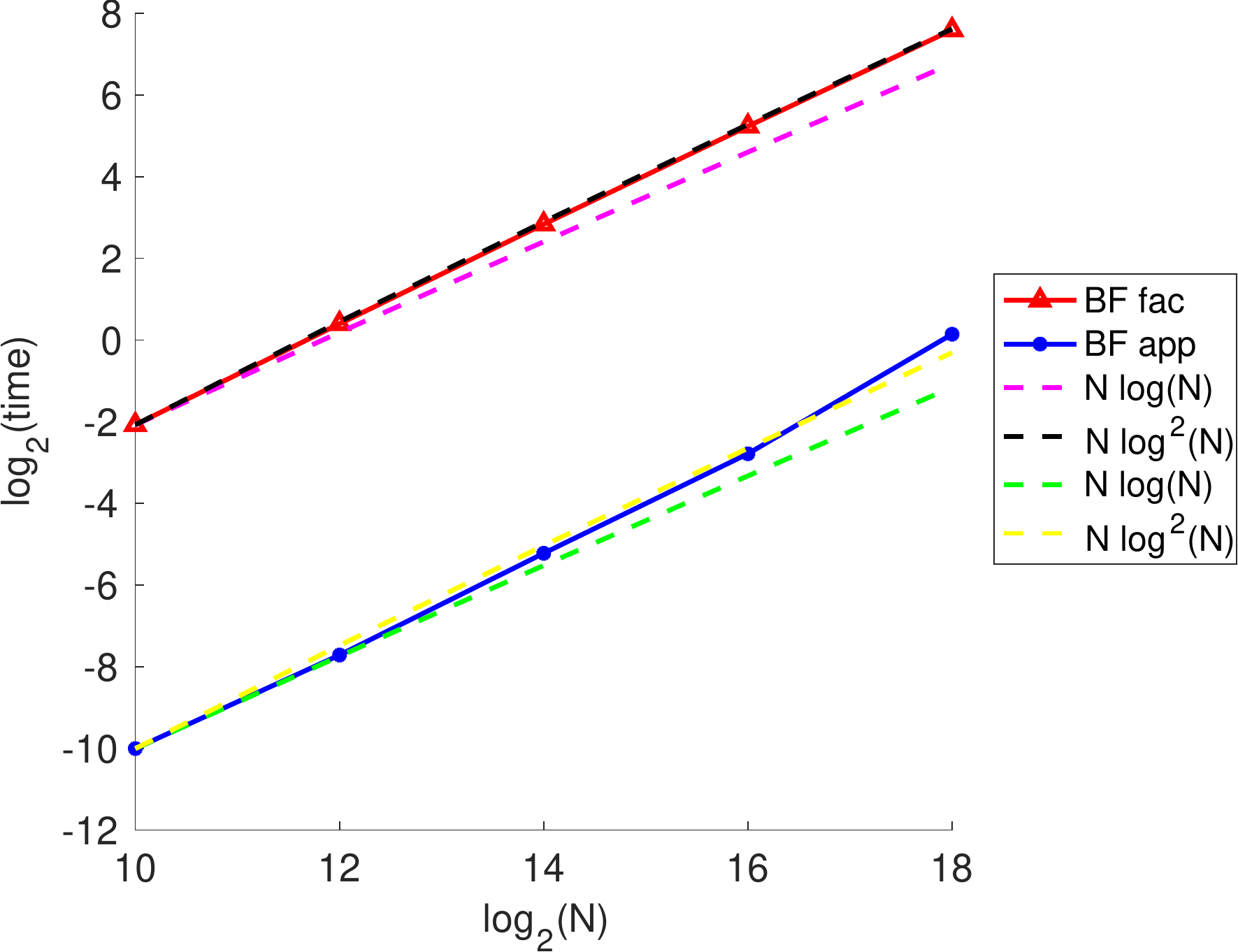}&
      \includegraphics[height=1.7in]{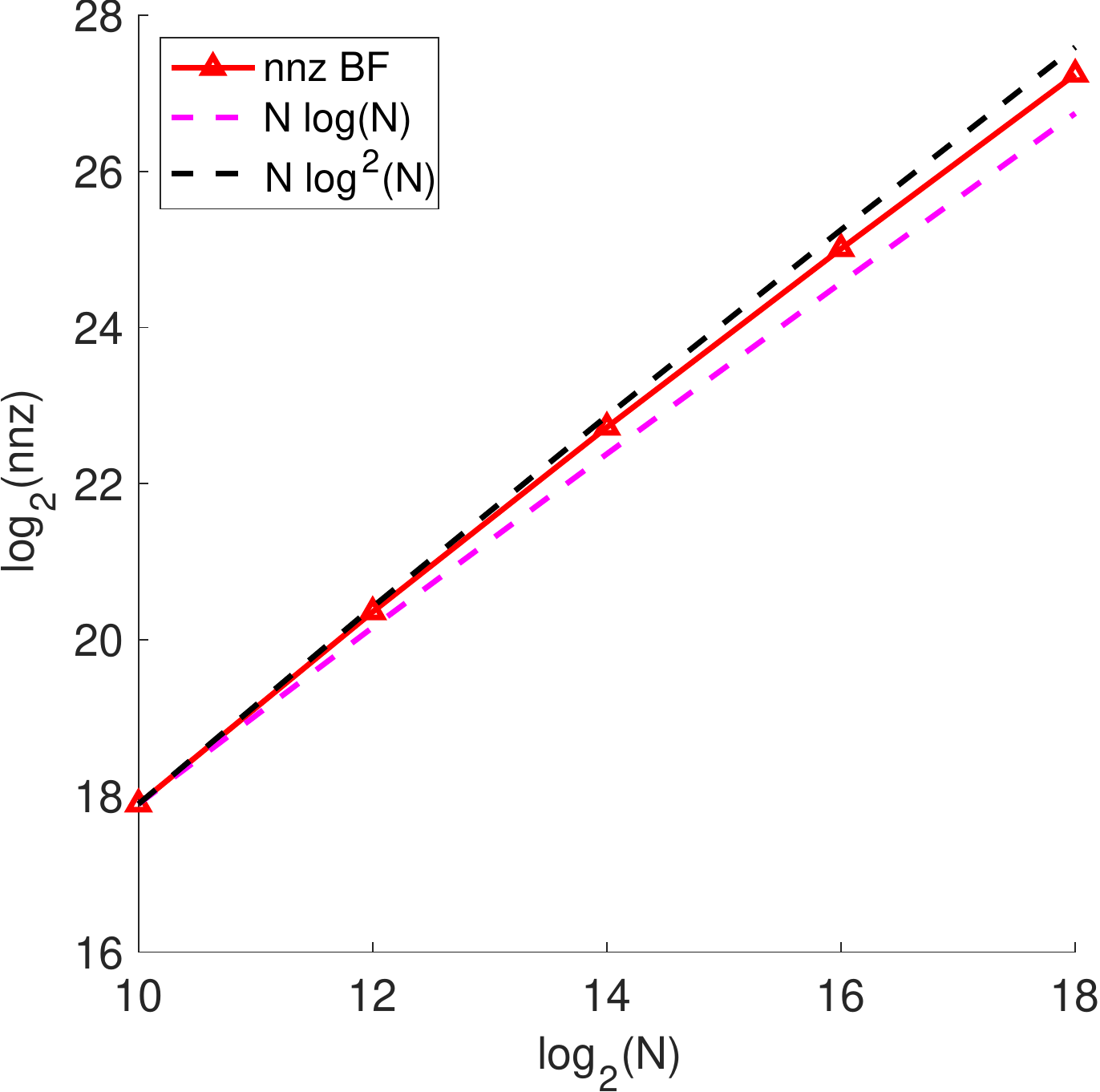}&
      \includegraphics[height=1.7in]{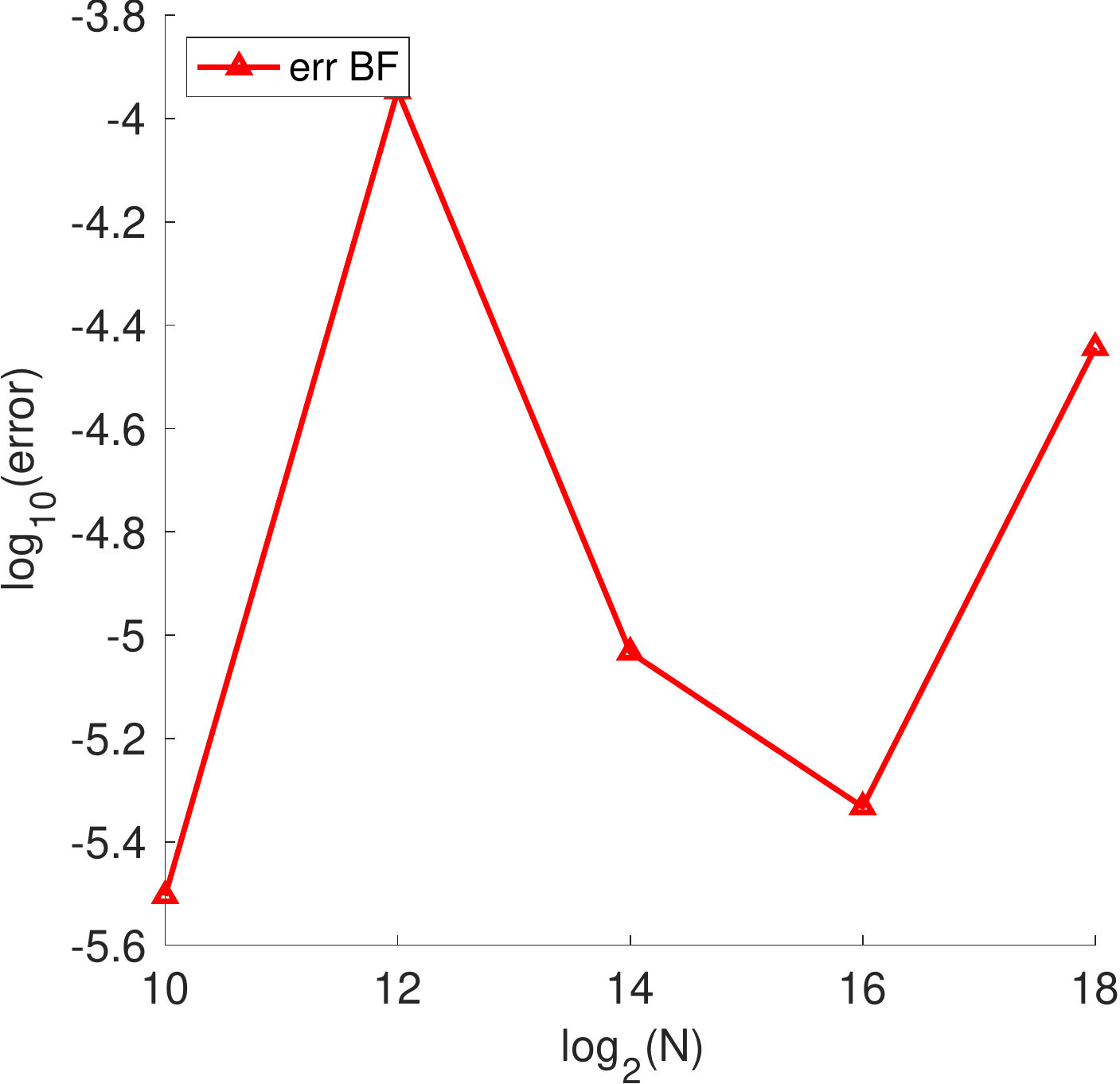}\\
      \includegraphics[height=1.7in]{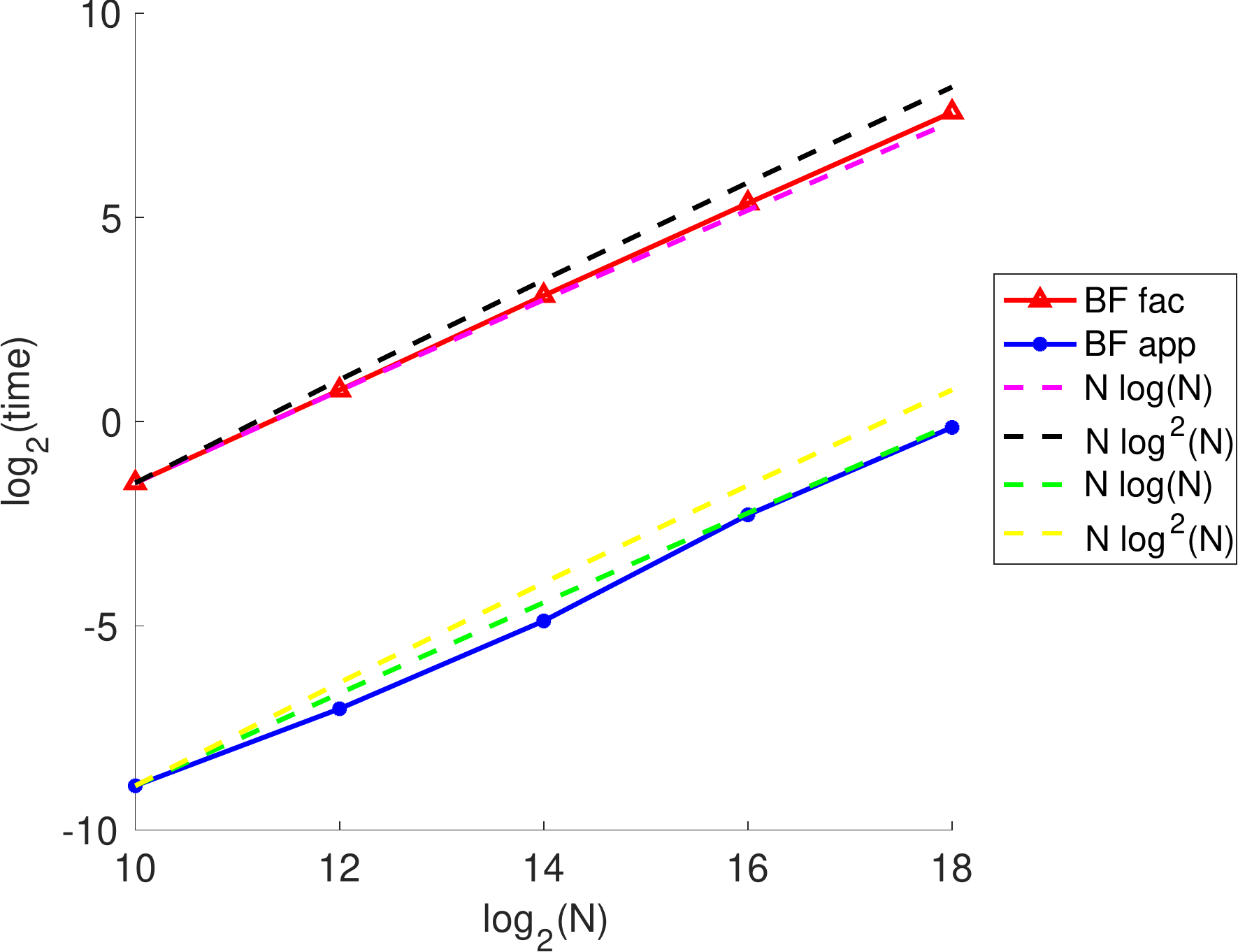}&
      \includegraphics[height=1.7in]{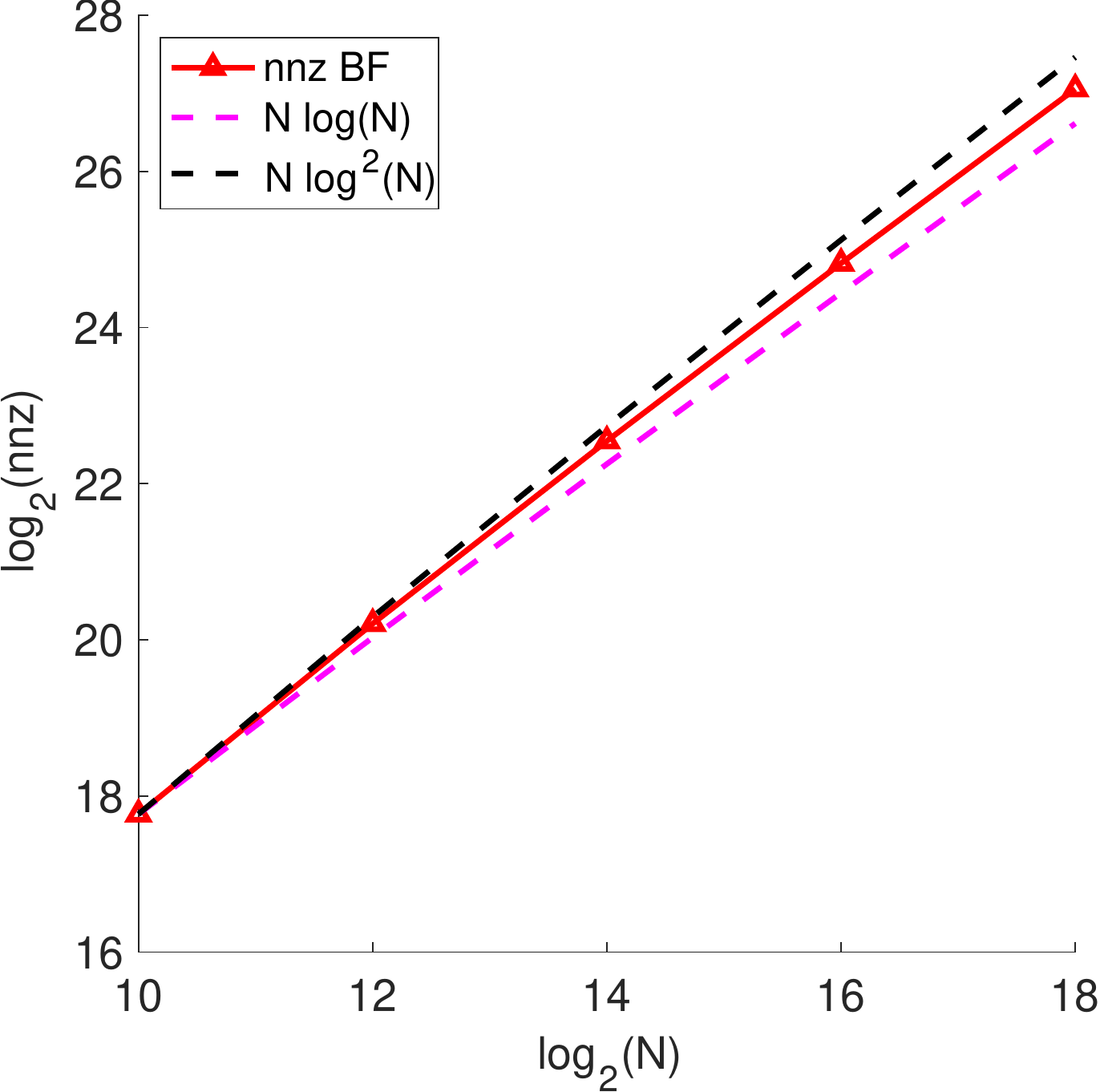}&
      \includegraphics[height=1.7in]{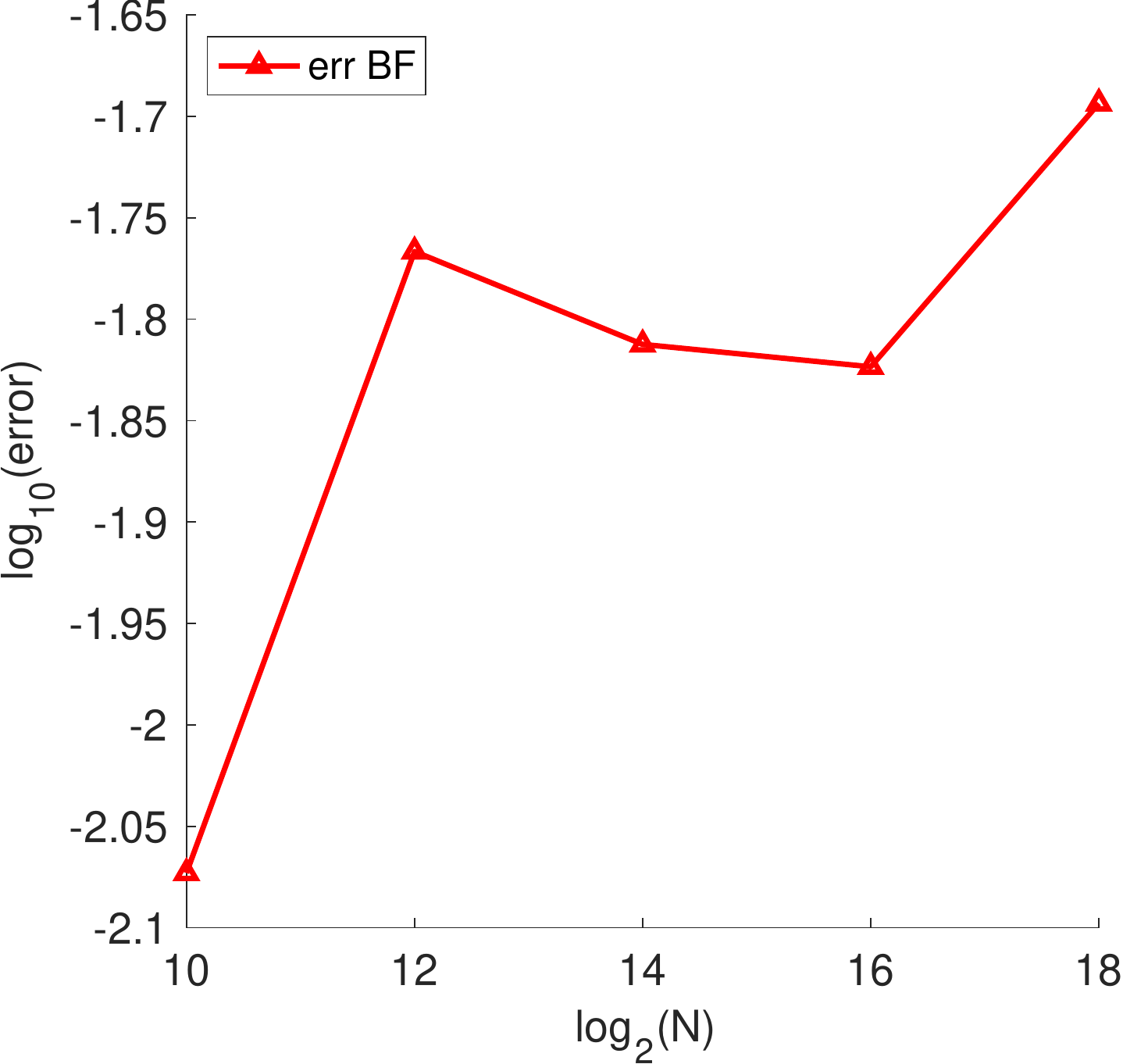}      
    \end{tabular}
  \end{center}
\caption{Numerical results for the FIO given in \eqref{eqn:1D-FIO}.
  $N$ is the size of the matrix; $nnz$ is the number of non-zero entries in the butterfly factorization, $err$ is the approximation error of the IDBF matvec. $\epsilon=10^{-6}$ and $k=30$.}
\label{fig:1D-fio3}
\end{figure}

\paragraph{Example 2.}
Next, we provide an example of a special function transform, the evaluation of Schl{\"o}milch expansions \cite{Sch} at $g_k=\frac{k-1}{N}$ for $1\leq k\leq N$:
\begin{equation}
\label{eqn:S}
u_k=\sum_{n=1}^N c_n J_\nu(g_k\omega_n),
\end{equation}
where $J_\nu$ is the Bessel function of the first kind with parameter $\nu=0$, and $\omega_n=n\pi$. It is demonstrated in
\cite{Butterfly2} that \eqref{eqn:S} can be represented via a matvec 
$u=Kg$, where $K$ satisfies the complementary low-rank property. An arbitrary entry of
$K$ can be calculated in $O(1)$ operations \cite{Bes} and hence IDBF is suitable for accelerating the matvec $u=Kg$. Other similar examples when $\nu\neq 0$ can be found in \cite{Sch} and they can be also evaluated by IDBF with the same operation counts. 

Figure \ref{fig:S} summarizes the results of this example for different problem sizes $N$ with different parameter pairs $(\epsilon,k)$. The results show that IDBF applied to this example has $O(N\log^2 (N))$ factorization and application time. The running time agrees with the scaling of the number of non-zero entries required in the data-sparse representation to guarantee the approximation accuracy.  In fact, when $N$ is large enough, the number of non-zero entries in the IDBF tends to scale as $O(N\log N)$, which means that the numerical scaling can approach to $O(N \log N)$ in both factorization and application when $N$ is large enough.

\begin{figure}[ht!]
  \begin{center}
    \begin{tabular}{ccc}
      \includegraphics[height=1.7in]{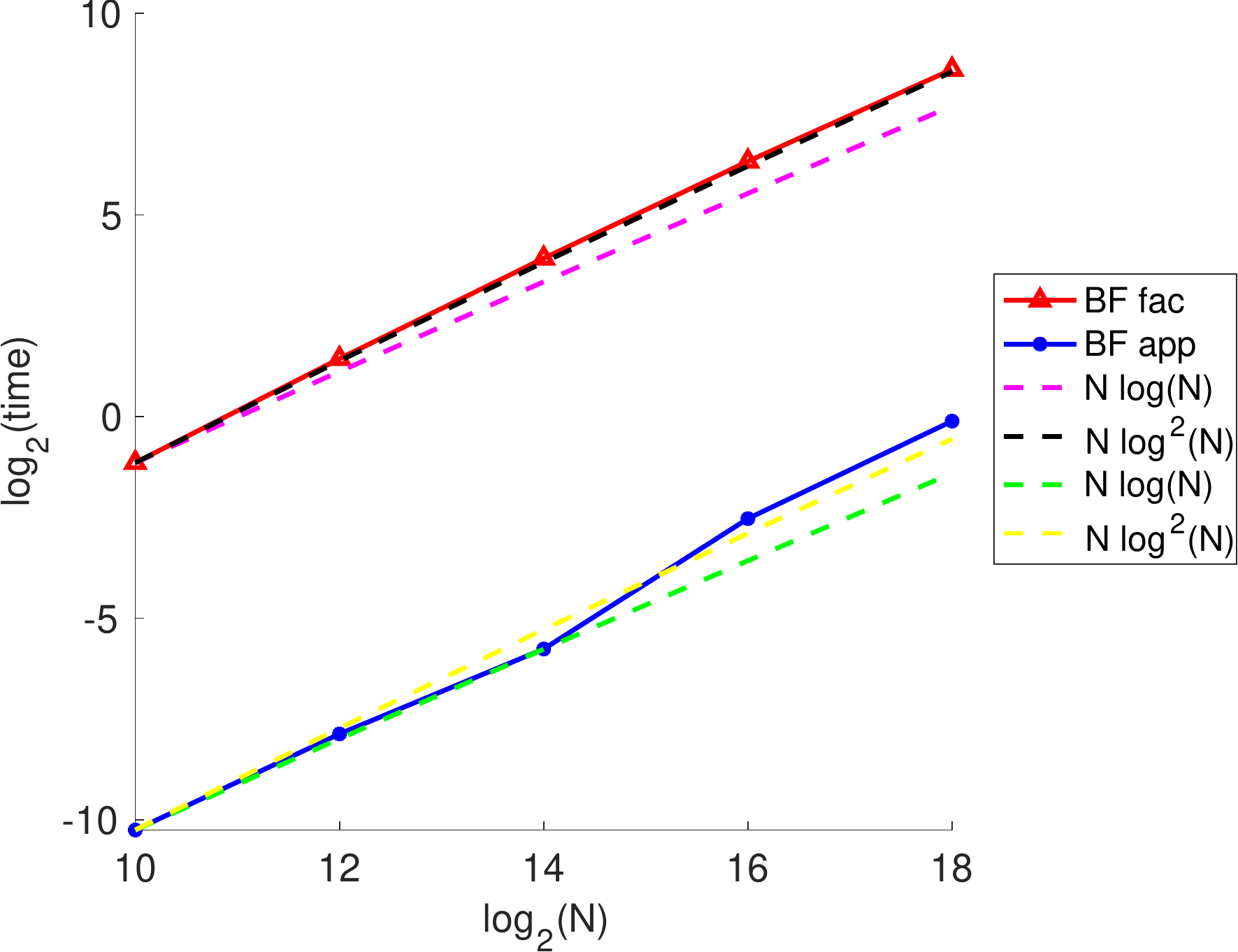}&
      \includegraphics[height=1.7in]{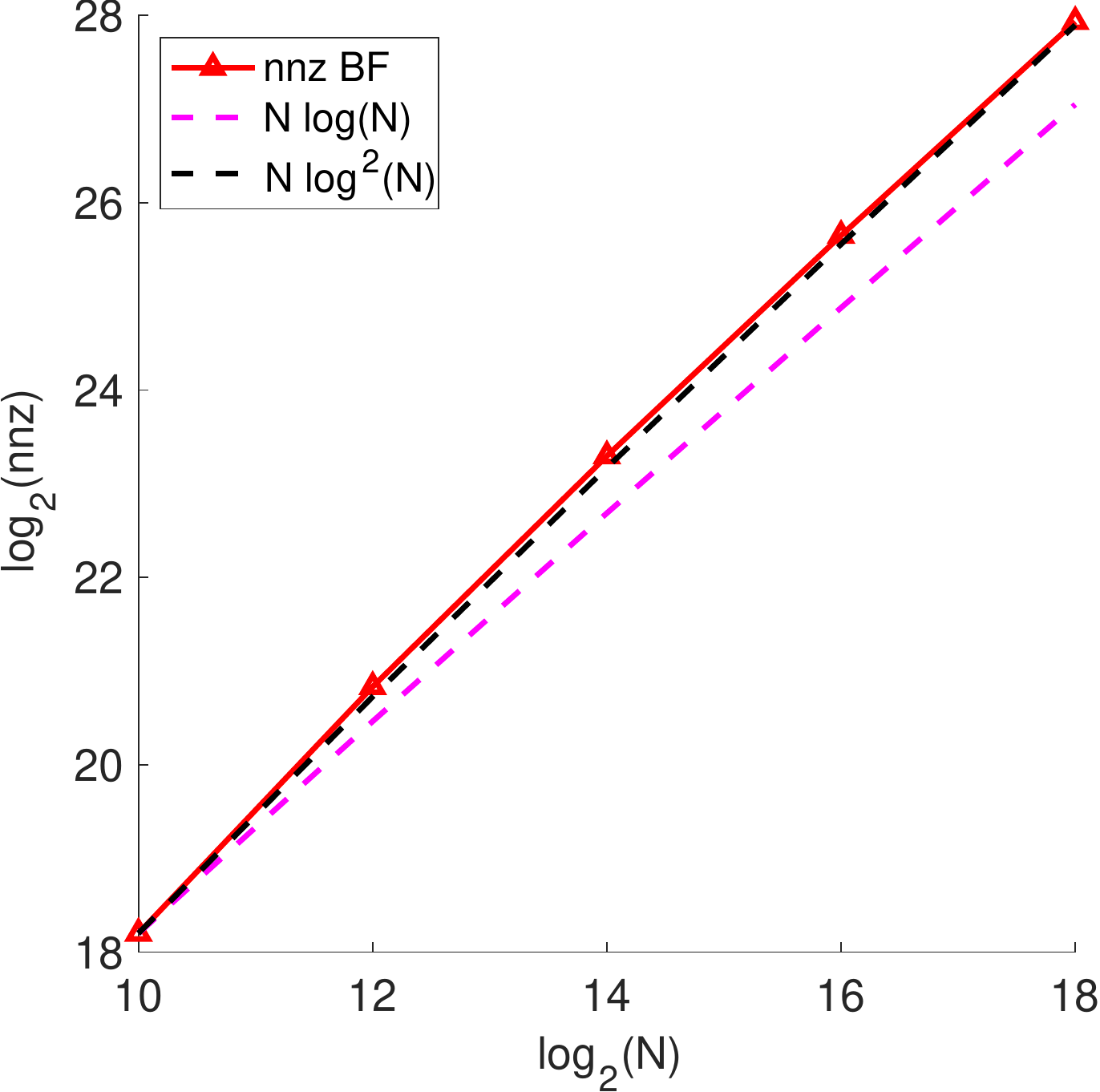}&
      \includegraphics[height=1.7in]{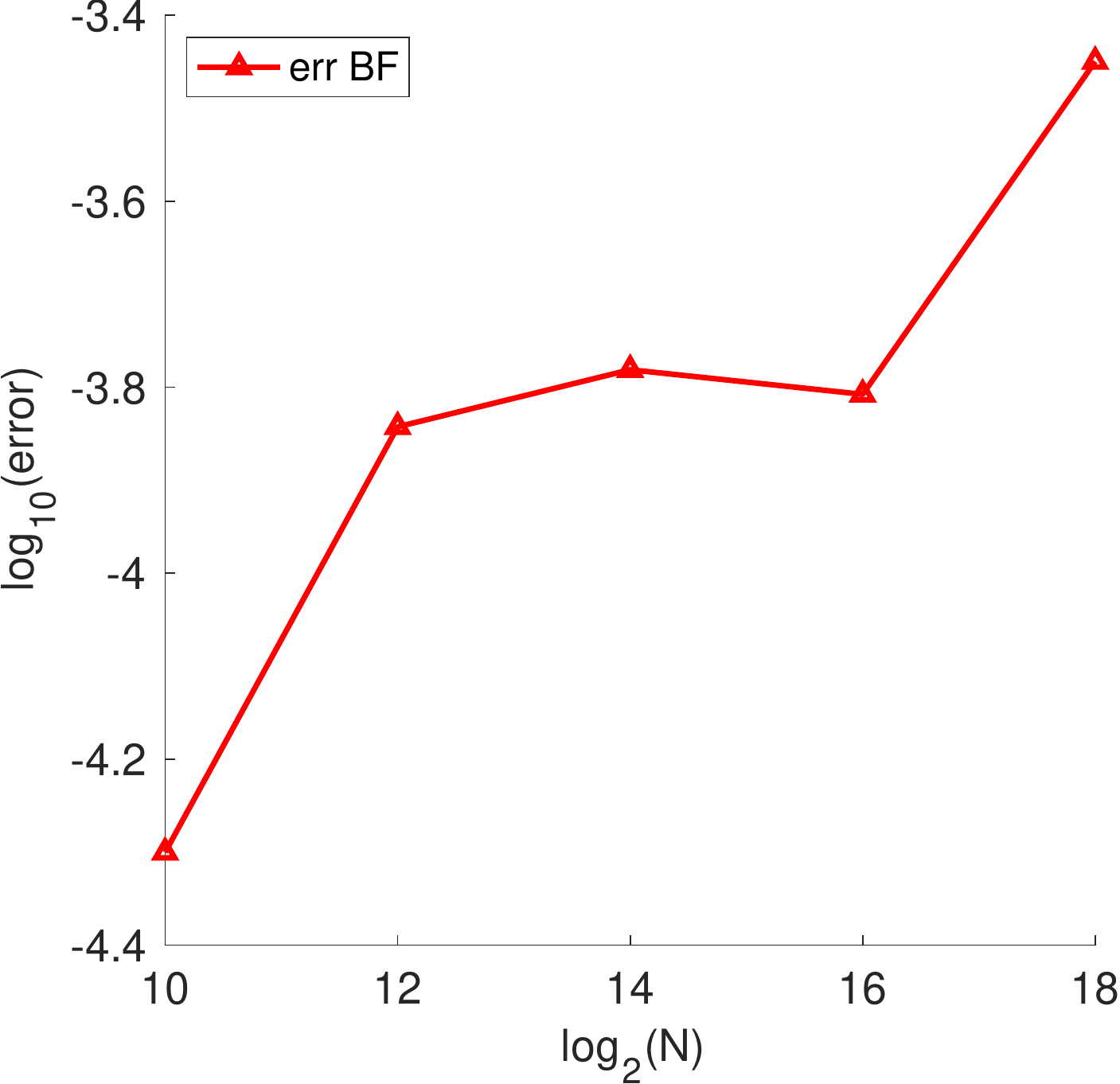}\\   
      \includegraphics[height=1.7in]{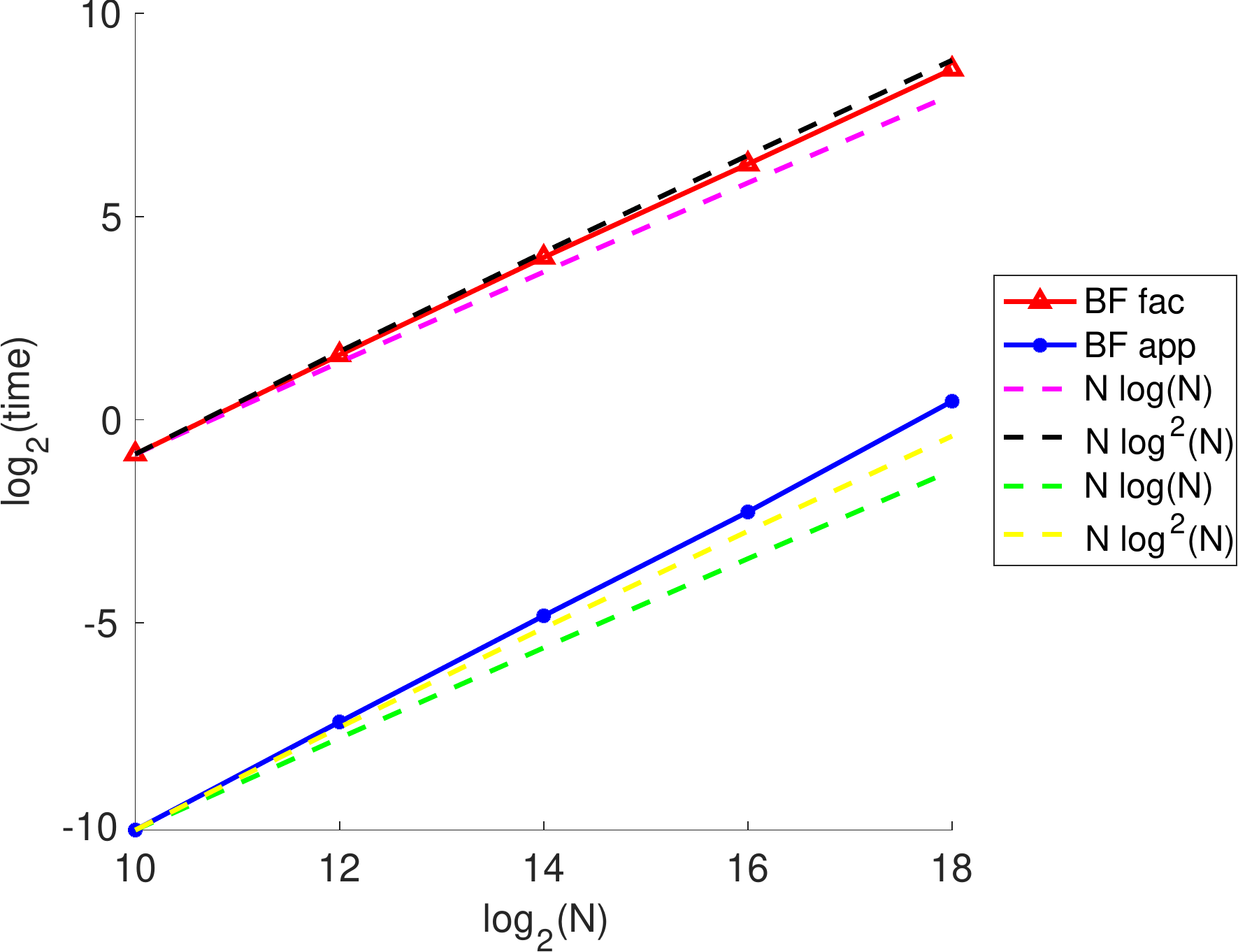}&
      \includegraphics[height=1.7in]{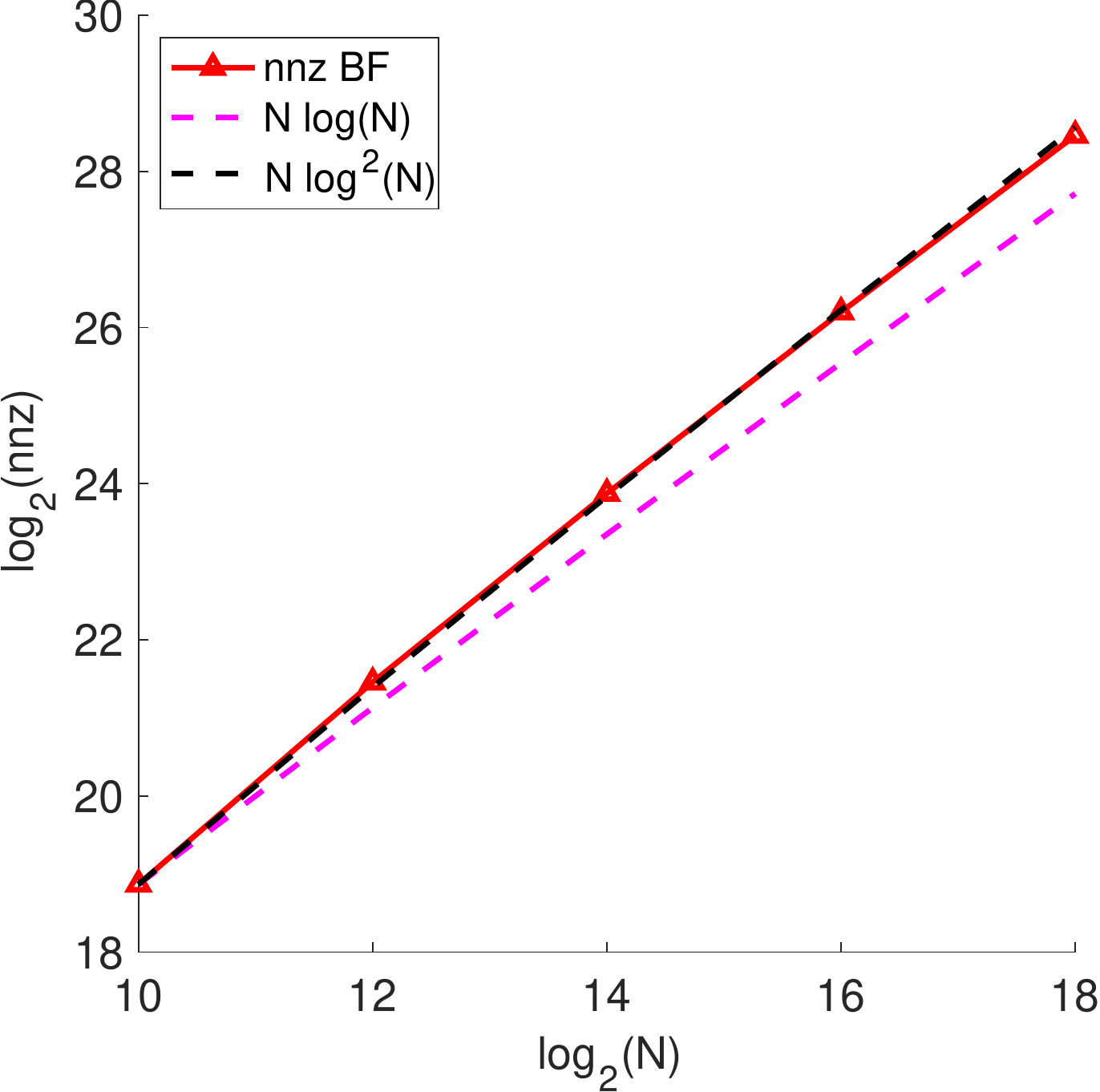}&
      \includegraphics[height=1.7in]{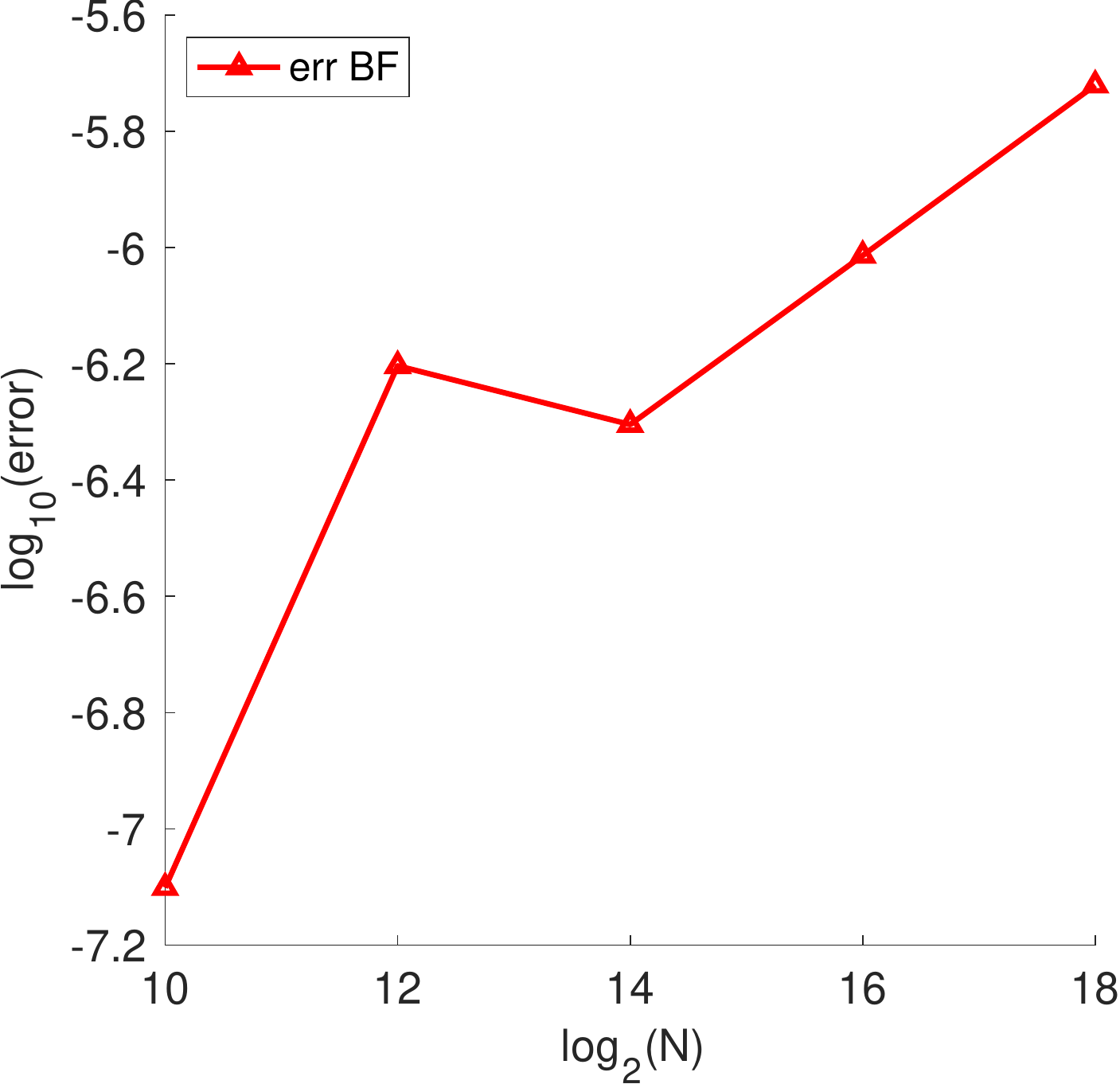}\\   
      \includegraphics[height=1.7in]{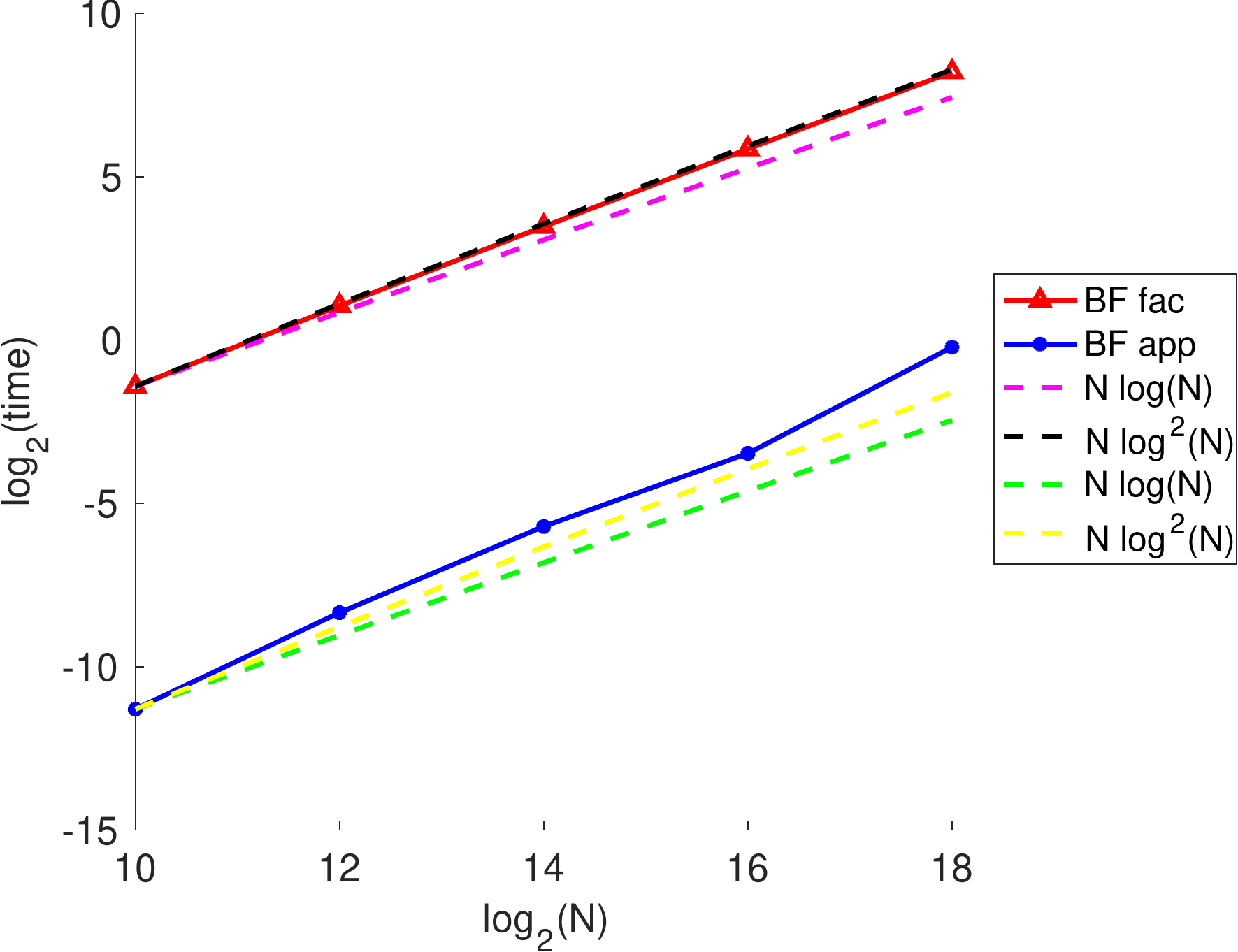}&
      \includegraphics[height=1.7in]{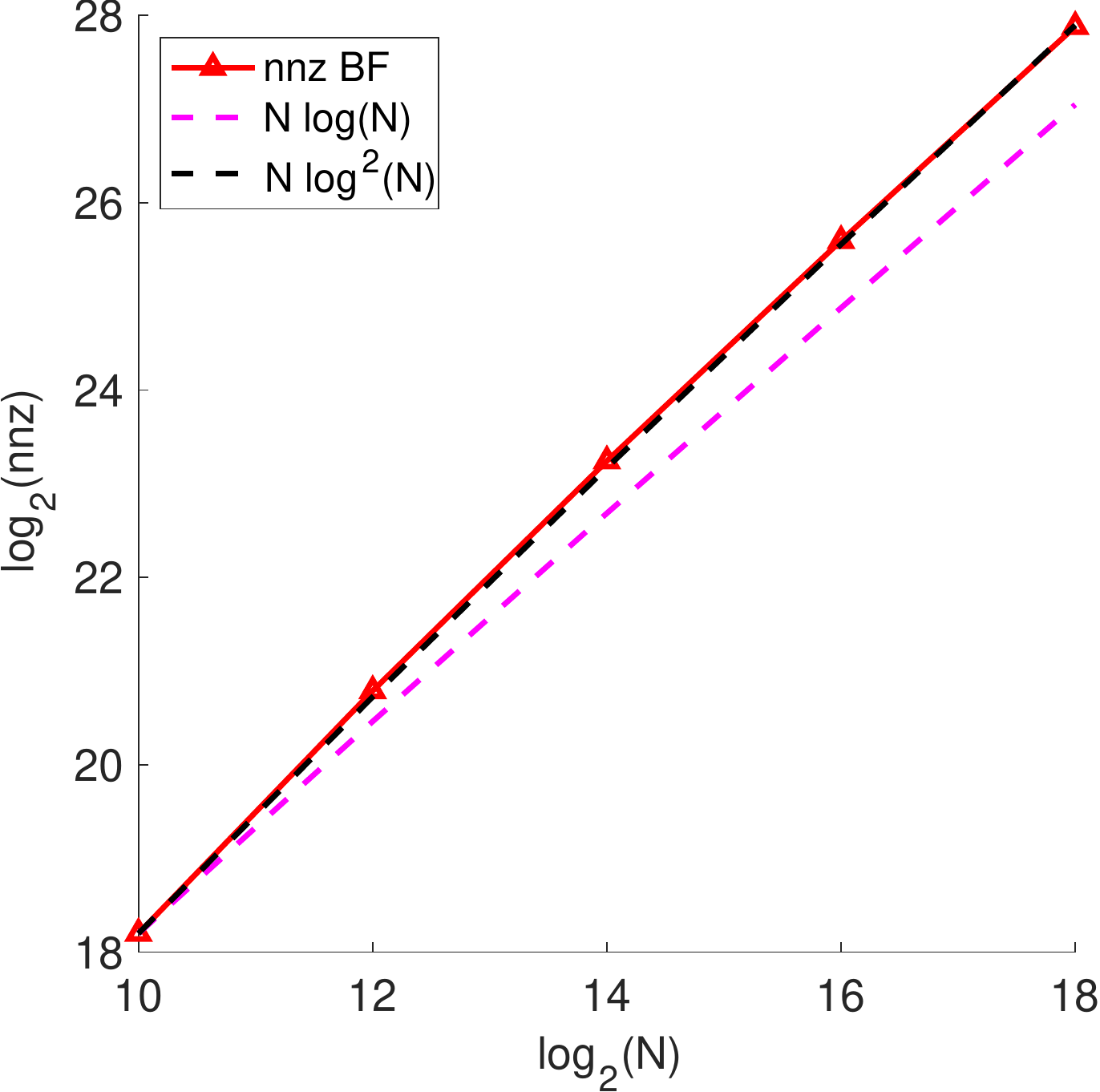}&
      \includegraphics[height=1.7in]{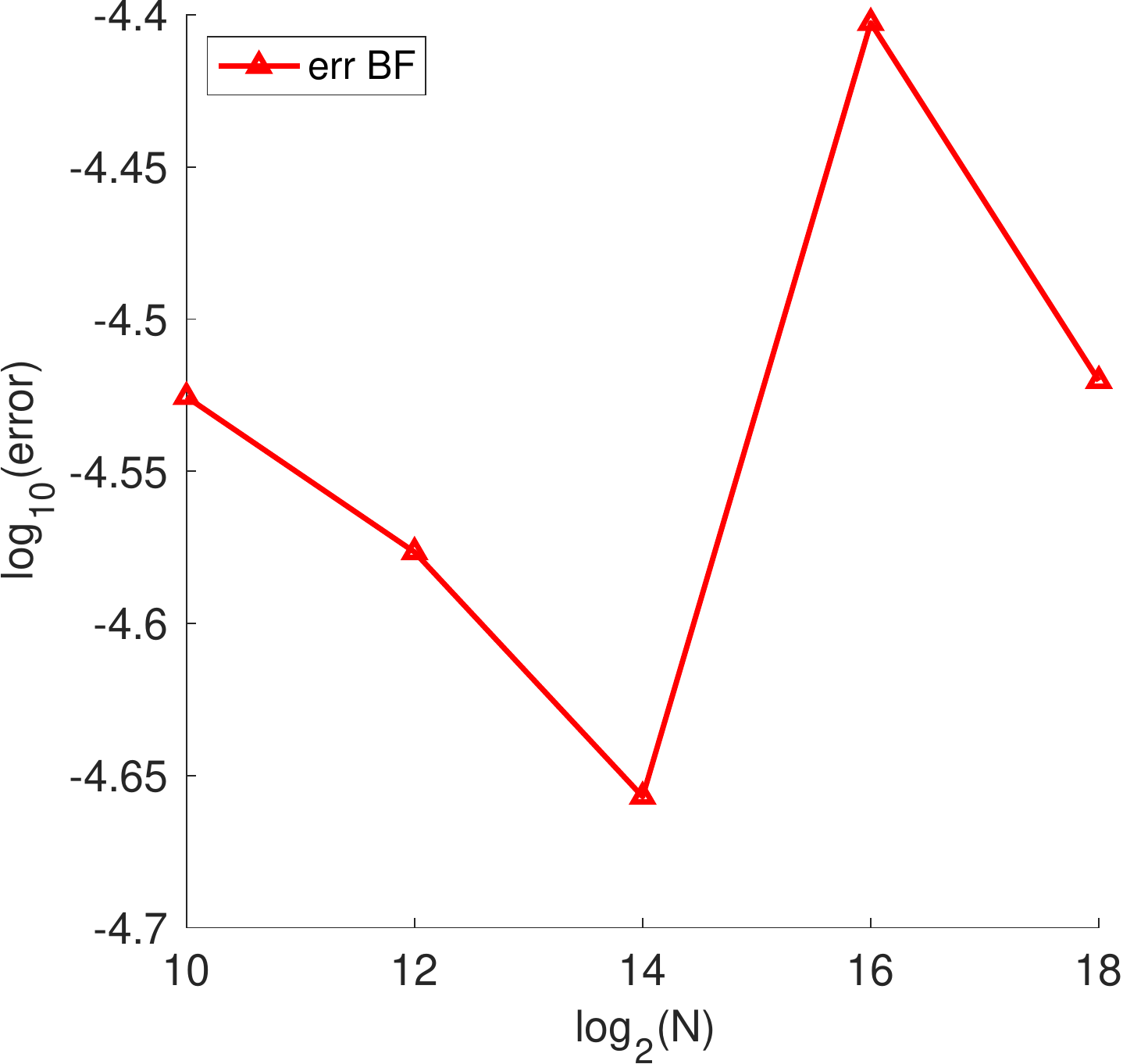}\\   
    \end{tabular}
  \end{center}
\caption{Numerical results for the  Schl{\"o}milch expansions given in \eqref{eqn:S}.
  $N$ is the size of the matrix; $nnz$ is the number of non-zero entries in the butterfly factorization, $err$ is the approximation error of the IDBF matvec. Top row: $(\epsilon,k)=(10^{-6},\min\{m,n\})$. Middle row: $(\epsilon,k)=(10^{-15},30)$. Bottom row: $(\epsilon,k)=(10^{-6},30)$. }
\label{fig:S}
\end{figure}

\paragraph{Example 3.}
In this example, we consider the {1D} non-uniform Fourier transform as follows:
\begin{equation}\label{eqn:N}
u_k = \sum_{n=1}^N e^{-2\pi \imath x_n \omega_k }g_n,
\end{equation}
for $1\leq k\leq N$, where $x_n$ is randomly selected in $[0,1)$, and $\omega_k$ is randomly selected in $[-\frac{N}{2},\frac{N}{2})$ according to uniform distributions in these intervals. 

Figure~\ref{fig:N} summarizes the results of
this example for different grid sizes $N$ with different parameter pairs $(\epsilon,k)$. Numerical results show that IDBF admits at most $O(N\log^2 (N))$ factorization and application time for the non-uniform Fourier transform. The running time agrees with the scaling of the number of non-zero entries required in the data-sparse representation. In fact, when $N$ is large enough, the number of non-zero entries in the IDBF tends to scale as $O(N\log N)$, which means that the numerical scaling can approach to $O(N \log N)$ in both factorization and application when $N$ is large enough.

\begin{figure}[ht!]
  \begin{center}
    \begin{tabular}{ccc}
      \includegraphics[height=1.7in]{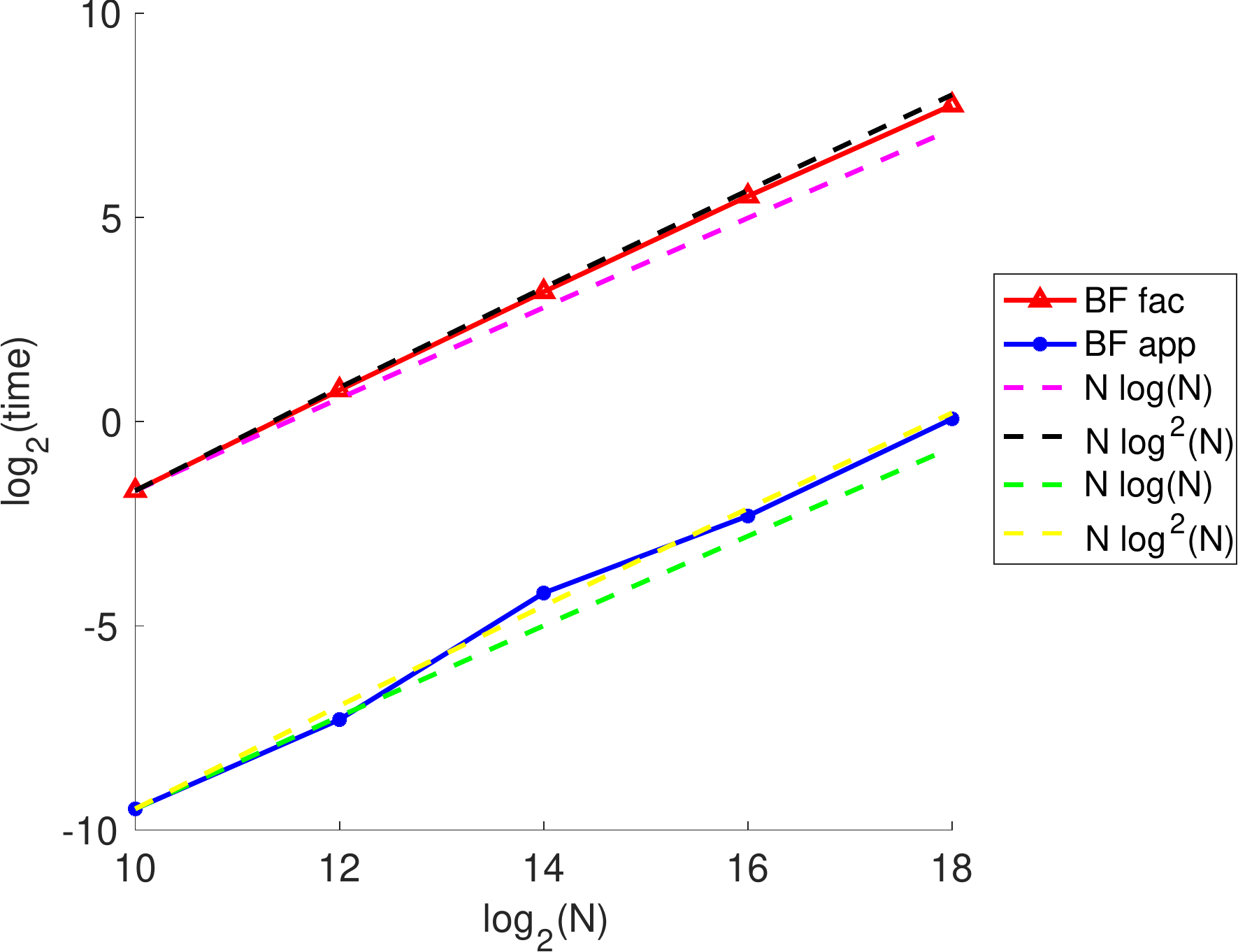}&
      \includegraphics[height=1.7in]{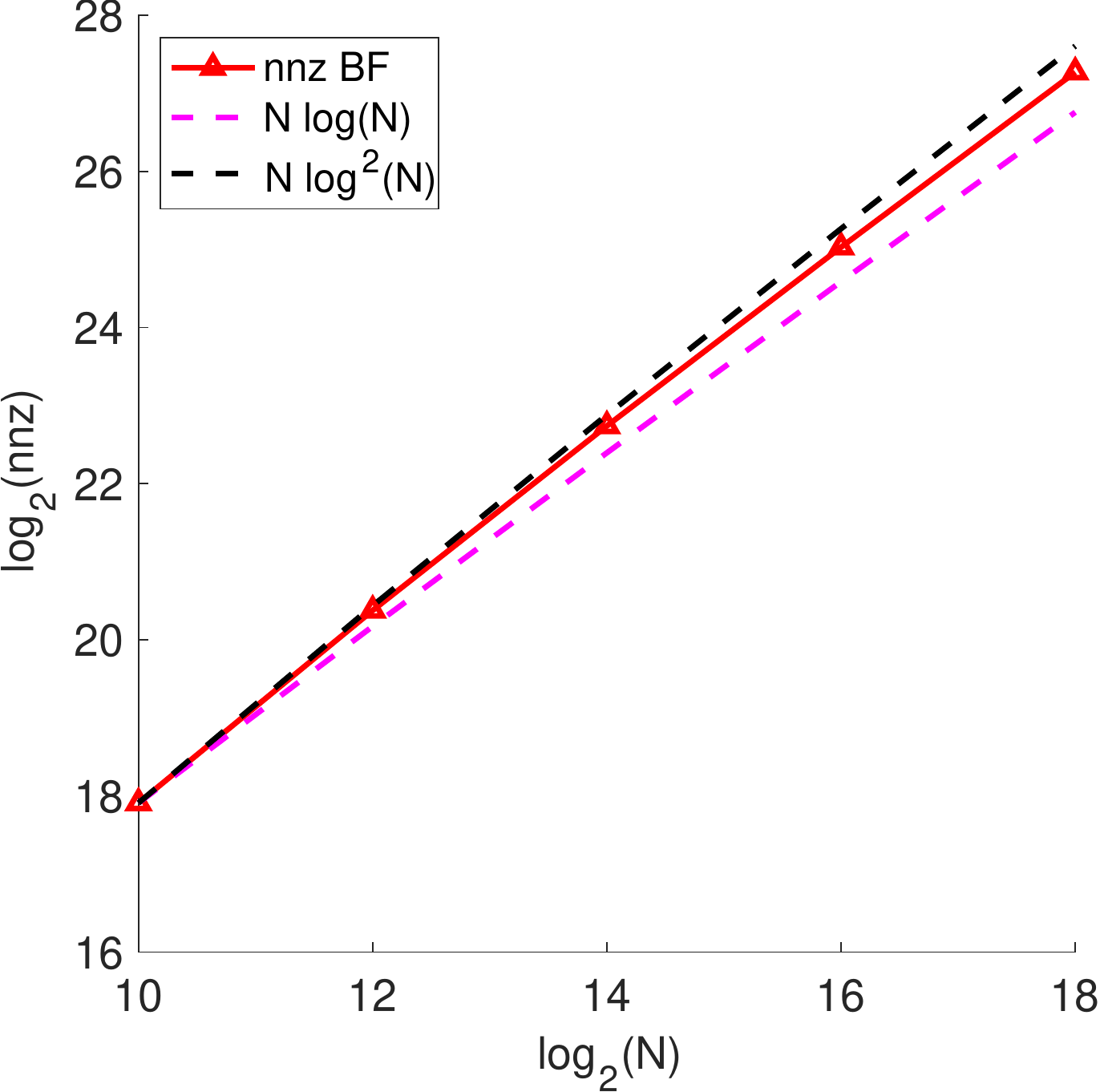}&
      \includegraphics[height=1.7in]{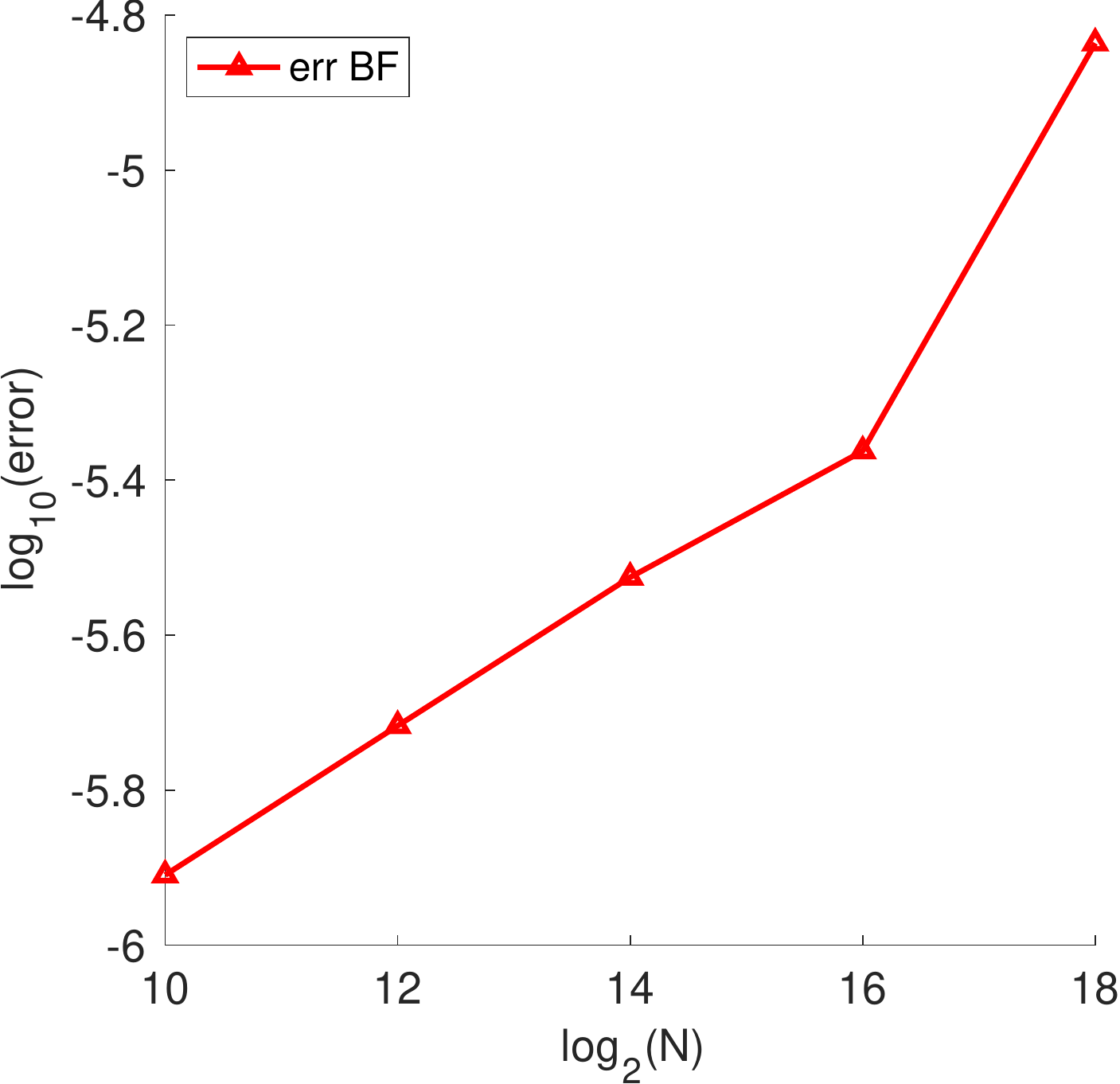}\\   
      \includegraphics[height=1.7in]{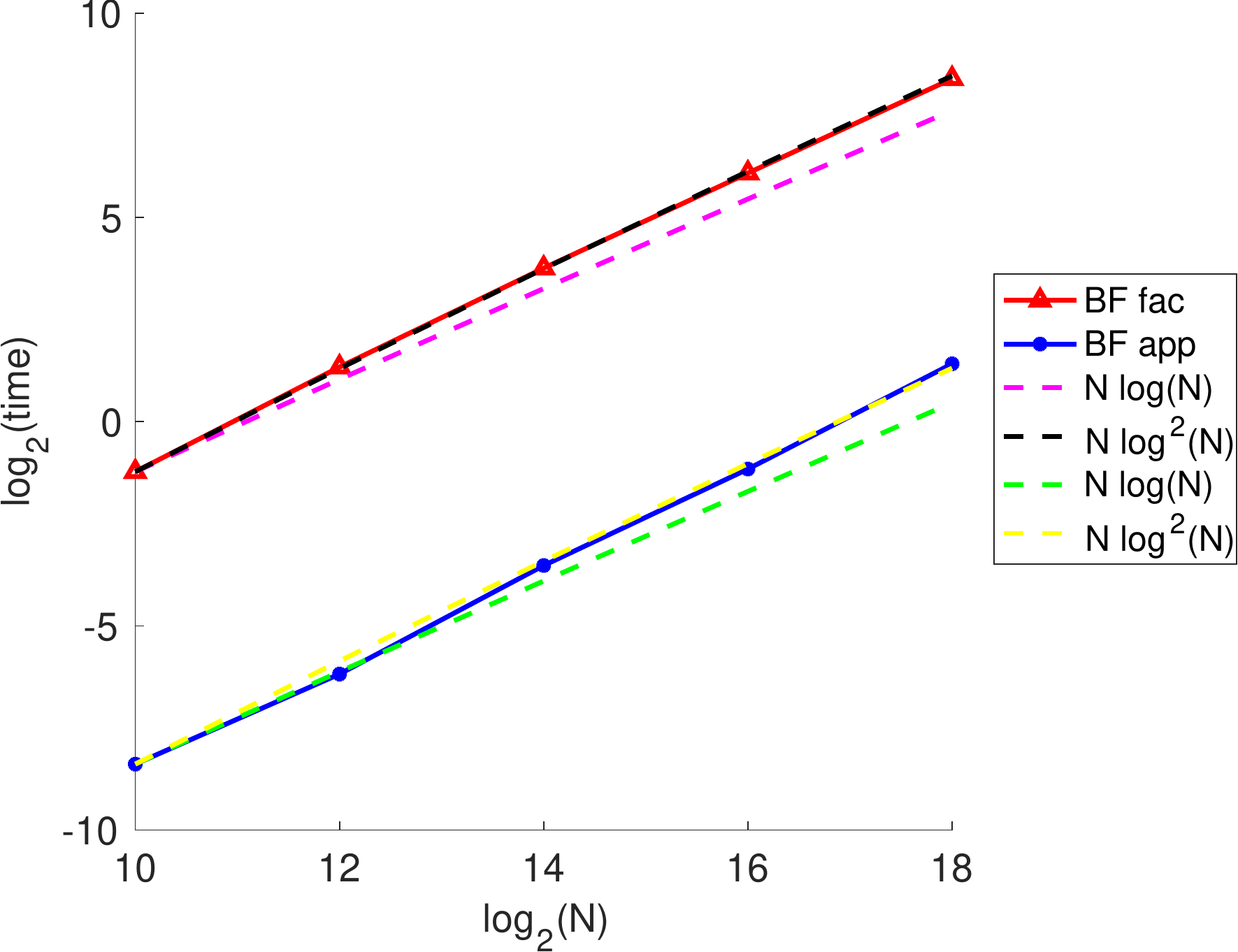}&
      \includegraphics[height=1.7in]{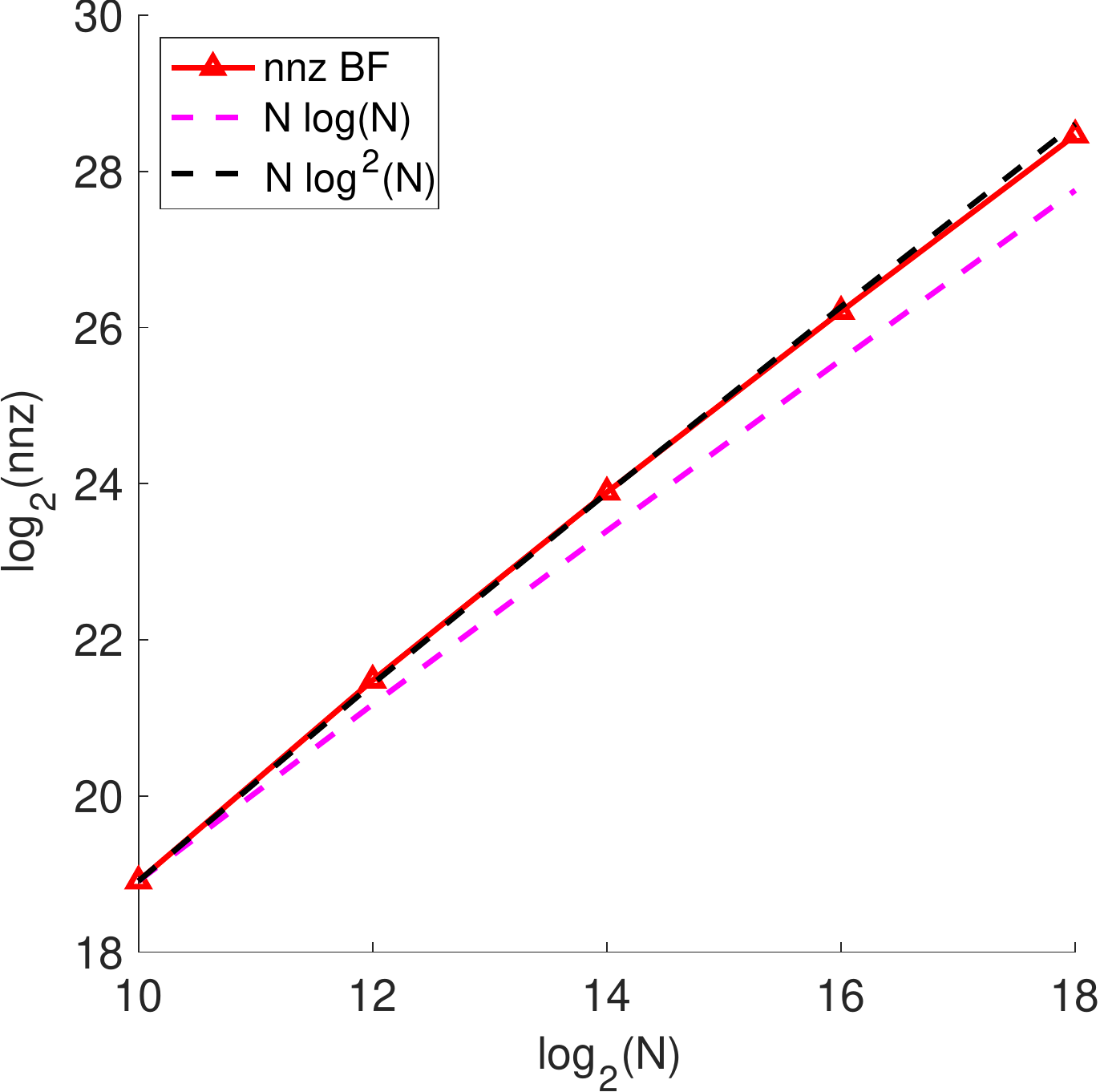}&
      \includegraphics[height=1.7in]{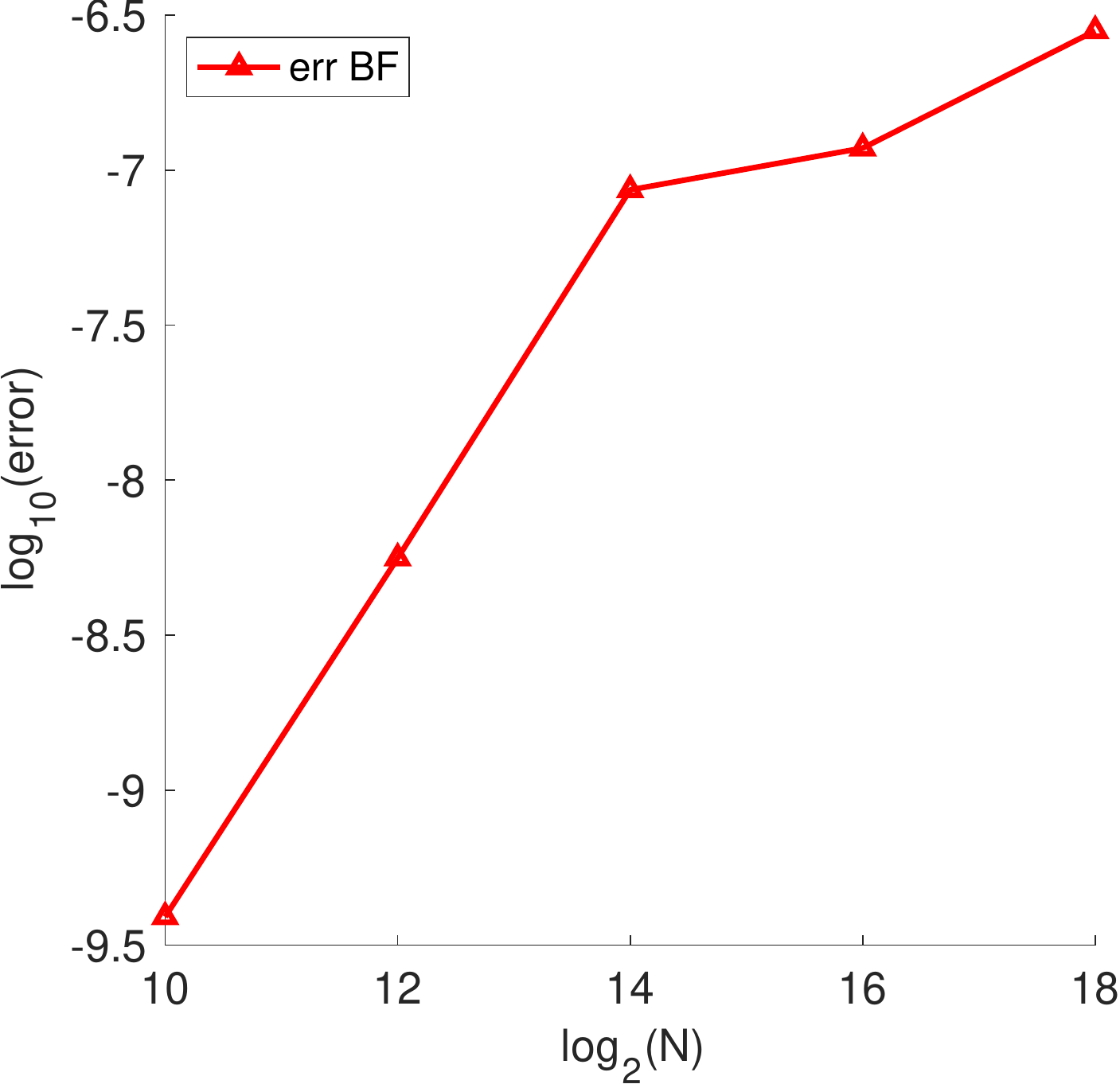}\\   
      \includegraphics[height=1.7in]{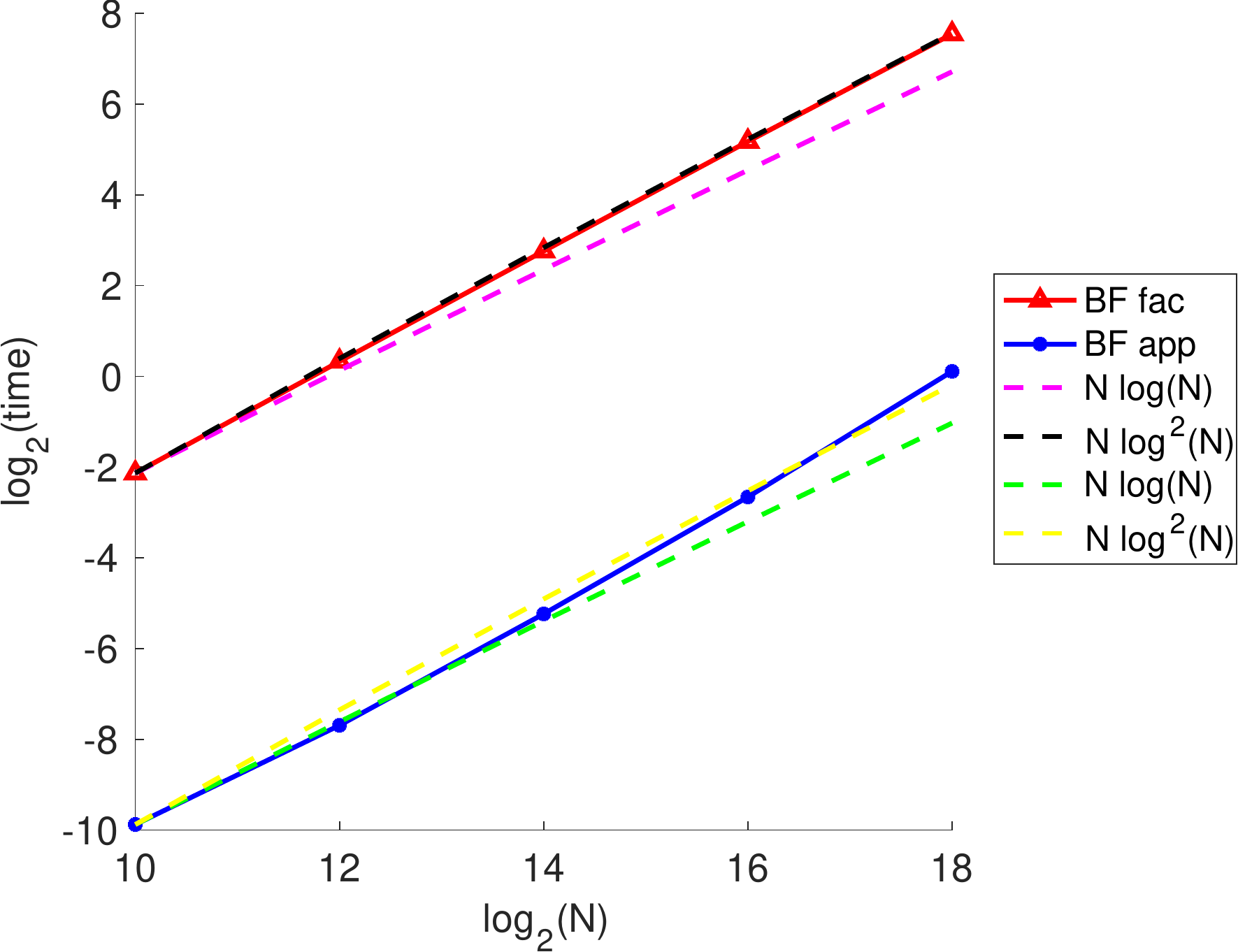}&
      \includegraphics[height=1.7in]{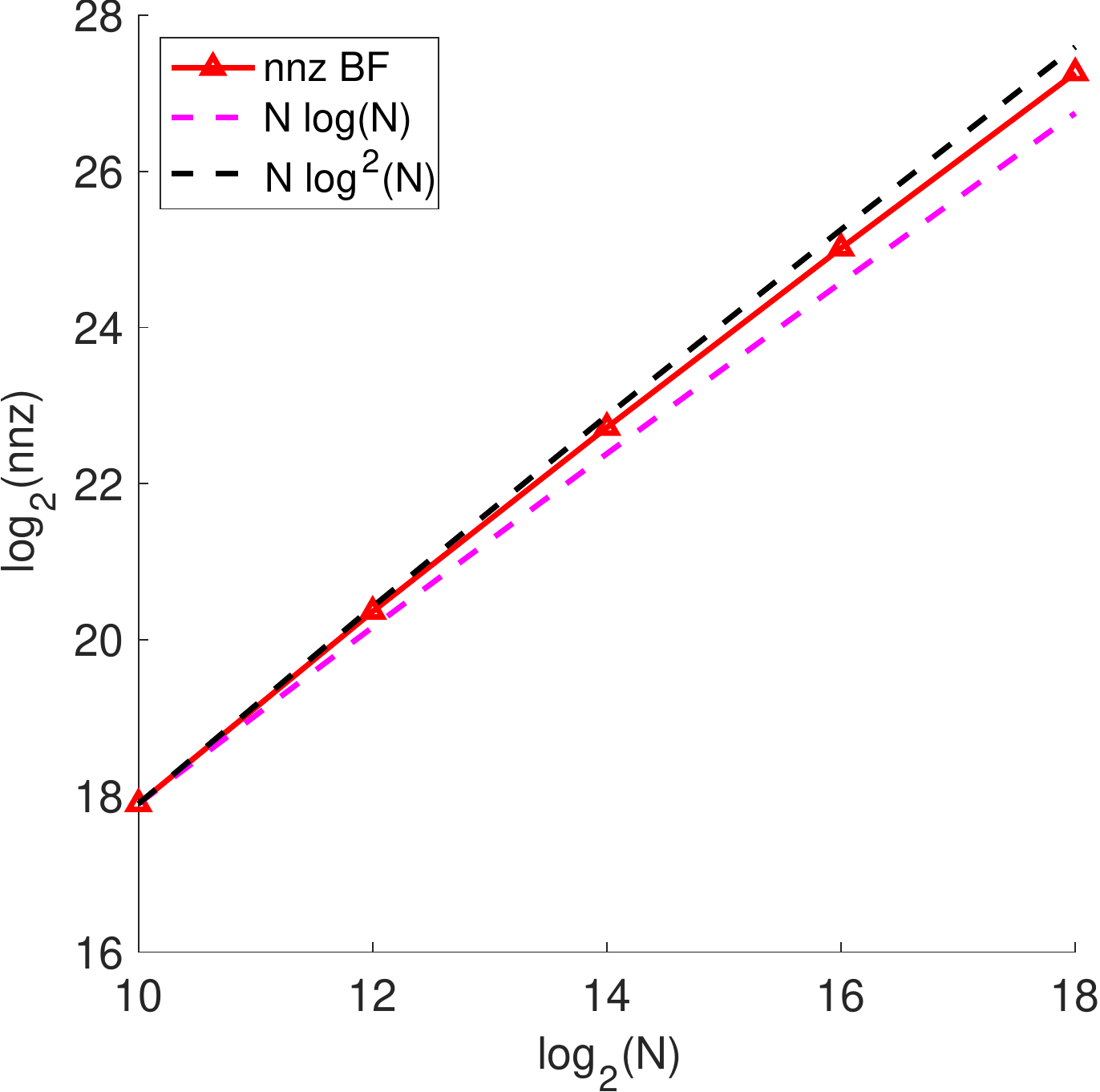}&
      \includegraphics[height=1.7in]{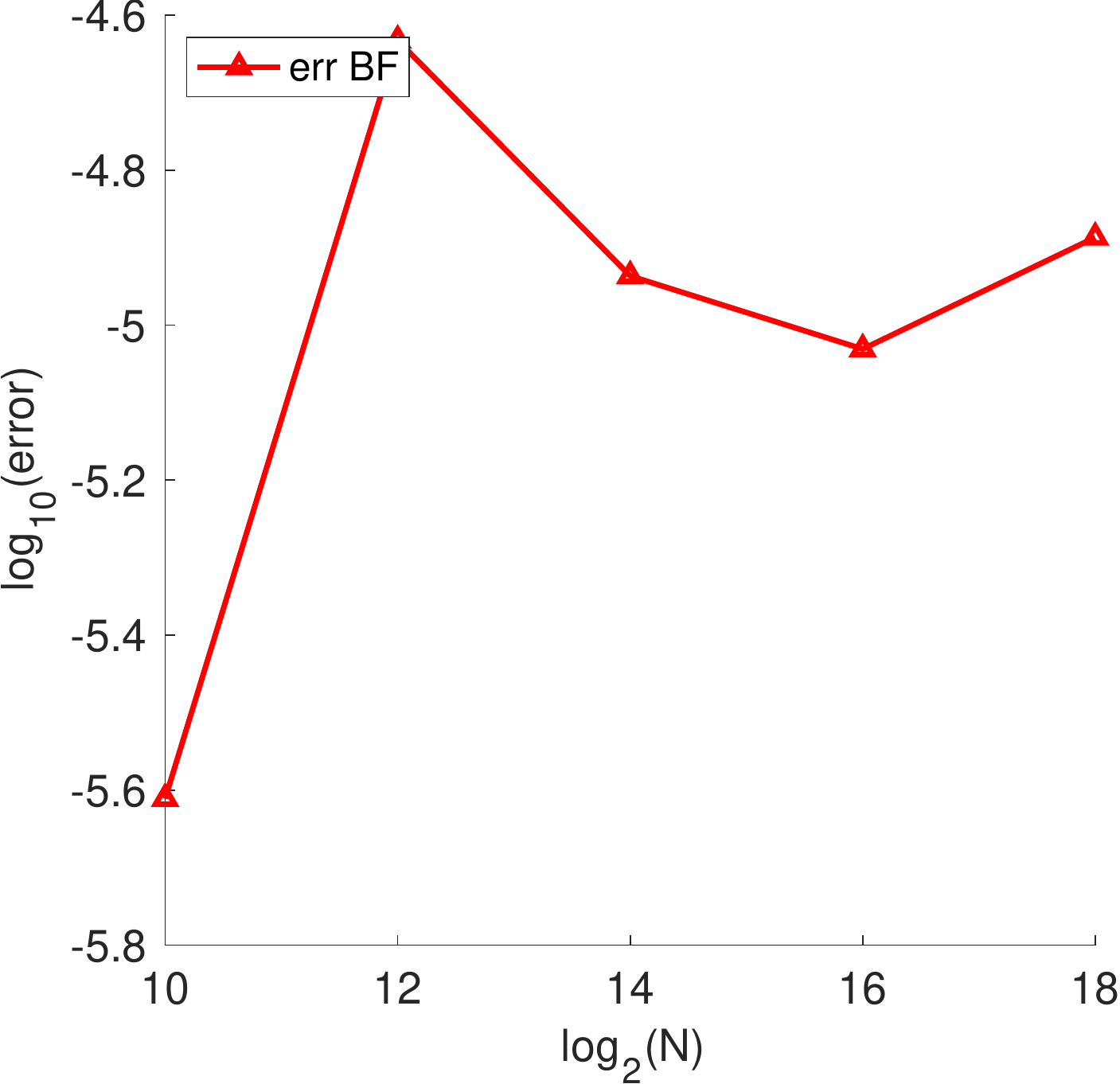}\\    
    \end{tabular}
  \end{center}
\caption{Numerical results for the NUFFT given in \eqref{eqn:N}.
  $N$ is the size of the matrix; $nnz$ is the number of non-zero entries in the butterfly factorization, $err$ is the approximation error of the IDBF matvec. Top row: $(\epsilon,k)=(10^{-6},\min\{m,n\})$. Middle row: $(\epsilon,k)=(10^{-15},30)$. Bottom row: $(\epsilon,k)=(10^{-6},30)$. }
\label{fig:N}
\end{figure}

\begin{figure}[ht!]
  \begin{center}
    \begin{tabular}{cc}
      \includegraphics[height=1.7in]{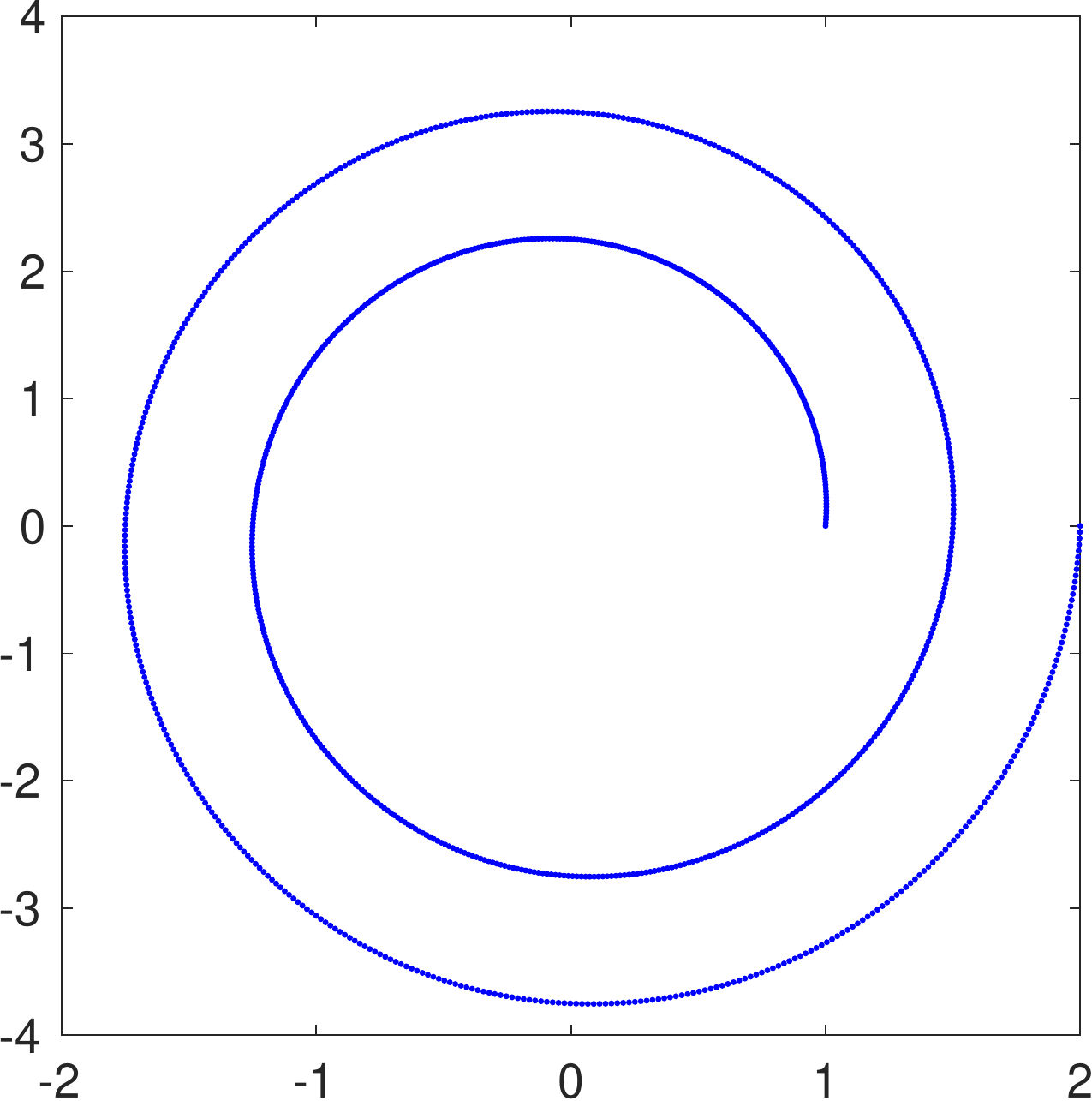}&
      \includegraphics[height=1.7in]{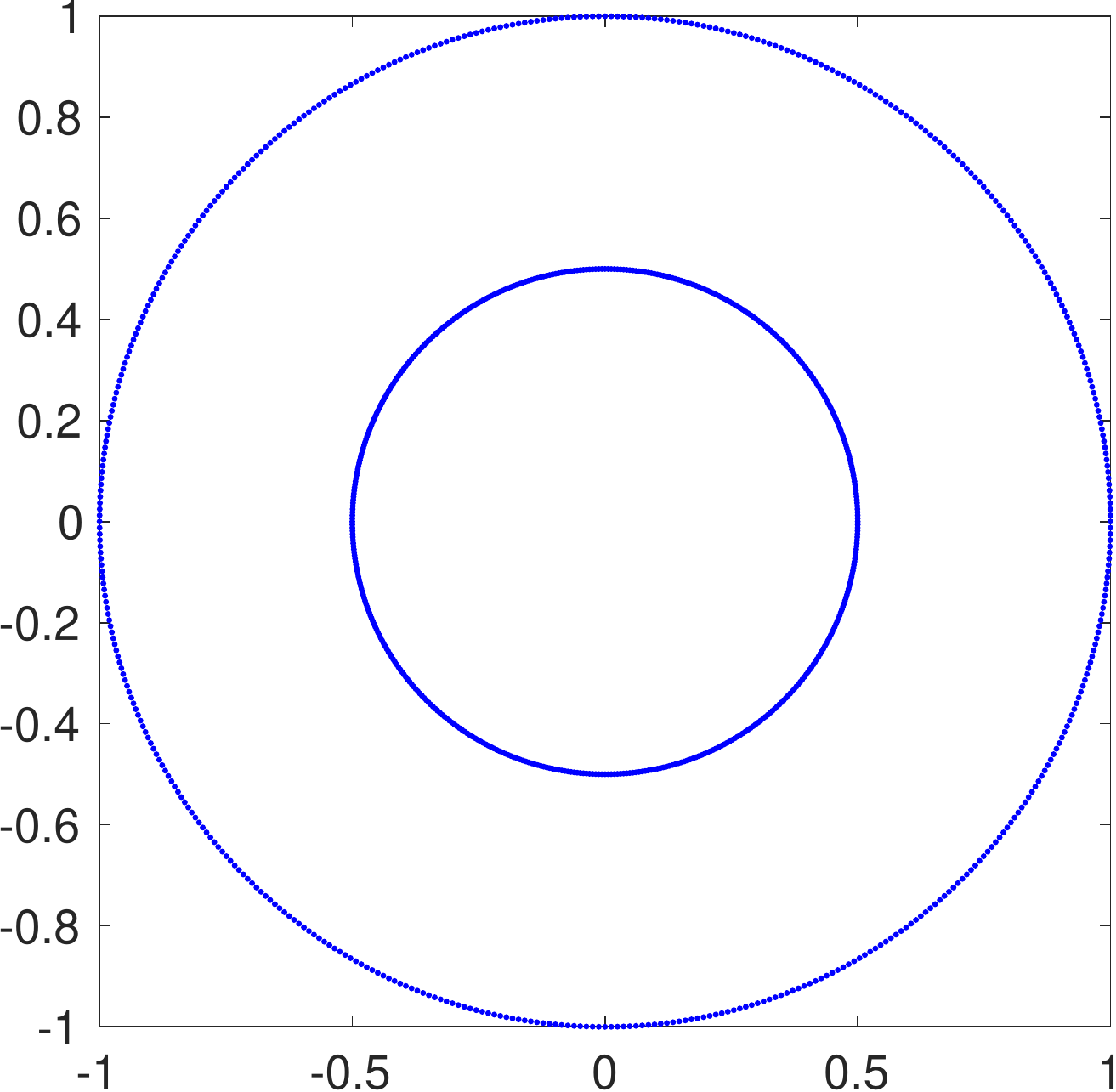}\\
      (a) & (b)
    \end{tabular}
  \end{center}
\caption{The two scatterers used in Example $4$ and $5$: (a) a spiral object; (b) a round object with a hole in center which is the port.}
\label{fig:p}
\end{figure}

\paragraph{Example 4.}
The fourth example is from {the electric field integral equation (EFIE) for analyzing scattering from a two-dimensional curve. Using the method of moments on a linear segmentation of the curve, the EFIE takes the} form \cite{Butterfly1}
\[
Zx=b,
\]
where $Z$ is an impedance matrix with (up to scaling)
\begin{align*}
 Z_{ij} &=
  \begin{cases}
   {w_i w_j} H_0^{(2)}({\kappa}|\rho_i-\rho_j|),        & \text{if } i\neq j, \\
   {w_i^2}\left[ 1-\mathrm{i} \frac{2}{\pi}\ln \left( \frac{\gamma k{w_i}}{4e} \right)  \right],       & \text{otherwise},
  \end{cases}
\end{align*}
where $e\approx 2.718$, $\gamma\approx 1.781$, ${\kappa}=2\pi/\lambda_0$ is the wavenumber, $\lambda_0$ represents the free-space wavelength, $H^{(2)}_0$ denotes the zeroth-order Hankel function of the second kind, {$w_i$} is the length of the $i$-th linear segment of the {scatterer object}, $\rho_i$ is the center of the $i$-th segment.

It was shown in \cite{Butterfly1,HSSBF} that {$Z$} admits a HSS-type complementary low-rank property, i.e., off-diagonal blocks are complementary low-rank matrices. The method in \cite{HSSBF} requires $O(N^{1.5}\log N)$ operations to compress the impedance matrix via a slower version of butterfly factorization. After compression, it requires $O(N\log^2(N))$ operations to apply the impedance matrix and makes it possible to design efficient iterative solvers to solve the linear system for the impedance matrix. Replacing the butterfly factorization in \cite{HSSBF} with IDBF, we reduce the factorization time to $O(N\log^2(N))$ as well.

Figure \ref{fig:Z} shows the results of the fast matvec of the impedance matrix from {a} 2D EFIE generated with a spiral object as shown in Figure \ref{fig:p} (a). We vary the number {of segments} $N$ and let ${\kappa}=O(N)$ in the construction of $Z$. In the IDBF, we use the same truncation rank ${k}=40$ and tolerance $\epsilon=10^{-4}$ in IDs with Mock-Chebyshev points. Numerical results verifies the $O(N\log^2(N))$ scaling for both the factorization and application of the new HSS-type butterfly factorization by IDBF.

\begin{figure}[ht!]
  \begin{center}
    \begin{tabular}{ccc}
      \includegraphics[height=1.7in]{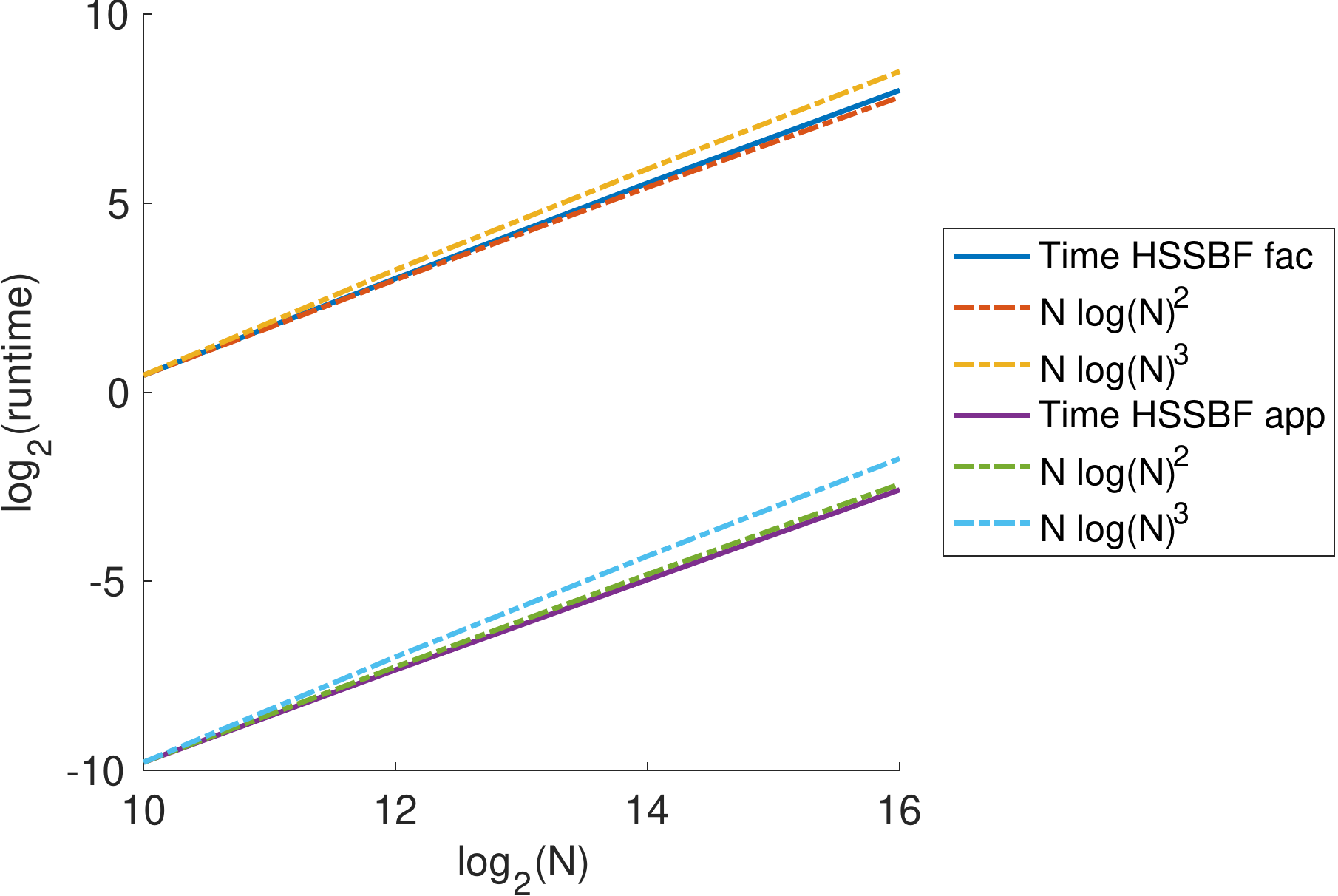}&
      \includegraphics[height=1.7in]{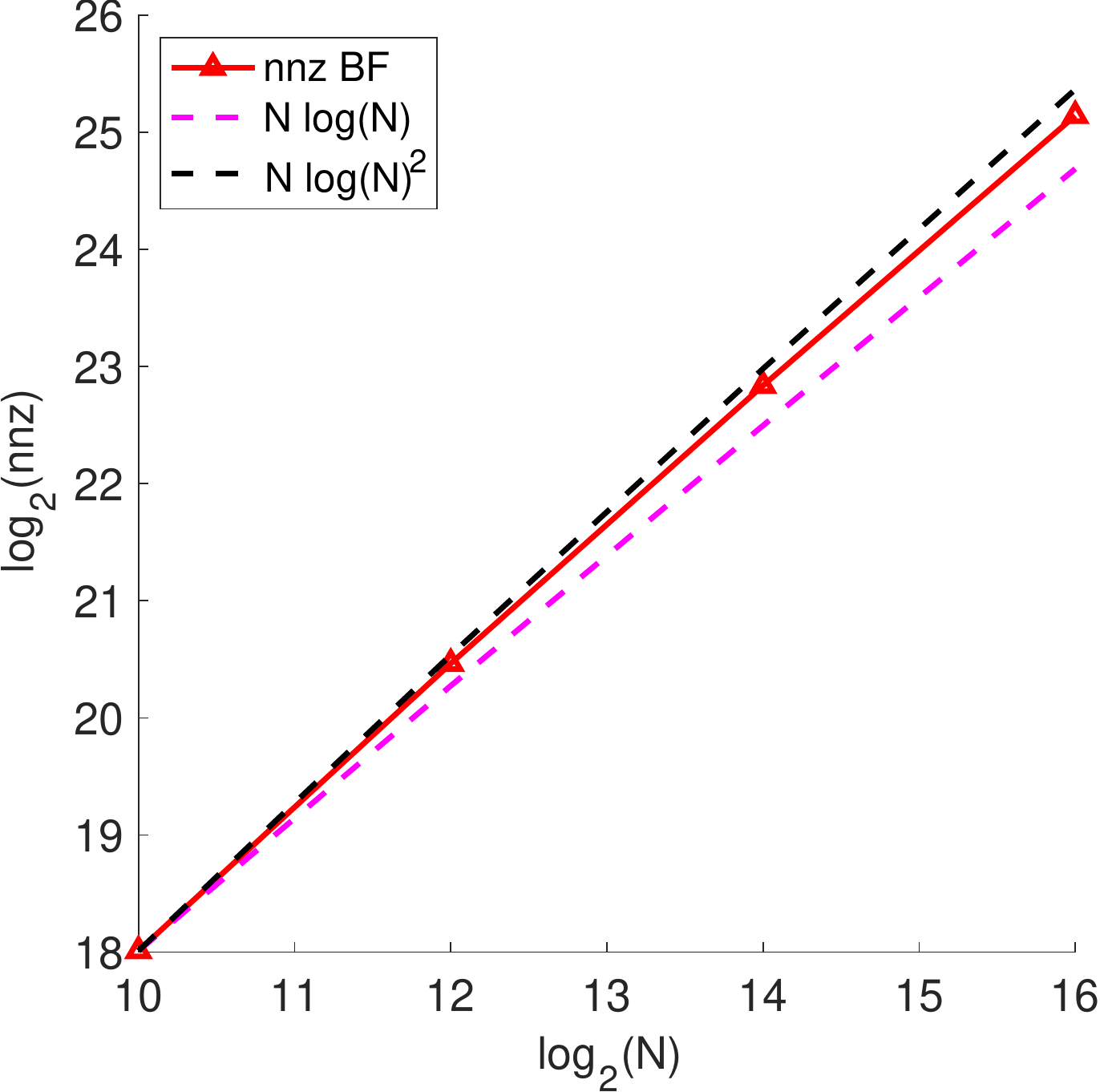}&
      \includegraphics[height=1.7in]{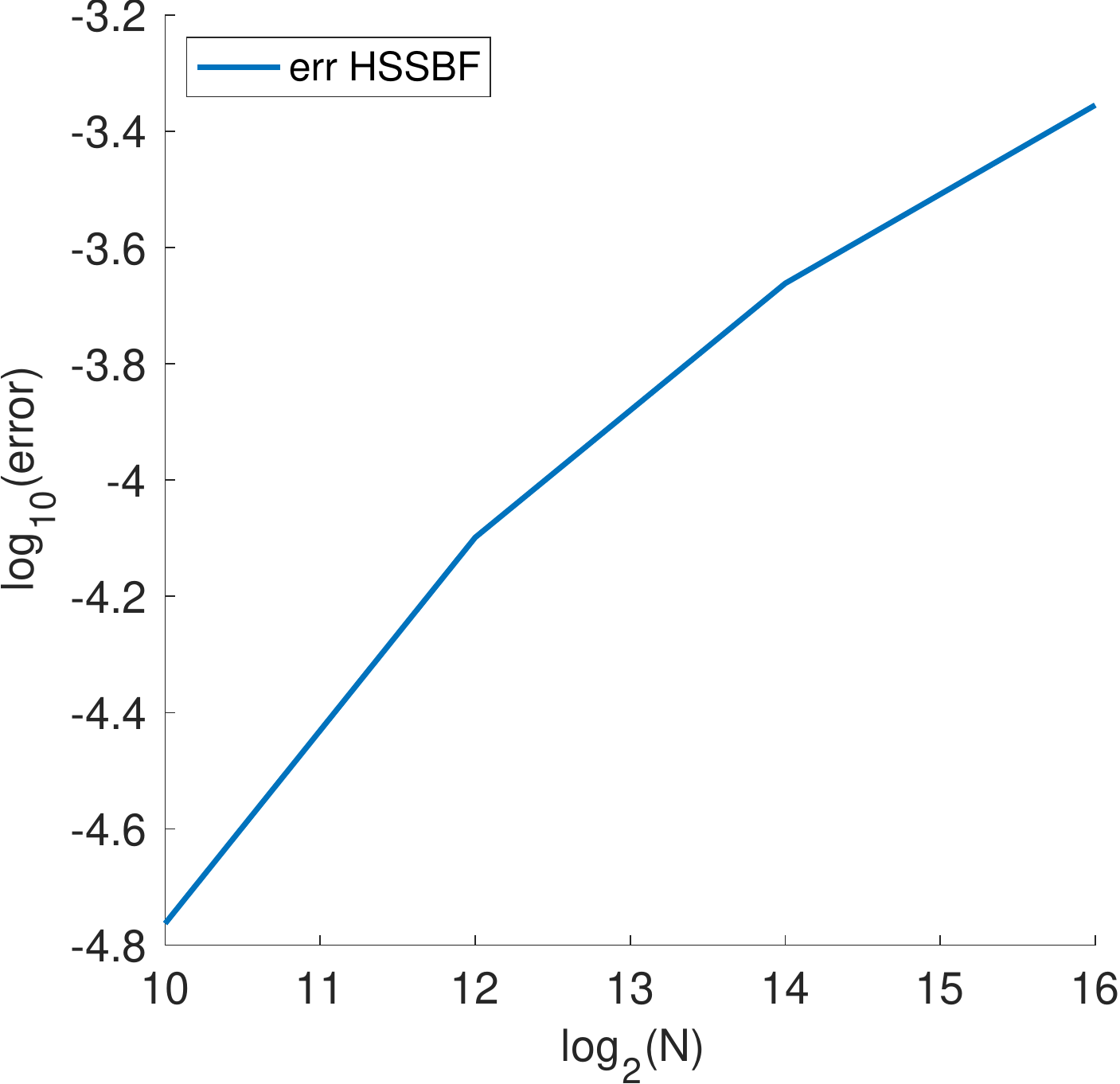}
    \end{tabular}
  \end{center}
\caption{Numerical results for the 2D electric field integral equation.
  $N$ is the size of the matrix; $nnz$ is the number of non-zero entries in the butterfly factorization, $err$ is the approximation error of the matvec by hierarchically applying IDBF.}
\label{fig:Z}
\end{figure}

\paragraph{Example 5.}
The fifth example is from {the} combined field integral {equation (CFIE)}. Similar to the ideas in \cite{Butterfly1,HSSBF} for EFIE, we verify that the impedance matrix of the CFIE\footnote{Codes for generating the impedance matrix are from a MATLAB package ``emsolver" available at \url{https://github.com/dsmi/emsolver}.} by the method of moments for analyzing scattering from {2D} objects also admits a HSS-type complementary low-rank property. Applying the same HSS-type butterfly factorization by IDBF, we obtain $O(N\log^2(N))$ scaling for both the factorization and application time for impedance matrices of CFIEs.  This makes it possible to design efficient iterative solvers to solve the linear system for the impedance matrix. Figure \ref{fig:C} shows the results of the fast matvec of the impedance matrix from {a} 2D CFIE generated with a round object as shown in Figure \ref{fig:p} (b). We vary grid sizes $N$ with the same truncation rank ${k}=40$ and tolerance $\epsilon=10^{-4}$ in IDs with Mock-Chebyshev points. Numerical results {verify} the $O(N\log^2(N))$ scaling for both the factorization and application of the new HSS-type butterfly factorization by IDBF.

\begin{figure}[ht!]
  \begin{center}
    \begin{tabular}{ccc}
      \includegraphics[height=1.7in]{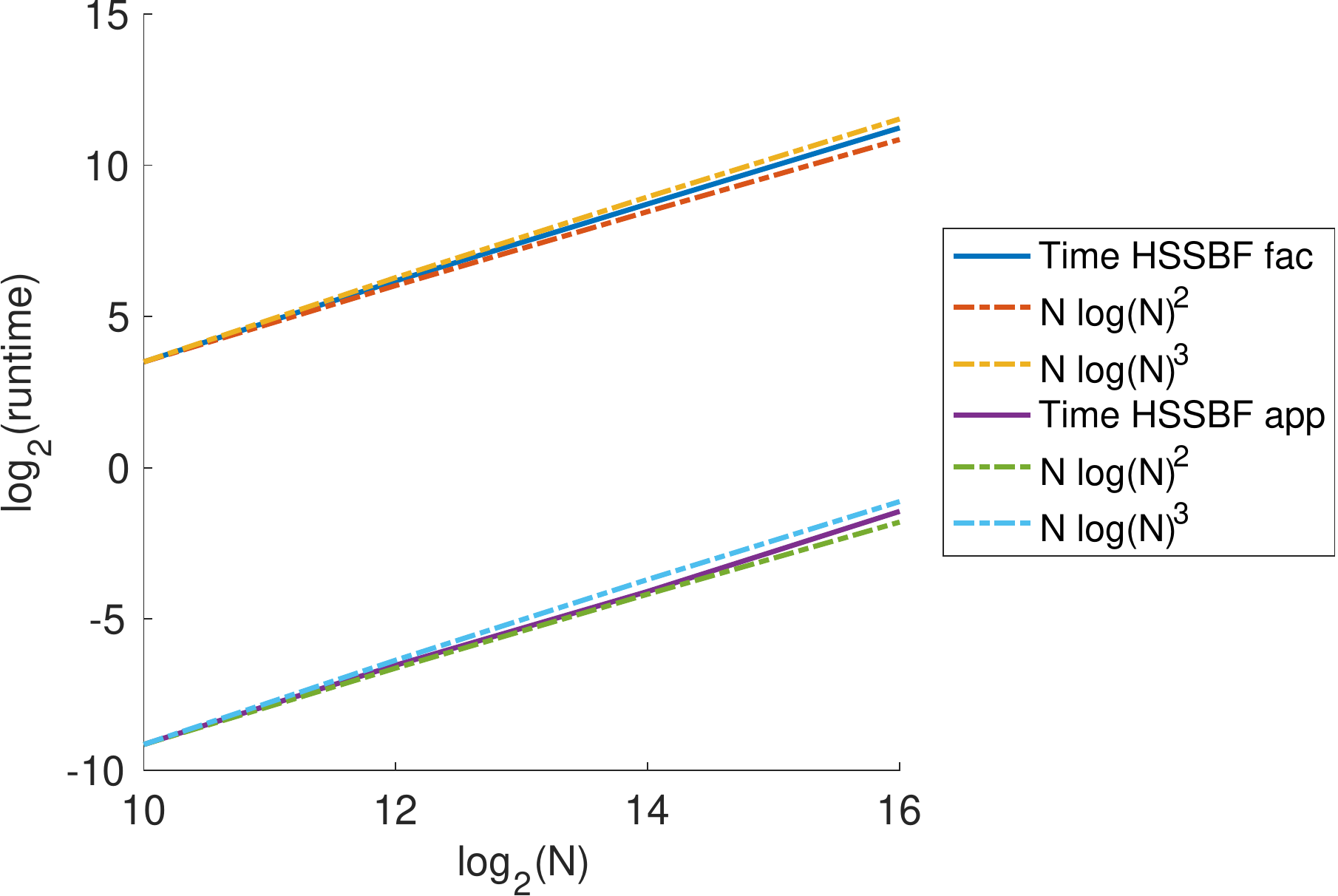}&
      \includegraphics[height=1.7in]{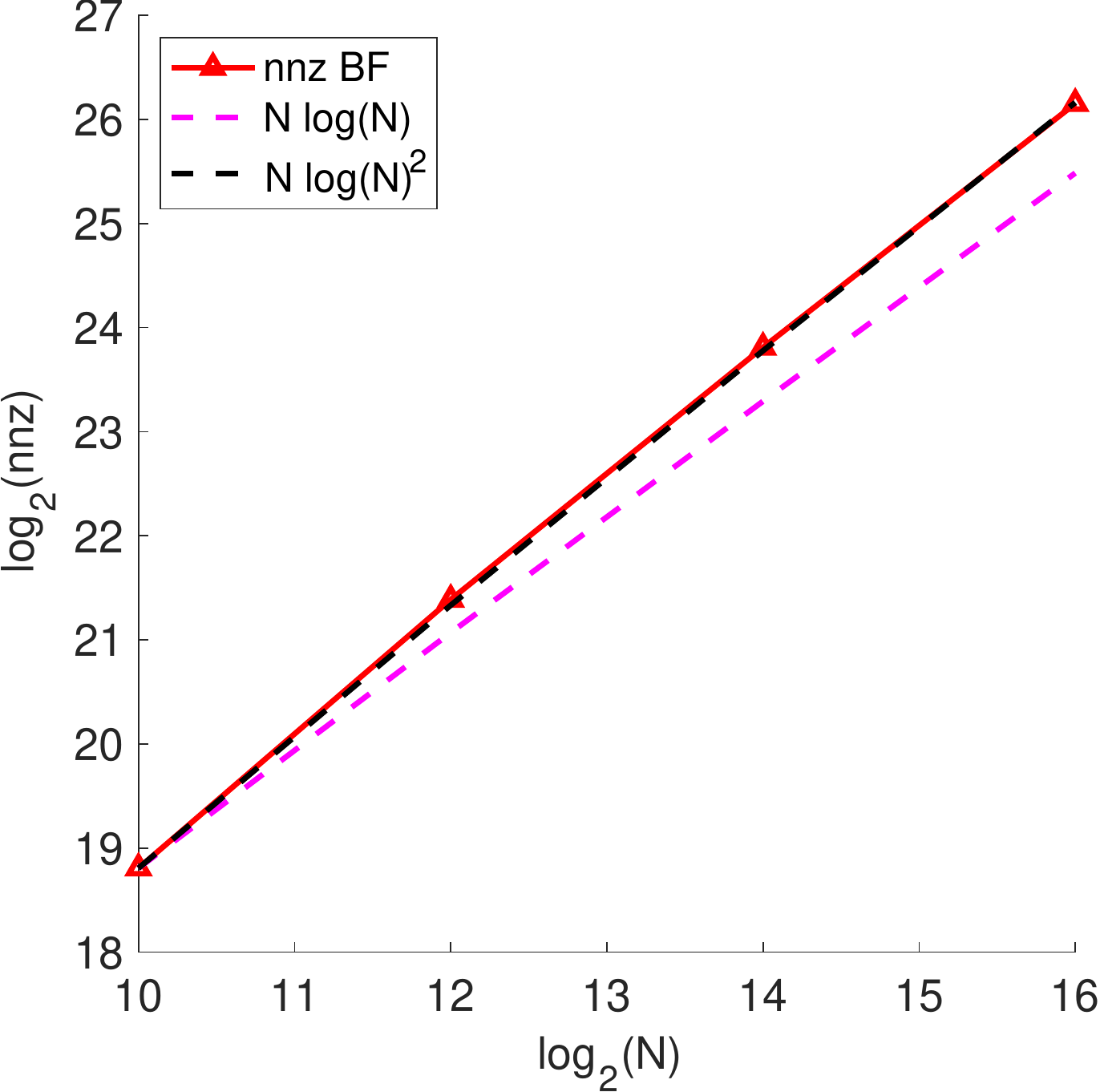}&
      \includegraphics[height=1.7in]{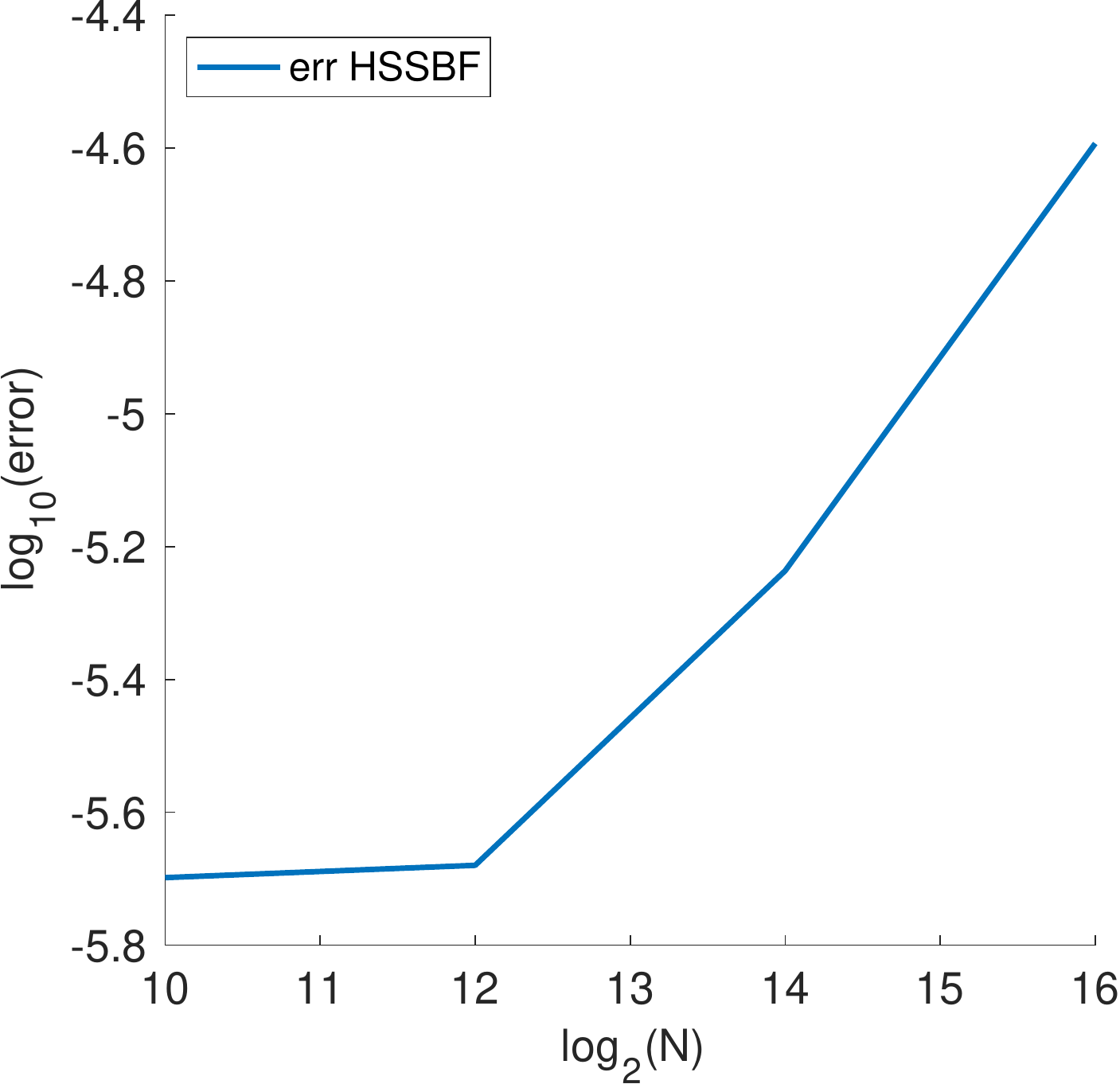}
    \end{tabular}
  \end{center}
\caption{Numerical results for the 2D combined field integral equation.
  $N$ is the size of the matrix; $nnz$ is the number of non-zero entries in the butterfly factorization, $err$ is the approximation error of the matvec by hierarchically applying IDBF.}
\label{fig:C}
\end{figure}

\section{Conclusion and discussion}
\label{sec:conclusion}
This paper introduces an interpolative decomposition butterfly factorization as a data-sparse
approximation of complementary low-rank matrices. It
represents such an $N\times N$ dense matrix as a product of $\O(\log
N)$ sparse matrices. The factorization and application time, and the memory of IDBF all scale as $\O(N\log N)$. The order of {factorization} is from the leaf-root and root-leaf levels of matrix partitioning (e.g., the left and right panels in Figure \ref{fig:complement}) and moves towards the middle level of matrix partitioning (e.g., the middle panel of Figure \ref{fig:complement}). Other orders of factorization are also possible, e.g., an order from the root of the column space to its leaves, an order from the root of the row space to its leaves, or an order from the middle level towards two sides. We leave the extensions of these $O(N\log N)$ IDBFs to the reader.

As shown by numerical examples, IDBF is able to reduce the construction time of the data-sparse representation of the HSS-type complementary matrix in \cite{HSSBF} from $N^{1.5}$ to nearly linear scaling. These matrices arise widely in 2D high-frequency integral equation methods. 

IDBF can also accelerate the factorization time of the hierarchical 
complementary matrix in \cite{LUBF} to nearly linear scaling for 3D 
high-frequency boundary integral methods. After factorization, the application time of matrices in these two integral methods is nearly linear scaling. 
We leave the trivial extension to 3D high-frequency integral methods 
to the reader. 

{\bf Acknowledgments.} H. Yang thanks the support of the start-up grant by the Department of Mathematics at the National University of Singapore.

\bibliographystyle{unsrt} 
\bibliography{ref}

\end{document}